\newcommand{\mathset}[1]{{\left\{#1\right\}}}
\newcommand{\absolute}[1]{\left\lvert#1\right\rvert}
\newcommand{\norm}[1]{\left\|#1\right\|}
\newtheorem{thm}{Theorem}[section]
\newtheorem{remark}[thm]{Remark}
\newtheorem{definition}[thm]{Definition}
\newtheorem{example}[thm]{Example}
\newtheorem{Proposition}[thm]{Proposition}
\newtheorem*{Satz*}{Satz}
\newtheorem{Lemma}[thm]{Lemma}
\newtheorem{proposition}[thm]{Proposition}
\newtheorem{Corollary}[thm]{Corollary}
\newtheorem{Assumption}{Assumption}
\DeclareMathOperator{\abelian}{ab}
\DeclareMathOperator{\PGL}{PGL}
\DeclareMathOperator{\SL}{SL}
\DeclareMathOperator{\Res}{Res}
\DeclareMathOperator{\Jac}{Jac}
\DeclareMathOperator{\Hom}{Hom}
\DeclareMathOperator{\Div}{Div}
\DeclareMathOperator{\Char}{char}
\DeclareMathOperator{\divisor}{div}
\DeclareMathOperator{\rank}{Ran}
\title{Heat Equations and Hearing the Genus on $p$-adic Mumford Curves via Automorphic Forms}
\author{Patrick Erik Bradley}
\date{\today}
\begin{document}

\maketitle

\begin{abstract}
A self-adjoint operator is constructed on the $L^2$-functions on the $K$-rational points $X(K)$ of a Mumford curve $X$ defined over a non-archimedean local field $K$. It generates a Feller semi-group, and the corresponding heat equation describes a Markov process on $X(K)$. Its spectrum is non-positive, contains zero and has finitely many limit points which are the only non-eigenvalues, and correspond to the zeros of a given regular differential $1$-form on $X(K)$. This allows to recover the genus of $X$ from the spectrum. The hyperelliptic case allows in principle an explicit genus extraction.
\end{abstract}

\section{Introduction}


The Riemann theta function answers the task of inverting abelian integrals. This is also known as the Jacobi inversion problem which asks for an explicit inverse of the Abel-Jacobi map
\[
Y^{(g)}\to \Jac(Y)
\]
where $Y$ is a compact Riemann surface of genus $g$, $\Jac(Y)$ is its Jacobian variety, and $Y^{(g)}$ is the $g$-fold symmetric product of $Y$. The map is determined by the natural embedding 
\[
Y\to \Jac(Y)
\]
which is also called Abel-Jacobi map. The Riemann theta function can be used to construct holomorphic functions on the Riemann surface. In this article, the corresponding $p$-adic version of the Riemann theta function developped by Gerritzen and van der Put in \cite{GvP1980} is used in order to obtain holomorphic functions on a Mumford curve $X$ (which can be viewed as a $p$-adic analogon of a Riemann surface), whose divisor is controlled by specifying points on the universal covering of $X$, at least in the case of a hyperelliptic Mumford curve. Such a function $f$ is then used in order to define the kernel function  of an integral operator $\mathcal{H}_f$ on spaces of complex-valued functions on $X$, and then study the corresponding heat equation. The general idea of an inverse problem in $p$-adic algebraic geometry is to extract information about an object in this domain (here, a Mumford curve)  via a heat equation or a heat operator. In the present case, it is the genus of $X$ which turns out to be revealed by the spectrum of $\mathcal{H}_f$. This result looks encouraging for the study of more refined inverse problems about Mumford curves in future work. 
\newline

In order to obtain explicit functions on a Mumford curve, it is helpful to have an explicit uniformisation procedure, and also to be able to compute the Riemann theta function in a sufficiently explicit manner. These tasks are performed in the hyperelliptic case. Namely,
the first task relies case on a uniformisation of hyperelliptic Mumford curves by Gerritzen \cite{Gerritzen1985}, whereas the second task uses methods by van Steen for such curves \cite{vanSteen1984,Steen1989}. These methods contribute towards explicit period calculations, as done e.g.\ in the genus $2$ case by Teitelbaum \cite{Teitelbaum1988} and others. Related to this is the problem of explicitly relating the ramification points of a finite cover
$X\to\mathds{P}^1$ to the induced ramification points in the universal covering space $\Omega$ of the Mumford curve $X$. This problem is still unsolved in general, and in the hyperelliptic case it is conjectured that the 
two sets of ramification points have identical relative positions
\cite[IX.2.5.3]{GvP1980}.
\newline

Hearing the shape of structures via $p$-adic analysis began, to the best knowledge of the author, with reconstructing finite graphs via $p$-adic Laplacians \cite{BL_shapes_p}.
In \cite{brad_thetaDiffusionTateCurve}, the author develops Laplacian operators on a Tate curve $E_q$ via theta functions. The difference to the case of Mumford curves of arbitrary genus considered here, is that in the former case, the multiplicative group structure of $E_q$ can be used in order to produce  $q$-invariant  functions with \emph{varying}  zero $x$ and pole $x^{-1}$ for   $K$-rational points $x$ of $E_q$. Whereas here, this not being possible, the approach is to define an invariant function with a \emph{fixed} set of $K$-rational zeros and poles   on the Mumford curve. So, in the genus $1$ case, it is possible to extract information about the existence of $2$-torsion points from the spectrum of the operator. 
\newline

Heat equations on Mumford curves are already developed in \cite{brad_HeatMumf}. However, these rely only on the skeleton or reduction graph of the curve. In contrast to this, here the geometry of $X$ comes into play via choosing a regular differential $1$-form $\omega$ on the $K$-rational points $X(K)$ for a non-archimedean local field $K$. This $\omega$ defines in a well-known way a measure $\absolute{\omega}$ on $X(K)$.
The idea for obtaining an integral operator is in fact, just like in previous work of the author, a slight generalisation of Z\'{u}\~{n}iga-Galindo's method of constructing a $p$-adic operator on a finite graph \cite{ZunigaNetworks}, but this time using the measure $\absolute{\omega}$ and a compatible holomorphic function $f$ on $X$ with $K$-rational divisor.
\newline

In a private communication, M.\ van der Put observed the interesting fact that the space $X(K)$, viewed as a $1$-dimensional $K$-manifold for the 
uninteresting (totally disconnected) topology already seems to
be sufficient to use for a diffusion process, but what should be done is to establish a diffusion process on $X$ endowed with the much more interesting Grothendieck topology.
\newline

The results of this article can be stated in three theorems, where $V(g)\subset X(K)$ denotes the zero set of a function or differential form $g$ (whose zero sets are always assumed $K$-rational):
\newline

\noindent
{\bf Theorem 1.} 
\emph{Assume that $V(\omega)\subseteq V(f)$.
Then the operator $\mathcal{H}_f$ is self-adjoint on the Hilbert space $L^2(X(K)\setminus V(f),\absolute{\omega})$, which further has an orthogonal $\mathcal{H}_f$-invariant decomposition into a part in which analogues of the Kozyrev wavelets are eigenfunctions with negative eigenvalues, and a finite-dimensional part on which $\mathcal{H}_f$ has a non-positive spectrum containing zero, and $n$ accumulation points with $n$ being the size of $V(\omega)$, and these are the only non-eigenvalues in the spectrum.}
\newline

\noindent
{\bf Theorem 2.} 
\emph{There exists a probability measure $p_t(x,\cdot)$ on the Borel $\sigma$-algebra of $X(K)\setminus V(\omega)$ such that the Cauchy problem for the heat equation 
\[
\frac{\partial}{\partial t}h(x,t)-\mathcal{H}_f h(x,t)=0
\]
with continuous initial condition $h_0(x)$ has a unique $C^1$-solution
of the form
\[
h(x,t)=\int_{X(K)\setminus V(\omega)}
h_0(y)p_t(x,\absolute{\omega(y)})
\]
and $p_t(x,\cdot)$ is the transition function of a Markov process whose
paths are right continuous and have no other discontinuities than jumps.}
\newline

\noindent
{\bf Theorem 3.} 
\emph{Given a regular $1$-form $\omega$ on $X$, it is possible to recover the genus $g(X)$ from the spectrum of $\mathcal{H}_f$.}
\newline

The proof of Theorem 1 uses a Hilbert-Schmidt operator on an infinite graph derived from $\mathcal{H}_f$, and the result that Kozyrev wavelets can be extended in a useful way to Mumford curves.
\newline

The approach to proving Theorem 2 is to show that one obtains a Feller semi-group in a very general setting on a Hausdorff space endowed with a positive Radon measure from a sufficiently well-behaved kernel function. This includes the kernel function used for constructing $\mathcal{H}_f$.
\newline

Theorem 3 is shown by examining the tree of a good fundamental for the action of the Schottky group after removing the zeros of $\omega$.
\newline

The statement of Theorem 3 comes after investigating the case of a hyperelliptic case, in order to obtain an explicit meromorphic function in terms of the ramification divisor of the cover $X\to\mathds{P}^1$ given by the hyperelliptic involution.
This function is to  be used
for  the kernel function of the operator.
It is remarked that in the hyperelliptic case, the genus extraction result of Theorem 3 can in principle be made explicit using this operator.
\newline

The following section constructs a diffusion operator on the $K$-rational points of a Mumford curve from automorphic forms and a holomorphic differential $1$-form. Self-adjointness is verified, aand the spectrum calculated.
Section 3 studies the heat equation for this operator and obtains the Feller semi-group property needed for a Markov process. 
Section 4 puts the framework of $p$-adic Riemann theta functions into a setting needed for constructing kernel functions from these. In particular two forms of their functional equations are formulated and proven, even if, of course, this is implicitly contained in \cite{GvP1980}. Further, the hyperelliptic case is studied in order to write down an explict meromorphic function together with a compatible differential $1$-form from from the equation given by the degree-$2$-cover of the projective line induced by the hyperelliptic involution. In the end, Theorem 3 is proven, and explained that in the hyperelliptic case, the genus extraction can be made explicit in principle.  
\newline

Throughout the article, $K$ will denote a non-archimedean local field with ring of integers $O_K$ having a
prime element $\pi\in O_K$. The Haar measure on $K$ is denoted as $\mu$, but often as $dx$, if $x$ is a variable. It is normalised such that $\mu(O_K)=1$, and the absolute value $\absolute{\cdot}$ on $K$ is chosen such that
\[
\absolute{\pi}=p^{-f}
\]
where $f$ is the degree of the residue field
\[
O_K/\pi O_K
\]
over the finite field $\mathds{F}_p$ with $p$ elements, where $p$ is a prime number. 
The maximal ideal of $O_K$ 
\[
\mathfrak{m}_K=\pi O_K
\]
is also used.
The completion of the algebraic closure of $K$ is written as $\mathds{C}_p$.
Indicator functions will be written as
\[
\Omega(x\in B)
=\begin{cases}
1,&x\in B
\\
0,&x\notin B
\end{cases}
\]
where $B$ is a measurable subset of $K$. Finally, 
a character 
\[
\chi_K\colon K\to S^1
\]
of $K$ is fixed.

\section{Diffusion on a Mumford Curve}

The lecture notes \cite{GvP1980} provide a thorough introduction to Mumford curves. A short overview of these can be found in \cite[\S5.4]{FP2004}.
Let $X=\Omega/\Gamma$ be a Mumford curve of genus
$g=g(X)\ge2$. Assume that its Schottky group $\Gamma$ is faithfully represented as a discrete subgroup of $\PGL_2(K)$, and let
$\Omega(K)\subset\mathds{P}^1(K)$ be the $K$-rational points of the universal covering space $\Omega\subset\mathds{P}^1_K$ for the covering map $\rho\colon\Omega\to X$. 

\subsection{Measures on the $K$-Rational Points}

Let $\omega\in\Omega_{X/K}^1$ be a regular differential $1$-form on the Mumford curve $X$. It is known that $\omega$ has $2g-2$ zeros in $X(\bar{K})$. So, assume that $K$ is sufficiently large to capture these zeros.
Following Weil \cite[Ch.\ II.2.2]{WeilAAG}
or Igusa \cite[Ch.\ 7.4]{IgusaLocalZeta}, obtain a positive Borel measure $\absolute{\omega}$ on the complement of the vanishing locus of $\absolute{\omega}$ in $X(K)$. Since this is locally on each chart $U\to K$ a zero set w.r.t.\ the Haar measure of $K$ \cite[Lem.\ 3.1]{Yasuda2017}, this measure extends to a positive Borel measure $\absolute{\omega}$ on $X(K)$.
In the case of a Tate elliptic curve $E_q$ considered in \cite{brad_thetaDiffusionTateCurve}, this measure is the invariant measure $\absolute{\omega}=\frac{\absolute{dx}}{\absolute{x}}$, where $\absolute{dx}$ is the  Haar measure of $K$ restricted to the annulus which is a fundamental domain for $E_q(K)$. 

\begin{Assumption}\label{assumption}
It is assumed that the differential form $\omega\in\Omega_{X/K}^1$ and the functions $f\in K(X)$ used in the following have their zeros and poles in $K$.
\end{Assumption}

\subsection{Kernel Functions via Automorphic Forms}

A meromorphic function $f\colon\Omega\to K$ is an \emph{automorphic form with constant automorphy factors}, if 
for all $\alpha\in\Gamma$, there exists some $c(\alpha)\in \mathds{C}_p^\times$ such that
\[
f(z)=c(\alpha)\cdot f(\alpha z)
\]
for all $z\in\Omega$.
The map
\[
c\colon\Gamma\to \mathds{C}_p^\times,\;\alpha\mapsto c(\alpha)
\]
 is called the \emph{automorphy factor} of $f$.
 Any automorphy factor $c$ of an automorphic form is
a character:
\[
c\in\Hom(\Gamma,\mathds{C}_p^\times)
\]
whose values are in $\mathds{C}_p^\times$. In fact, the following holds true:

\begin{Lemma}\label{Dual=Automorphy}
The dual group $\Hom(\Gamma,\mathds{C}_p^\times)$ equals the abelian group of all possible automorphy factors of automorphic forms  on $\Omega$ with constant automorphy factors.
\end{Lemma}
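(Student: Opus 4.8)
The plan is to prove the two inclusions separately. One direction is essentially the remark preceding the statement: a short cocycle computation, using that $\mathds{C}_p^\times$ is abelian, shows that any automorphy factor lies in $\Hom(\Gamma,\mathds{C}_p^\times)$; moreover, if $f_1,f_2$ are automorphic forms with constant automorphy factors $c_1,c_2$, then $f_1f_2$ and $1/f_1$ are again such forms, with factors $c_1c_2$ and $c_1^{-1}$, so the set $A$ of all realised automorphy factors is in fact a \emph{subgroup} of $\Hom(\Gamma,\mathds{C}_p^\times)$ (containing the trivial character, realised by constants). It then remains to prove $A=\Hom(\Gamma,\mathds{C}_p^\times)$.

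For this I would invoke the $p$-adic theta functions of Gerritzen--van der Put. For $a,b\in\Omega$ the infinite product $\theta(a,b;z)=\prod_{\gamma\in\Gamma}\tfrac{z-\gamma a}{z-\gamma b}$ converges --- the factors tend to $1$ because $\gamma a$ and $\gamma b$ approach a common limit point as $\gamma\to\infty$ --- and defines a meromorphic automorphic form with a constant automorphy factor $c_{a,b}\in\Hom(\Gamma,\mathds{C}_p^\times)$ whose divisor on $X=\Omega/\Gamma$ is $\rho(a)-\rho(b)$; see \cite{GvP1980}. Reading the relations off the product gives $c_{a,a}=1$, $c_{a,b}c_{b,c}=c_{a,c}$, $c_{\gamma a,\gamma b}=c_{a,b}$, and hence $c_{\gamma a,b}=c_\gamma\cdot c_{a,b}$ with $c_\gamma:=c_{\gamma b,b}$ independent of $b$. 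The characters $c_{\gamma_1},\dots,c_{\gamma_g}$ attached to a set of free generators of the Schottky group generate the period lattice $\Lambda\subset\Hom(\Gamma,\mathds{C}_p^\times)$, for which $\Hom(\Gamma,\mathds{C}_p^\times)/\Lambda$ is the analytic Jacobian $\Jac(X)(\mathds{C}_p)$, with $c_{a,b}\bmod\Lambda$ corresponding to the class $[\rho(a)-\rho(b)]$ under the analytic Abel--Jacobi isomorphism \cite{GvP1980}.

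Now all the $c_{a,b}$, and in particular all the $c_{\gamma_i}$, lie in $A$, so $\Lambda\subseteq A$. On the other hand $\mathds{C}_p$ is algebraically closed, so every point of the curve $X_{\mathds{C}_p}$ is $\mathds{C}_p$-rational, and therefore $\Jac(X)(\mathds{C}_p)=\Pic^0(X_{\mathds{C}_p})$ is generated as an abelian group by the point-difference classes $[\rho(a)-\rho(b_0)]$ for a fixed base point $b_0\in\Omega$ and $a$ ranging over $\Omega$ (lifting points of $X$ along the surjection $\rho$). Translated back through the Abel--Jacobi isomorphism, this says that the residues $c_{a,b}\bmod\Lambda$ generate $\Hom(\Gamma,\mathds{C}_p^\times)/\Lambda$, i.e. $A\cdot\Lambda=\Hom(\Gamma,\mathds{C}_p^\times)$. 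Since $\Lambda\subseteq A$, this forces $A=\Hom(\Gamma,\mathds{C}_p^\times)$, which is the claim.

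As for difficulty: the cocycle identities and the bookkeeping with the period lattice are routine once the theta functions are in hand, so the one place that needs genuine care is the step upgrading ``the $c_{a,b}$ exist'' to ``the $c_{a,b}$ span modulo $\Lambda$'' --- i.e. the fact that $\Pic^0$ is generated by differences of rational points, which hinges on $\mathds{C}_p$ being algebraically closed --- together with correctly importing the analytic description of the $p$-adic Jacobian from \cite{GvP1980}. I expect that to be the main point; everything else is formal.
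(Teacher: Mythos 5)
Your argument is correct, but note that the paper does not actually prove this lemma: it is a bare citation to \cite[Prop.\ VI.3.4]{GvP1980}, so you have reconstructed the content of the cited result rather than matched an argument in the text. The formal part of your proof is fine and complete: the cocycle computation showing automorphy factors are characters, the observation that the realised factors form a subgroup $A$ containing the trivial character, the inclusion $\Lambda\subseteq A$ via the forms $u_\gamma=\theta(a,\gamma a;\cdot)$, and the deduction $A=\Hom(\Gamma,\mathds{C}_p^\times)$ from $\Lambda\subseteq A$ together with surjectivity of $A\to\Hom(\Gamma,\mathds{C}_p^\times)/\Lambda$. The one caveat is where you locate the hard content. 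You derive the surjectivity modulo $\Lambda$ from the analytic Abel--Jacobi isomorphism $\Jac(X)\cong G_\Gamma/\Lambda_\Gamma$ plus the fact that $\Pic^0$ over the algebraically closed field $\mathds{C}_p$ is generated by point differences; but in \cite{GvP1980} the surjectivity of $\Pic^0(X)\to G_\Gamma/\Lambda_\Gamma$ is itself established via the Riemann vanishing theorem for $\theta_\Gamma$ (surjectivity of $X^{(g)}\to G_\Gamma/\Lambda_\Gamma$), and is essentially equivalent to the statement being proved once one knows every automorphic form is a product of theta functions. So your proof is not circular \emph{provided} you import the full Jacobian theory as a black box, as you do, but the step you flag as ``routine bookkeeping with the period lattice'' is in fact where all the analytic difficulty of the original proof is hiding; the genuinely independent input is the Riemann vanishing theorem, not the algebraic fact that $\Pic^0$ is generated by differences of rational points.
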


\begin{proof}
\cite[Prop.\ VI.3.4]{GvP1980}.
\end{proof}

We will just say ``automorphic form'' instead of ``automorphic form with constant automorphy factors'' in the following.
\newline

A  special kind of automorphic form on $\Omega$ is a function of the form
\[
\theta(a,b;z)=\prod\limits_{\gamma\in\Gamma}\frac{z-\gamma(a)}{z-\gamma(b)}
\]
with $z\in\Omega$.
It is a meromorphic function on $\Omega$ whose zero set is the $\Gamma$-orbit of $a$, and whose pole set the $\Gamma$-orbit of $b$, if $\Gamma a\neq\Gamma  b$, in which case whose multiplicities are all $1$. Otherwise, it has no zeros or poles. Furthermore, it
has the property:
\[
\theta(a,b;z)=c(a,b;\alpha)\cdot\theta(a,b;\alpha(z))
\]
with $c(a,b;\alpha)\in \mathds{C}_p^\times$ 
for any $\alpha\in \Gamma$. In fact, the map 
\[
c(a,b;\cdot)\colon\Gamma\to\mathds{C}_p^\times,\;\alpha\mapsto c(a,b;\alpha)
\]
is an element of $\Hom(\Gamma,\mathds{C}_p^\times)$, i.e.\ it is the automorphy factor of $\theta(a,b;z)$, cf.\ \cite[II.\S2] {GvP1980}.
\newline

\begin{Lemma}\label{automorphicForms}
Any automorphic form $f$ on $\Omega$ can be written as
\[
f(z)=a_0\cdot\theta(a_1,b_1;z)\cdots\theta(a_r,b_r;z)
\]
with $a_0\in\mathds{C}_p$, $a_i,b_i\in \Omega$, $i=1,\dots,r$, and some natural number $r\ge 1$.
\end{Lemma}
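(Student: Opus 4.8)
The plan is to peel theta factors off $f$, matching its divisor, until only a nowhere-vanishing function is left, and then to show that this last factor is a constant.

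First, since $f$ and $f\circ\alpha$ differ by the nonzero scalar $c(\alpha)$ for every $\alpha\in\Gamma$, they have the same divisor on $\Omega$; hence $\divisor(f)$ is $\Gamma$-invariant and descends along $\rho$ to a divisor $\bar D$ on the \emph{compact} curve $X$, which is therefore finite. The crucial preliminary is that $\deg\bar D=0$. This comes from the residue theorem: the logarithmic differential $df/f$ on $\Omega$ is $\Gamma$-invariant, because $d(f\circ\alpha)/(f\circ\alpha)=d(c(\alpha)^{-1}f)/(c(\alpha)^{-1}f)=df/f$; hence it descends to a meromorphic $1$-form $\eta$ on the smooth proper curve $X$, whose poles are all simple, located exactly at the support of $\bar D$, with residue at each such point equal to the multiplicity of that point in $\bar D$. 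Since the residues of a meromorphic differential on $X$ sum to zero, $\deg\bar D=\sum_{P}\Res_{P}\eta=0$.

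Next write $\bar D=\sum_{i=1}^{r}[P_i]-\sum_{i=1}^{r}[Q_i]$ with both sums effective (possible precisely because $\deg\bar D=0$; take $r=1$ and $P_1=Q_1$ if $\bar D=0$, so that $r\ge1$). Choose lifts $a_i,b_i\in\Omega$ of $P_i,Q_i$ under $\rho$ and set $g(z)=\theta(a_1,b_1;z)\cdots\theta(a_r,b_r;z)$. By the stated properties of $\theta$, $g$ is an automorphic form whose divisor on $\Omega$ is exactly $\rho^{*}\bar D=\divisor(f)$; hence $u:=f/g$ is an automorphic form with empty divisor on $\Omega$, i.e.\ a nowhere-vanishing holomorphic function on $\Omega$ with constant automorphy factor $c_u\in\Hom(\Gamma,\mathds{C}_p^\times)$. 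Such a $u$ descends to a nowhere-vanishing section over $X$ of the degree-zero line bundle attached to $c_u$; since that bundle is then trivial, $c_u$ lies in the kernel of the uniformisation map $\Hom(\Gamma,\mathds{C}_p^\times)\twoheadrightarrow\Jac(X)(\mathds{C}_p)\cong\Pic^0(X)$, and by the explicit description of that map in \cite{GvP1980} this kernel --- the period lattice --- is generated by the automorphy factors of the degenerate theta functions $\theta(a,\gamma_1a;z),\dots,\theta(a,\gamma_ga;z)$ (for any fixed $a\in\Omega$ and free generators $\gamma_1,\dots,\gamma_g$ of $\Gamma$), each of which has empty divisor on $\Omega$. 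Dividing $u$ by the matching product $\theta(a,\gamma_1a;z)^{m_1}\cdots\theta(a,\gamma_ga;z)^{m_g}$ leaves a nowhere-vanishing holomorphic automorphic form with \emph{trivial} automorphy factor, hence a nowhere-vanishing holomorphic function on the compact curve $X$ --- a constant $a_0$. Reassembling all the theta factors produces the claimed expression for $f$.

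The residue computation and the divisor bookkeeping are routine; the real obstacle is the final step --- showing that a holomorphic automorphic form without zeros or poles on $\Omega$ is a constant times a product of degenerate theta functions. That is where the geometry of the Mumford curve genuinely enters, through the uniformisation of $\Jac(X)\cong\Pic^0(X)$ by the period lattice inside $\Hom(\Gamma,\mathds{C}_p^\times)$, as developed in \cite{GvP1980} (compare the role of that reference in Lemma~\ref{Dual=Automorphy}).
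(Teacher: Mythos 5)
The paper itself offers no argument here --- it simply cites \cite[Thm.\ II.3.2]{GvP1980} --- so your proposal is doing real work, and its first three steps (the $\Gamma$-invariance of $\divisor(f)$, the degree-zero claim for the induced divisor $\bar D$ on $X$, and the reduction to a nowhere-vanishing automorphic form $u=f/g$ by matching the divisor with a product of theta functions) are exactly the reduction used in \cite{GvP1980}. Two caveats on that part: the residue-theorem proof of $\deg\bar D=0$ only literally works when $\Char K=0$ (at a zero of order $n$ the residue of $df/f$ is $n\cdot 1$ in the residue field of the local ring, so in positive characteristic you only get $\deg\bar D\equiv 0$); and you should make sure the $1$-form $df/f$ really descends to the algebraic curve $X$ (rigid GAGA) before summing residues. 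These are repairable.

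The genuine gap is in your final step. You dispose of the nowhere-vanishing factor $u$ by arguing that its automorphy factor $c_u$ lies in the kernel of $G_\Gamma=\Hom(\Gamma,\mathds{C}_p^\times)\to\Pic^0(X)$ and then invoking that this kernel is the period lattice $\Lambda_\Gamma$ generated by the $c_\gamma$. But in \cite{GvP1980} the identification $\Jac(X)\cong G_\Gamma/\Lambda_\Gamma$ (Chapter VI) is \emph{built on} Theorem II.3.2: the computation that the kernel of the line-bundle map is exactly $\Lambda_\Gamma$ amounts precisely to the statement that every invertible holomorphic function on $\Omega$ with constant automorphy factor is a constant times a product of the degenerate theta functions $u_\gamma=\theta(a,\gamma a;z)$ --- which is the zero-divisor case of the lemma you are proving. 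As written, your argument is therefore circular at the one place where the non-archimedean geometry actually has to be used. The missing ingredient is the direct classification of $\mathcal{O}(\Omega)^\times$: via the reduction of $\Omega$, an invertible analytic function determines a bounded $\mathds{Z}$-valued current (harmonic cocycle) on the tree of $\Gamma$, invertible functions are determined by their current up to a multiplicative constant, and the currents of the $u_\gamma$ generate those arising from functions with constant automorphy factor. That is the content of \cite[II.2]{GvP1980}, and it must be supplied (or cited at that earlier point) rather than derived from the Jacobian uniformisation.
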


\begin{proof}
\cite[Theorem II.3.2]{GvP1980}.
\end{proof}

If $h(z)$ is a $\Gamma$-invariant meromorphic function on $\Omega$, then
\[
\divisor(\rho_*h)\in\Div(X)
\]
 with $\rho\colon\Omega\to X$ the universal covering map,
 is an algebraic divisor
of $X$. This means that only finitely many coefficients in the formal sum of points are non-zero.
\newline

Define
\[
u_\alpha(z)=\theta(a,\alpha a;z)
\]
with $\alpha\in\Gamma$ and $z\in\Omega$.

\begin{Lemma}
If $\alpha\in \Gamma\setminus[\Gamma,\Gamma]$, then $u_\alpha(z)$ is not constant.
\end{Lemma}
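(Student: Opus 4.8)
The plan is to reduce the statement to the non-triviality of the automorphy factor of $u_\alpha$, and then to invoke the non-degeneracy of the period pairing underlying the analytic construction of the Jacobian of $X$ in \cite{GvP1980}.

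First, observe that $u_\alpha(z)=\theta(a,\alpha a;z)$ has \emph{trivial divisor}: its would-be zero set is $\Gamma a$ and its would-be pole set is $\Gamma(\alpha a)=\Gamma a$, so by the description of $\theta$-functions recalled above it has no zeros and no poles on $\Omega$. Thus $u_\alpha$ is a nowhere-vanishing holomorphic automorphic form with automorphy factor $c(a,\alpha a;\cdot)\in\Hom(\Gamma,\mathds{C}_p^\times)$. If $u_\alpha$ were constant, the relation $u_\alpha(z)=c(a,\alpha a;\beta)\,u_\alpha(\beta z)$ would force $c(a,\alpha a;\beta)=1$ for every $\beta\in\Gamma$. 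Conversely, if $c(a,\alpha a;\cdot)\equiv 1$, then $u_\alpha$ is $\Gamma$-invariant, hence descends via $\rho$ to a rational function on $X$ with empty divisor, i.e.\ to a nowhere-vanishing regular function on the proper curve $X$; such a function is constant. Therefore $u_\alpha$ is non-constant if and only if $c(a,\alpha a;\cdot)$ is a non-trivial character of $\Gamma$.

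It remains to show that $c(a,\alpha a;\cdot)$ is non-trivial whenever $\alpha\notin[\Gamma,\Gamma]$. For $\gamma_1,\gamma_2\in\Gamma$ set
\[
\langle\gamma_1,\gamma_2\rangle:=\frac{\theta(a,\gamma_1 a;\gamma_2 z)}{\theta(a,\gamma_1 a;z)}=c(a,\gamma_1 a;\gamma_2)^{-1},
\]
which is independent of $z\in\Omega$. By \cite{GvP1980} (the construction of the analytic Jacobian $\Hom(\Gamma,\mathds{C}_p^\times)/\Lambda$, where $\Lambda$ is the image of the period map $\gamma\mapsto\langle\gamma,\cdot\rangle$), this pairing is symmetric, bilinear, factors through $\Gamma^{\mathrm{ab}}\times\Gamma^{\mathrm{ab}}$, and the associated real quadratic form $\gamma\mapsto -\log\absolute{\langle\gamma,\gamma\rangle}$ on $\Gamma^{\mathrm{ab}}\cong\mathds{Z}^g$ is positive definite (this positive-definiteness is precisely what makes the quotient torus an abelian variety; equivalently, the period map $\Gamma^{\mathrm{ab}}\hookrightarrow\Hom(\Gamma,\mathds{C}_p^\times)$ is injective). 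In particular, for $\alpha\notin[\Gamma,\Gamma]$ the image of $\alpha$ in $\Gamma^{\mathrm{ab}}$ is non-zero, so $\absolute{\langle\alpha,\alpha\rangle}\neq 1$, and a fortiori $\langle\alpha,\alpha\rangle\neq 1$, i.e.\ $c(a,\alpha a;\alpha)\neq 1$. Hence the automorphy factor of $u_\alpha$ is non-trivial and, by the previous paragraph, $u_\alpha$ is not constant.

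The only real subtlety is bookkeeping: one must extract from \cite{GvP1980} the precise form of the period pairing together with its non-degeneracy (or, what already suffices here, the injectivity of $\Gamma^{\mathrm{ab}}\hookrightarrow\Hom(\Gamma,\mathds{C}_p^\times)$) and match sign conventions. The topological input that $X$ is proper — used in the ``constant'' direction — is automatic for Mumford curves, and no genuinely new argument is required.
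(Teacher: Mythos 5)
Your argument is logically sound, and it is a genuine proof where the paper offers none: the paper simply cites \cite[Prop.\ II.3.2]{GvP1980}. Your first paragraph is a clean and correct reduction — since $\Gamma a=\Gamma(\alpha a)$, the form $u_\alpha$ has empty divisor, so $u_\alpha$ is constant if and only if its automorphy factor $c(a,\alpha a;\cdot)=c_\alpha$ is the trivial character (the nontrivial direction using that a $\Gamma$-invariant, nowhere-vanishing holomorphic function descends to the proper curve $X$ and is therefore constant). That reduction is worth keeping.

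The caveat concerns your second step. You import the definiteness of the quadratic form $\gamma\mapsto-\log\absolute{\langle\gamma,\gamma\rangle}$ on $\Gamma^{\abelian}$, which is a Chapter VI result in \cite{GvP1980} (the period matrix of the Jacobian), whereas the lemma you are proving sits in Chapter II and is, in Gerritzen--van der Put's development, an ingredient of (and essentially equivalent to) the statement that the period lattice $\Lambda_\Gamma=\mathset{c_\gamma}$ has rank $g$. Indeed, by your own first paragraph, ``$c_\alpha$ nontrivial for all $\alpha\notin[\Gamma,\Gamma]$'' is exactly the injectivity of $\Gamma^{\abelian}\to G_\Gamma$, $\gamma\mapsto c_\gamma$, which is what makes $\Lambda_\Gamma\cong\mathds{Z}^g$; so quoting the definiteness of the period form (or the rank of $\Lambda_\Gamma$) to prove the lemma is circular relative to the source, even though it is internally consistent as a deduction between two true published statements. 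If you want a non-circular argument you need an independent proof that $\absolute{\langle\alpha,\alpha\rangle}\neq1$ for $\alpha\notin[\Gamma,\Gamma]$ — e.g.\ the tree-theoretic computation identifying $-\log\absolute{\langle\alpha,\beta\rangle}$ with the intersection pairing of the corresponding cycles on the reduction graph $\mathscr{T}_K/\Gamma$, whose positive definiteness on $H_1$ of a finite graph is elementary. As written, your proof buys a short derivation at the cost of a much heavier (and logically upstream-sensitive) input; the paper's citation avoids the issue by pointing at the place where the elementary argument is actually carried out.
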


\begin{proof}
\cite[Prop.\ II.3.2]{GvP1980}. 
\end{proof}

\begin{remark}
Any automorphy factor is of the form
\[
c(a,b;\alpha)=\frac{u_\alpha(a)}{u_\alpha(b)}
\]
with
$u_\alpha(z)=\theta(a,\alpha a;z)$, cf.\
 \cite[II.2.3.6]{GvP1980}, which is a meromorphic function without zeros or poles.
\end{remark}

Any $\Gamma$-invariant function on $\Omega$ can now be constructed as
follows:

\begin{Proposition}\label{invariantFunctions}
Any $\Gamma$-invariant meromorphic function $h$ on $\Omega$ is of the form
\[
h(z)=\frac{f_1(z)}{f_2(z)}
\]
with $f_1,f_2$ automorphic forms on $\Omega$ whose automorphy factors coincide, and such that
the divisor $\divisor(\rho_*h)$ has degree zero.
\end{Proposition}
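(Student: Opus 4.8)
The plan is to treat the two assertions separately. For the degree-zero statement I would first descend: since $h$ is $\Gamma$-invariant and meromorphic on $\Omega$, it induces a meromorphic function $\bar h$ on the proper algebraic curve $X=\Omega/\Gamma$, i.e.\ $\bar h\in K(X)$, with $\divisor(\rho_*h)=\divisor(\bar h)$. As recorded just before the proposition, this divisor is algebraic, hence finitely supported; being principal on a projective curve, it has degree zero. This also identifies the polar locus of $h$ in $\Omega$ as $\rho^{-1}$ of the finite pole set of $\bar h$, that is, a finite union of $\Gamma$-orbits $\Gamma b_1,\dots,\Gamma b_\ell$ (listed with multiplicities), which is what makes the next construction possible.

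For the factorisation I would build the denominator from these poles and let the numerator follow. Choosing a base point $b_0\in\Omega$ lying in none of the orbits $\Gamma b_j$ and outside the support of $\divisor(h)$, set
\[
f_2(z)=\prod_{j=1}^{\ell}\theta(b_j,b_0;z).
\]
As a finite product of the basic theta automorphic forms, $f_2$ is an automorphic form, with automorphy factor $c=\prod_j c(b_j,b_0;\cdot)\in\Hom(\Gamma,\mathds{C}_p^\times)$ and divisor $\sum_j\Gamma b_j-\ell\,\Gamma b_0$ on $\Omega$. Now put $f_1(z)=h(z)f_2(z)$, again meromorphic on $\Omega$. Using $\Gamma$-invariance of $h$, for every $\alpha\in\Gamma$ one gets $f_1(\alpha z)=h(z)\,f_2(\alpha z)=c(\alpha)^{-1}f_1(z)$, so $f_1$ is an automorphic form whose automorphy factor equals $c$, the automorphy factor of $f_2$; and $h=f_1/f_2$ holds by construction. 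If one wants $f_1,f_2$ displayed in the explicit shape of Lemma~\ref{automorphicForms}, one may additionally apply that lemma to $f_1$, reading off a theta-product from its divisor $\sum_i\Gamma a_i-\ell\,\Gamma b_0$, where $\Gamma a_i$ are the zero orbits of $h$; here $\#\{i\}=\ell$ precisely because $\divisor(\bar h)$ has degree zero.

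I do not expect a genuine obstacle. The only point needing care is the passage between divisors on $\Omega$ and on $X$: one must know that the poles of $h$ lift to finitely many $\Gamma$-orbits (exactly the algebraicity and hence finiteness of $\divisor(\bar h)$ recorded above), and one must pick $b_0$ generically so that forming $f_2$ introduces no unwanted zeros or poles. The substance of the statement reduces to two elementary observations — that a $\Gamma$-invariant meromorphic function multiplied by an automorphic form is again an automorphic form with the \emph{same} automorphy factor, and that a principal divisor on the projective curve $X$ has degree zero.
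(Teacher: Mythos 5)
Your proof is correct, but it takes a genuinely different route from the paper's. The paper applies Lemma~\ref{automorphicForms} directly to $h$ (viewed as an automorphic form with trivial automorphy factor), writes it as a product of theta factors, and then groups those factors into a numerator and a denominator whose automorphy factors agree; the degree-zero claim is read off at the end from $\divisor(\theta(a,b;z))=\Gamma a-\Gamma b$. You instead work from the divisor side: you first descend $h$ to $\bar h\in K(X)$ to get finiteness and degree zero of the principal divisor, then manufacture the denominator $f_2=\prod_j\theta(b_j,b_0;z)$ out of the pole orbits and set $f_1=hf_2$, checking that multiplication by the invariant $h$ preserves the automorphy factor. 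Your construction buys two things: it sidesteps the delicate ``pairing'' step in the paper (triviality of the product of characters $\prod_i c(a_i,b_i;\cdot)$ does not by itself pair the factors off against one another, and the paper's residual invariant factor $u(z)$ leaves that recursion implicit), and it produces $f_1,f_2$ with explicitly controlled divisors. What the paper's version buys in exchange is that both $f_1$ and $f_2$ appear directly as theta products, the shape used later for the kernel constructions; you recover that form only by one further application of Lemma~\ref{automorphicForms} to $f_1$, as you note. The only inputs you use beyond the definitions are the algebraicity of $\divisor(\rho_*h)$ (recorded in the paper immediately before the proposition) and a generic choice of $b_0$, both of which you flag, so I see no gap.
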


\begin{proof}
First observe that
the $\Gamma$-invariant meromorphic functions on $\Omega$ are precisely the ones with trivial automorphy factor. From Lemma \ref{automorphicForms}, observe that the only way to produce an invariant function is to make a product of factors $\theta(a_i,b_i;z)$ resulting in a trivial automorphy factor. This means a grouping of these $\theta$-factors into pairs $\theta(a_i,b_i;z),\theta(a_i',b_i';z)$ 
such that
\[
\forall\alpha\in\Gamma\colon c(a_i,b_i;\alpha)=c(a_i',b_i';\alpha)
\]
for all $i=1,\dots,r$ with some appropriate natural $r\ge1$. Thus,
after continuing the pairing process, if necessary, 
$h$ is of the form
\[
h(z)=\frac{\theta(a_1,b_1;z)\cdots\theta(a_r,b_r;z)}{\theta(a_1',b_1';z)\cdots\theta(a_r',b_r';z)}\cdot u(z)
\]
where 
$u(z)$ is an automorphic form having trivial automorphy factor, i.e.\ a $\Gamma$-invariant meromorphic function on $\Omega$. 
Hence, $\rho_*h(x)$ is an element of the function field $\mathds{C}_p(X)$ of $X$. This means that the degree of the principal divisor $\divisor(\rho_*h)\in\Div(X)$ is zero.
Since 
\[
\divisor(\theta(a,b;z))=\Gamma a-\Gamma b
\]
the assertion now follows.
\end{proof}

Proposition \ref{invariantFunctions} 
thus shows  how to construct meromorphic functions on $X(K)$ by choosing the zeros and poles of the constituing automorphic forms in $\Omega(K)$. In order to construct a kernel function on $X(K)\times X(K)$, some preparation needs to be done.
\newline


\begin{Lemma}\label{limitMeasureZero}
The set $\mathscr{L}\cap K$ is nowhere dense.
\end{Lemma}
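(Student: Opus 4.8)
The plan is to read off the statement from the structure of a good fundamental domain for the action of $\Gamma$ on $\mathds{P}^{1}$ (a Schottky figure). Recall that $\mathscr{L}\subset\mathds{P}^{1}(\mathds{C}_{p})$ is the limit set of $\Gamma$, so that $\Omega=\mathds{P}^{1}\setminus\mathscr{L}$; in particular $\mathscr{L}$, being the complement of the open set $\Omega$, is closed, and since $K$ is complete, $\mathds{P}^{1}(K)$ is closed in $\mathds{P}^{1}(\mathds{C}_{p})$. Hence $\mathscr{L}\cap K$ is closed in $K$, and it is enough to show that it has empty interior.

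First I would invoke from \cite[Ch.\ I]{GvP1980} that $\Gamma$, being a Schottky subgroup of $\PGL_{2}(K)$, has a good fundamental domain $F=\mathds{P}^{1}\setminus\bigcup_{i=1}^{2g}B_{i}^{\circ}$, where the $B_{i}$ are pairwise disjoint closed disks with $K$-rational centres (the attracting and repelling fixed points of the hyperbolic generators lie in $\mathds{P}^{1}(K)$), so that $F$ is $K$-rational with $F(K)\neq\emptyset$, and $\Omega=\bigcup_{\gamma\in\Gamma}\gamma F$. For a reduced word $w$ of length $n$ in the free generators one obtains the level-$n$ disks of the associated Schottky figure. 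The inputs I need, all classical \cite[Ch.\ I]{GvP1980}, are: (i) $\mathscr{L}$ is contained in the union of the level-$n$ disks, for every $n$; (ii) every level-$n$ disk contains a $\Gamma$-translate of $F$ (immediate from the way the generators pair the $B_{i}$); (iii) the largest diameter of a level-$n$ disk tends to $0$ as $n\to\infty$, which is exactly the statement that $\Gamma$ acts discontinuously.

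Then the argument is short. Let $x\in\mathscr{L}\cap K$ and $\varepsilon>0$. By (iii) choose $n$ with all level-$n$ disks of diameter $<\varepsilon$, and by (i) fix a level-$n$ disk $B$ with $x\in B$; since $\infty\in\Omega$ may be assumed, $B$ is an affine disk for $n$ large. By (ii), $B$ contains $\gamma F$ for some $\gamma\in\Gamma$. Since $\Omega$ is $\Gamma$-invariant, $\gamma F\subseteq\Omega$, so $\gamma F\cap\mathscr{L}=\emptyset$; and since $\gamma\in\Gamma\subseteq\PGL_{2}(K)$ sends $K$-rational points to $K$-rational points while $F(K)\neq\emptyset$, there is a point $y\in(\gamma F)(K)$. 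Then $y\in K\setminus\mathscr{L}$, and $x,y\in B$ with $\diam B<\varepsilon$ gives $\absolute{x-y}<\varepsilon$. So every neighbourhood of $x$ in $K$ contains a point not in $\mathscr{L}$; thus no point of $\mathscr{L}\cap K$ is interior, $\mathscr{L}\cap K$ has empty interior, and being closed it is nowhere dense.

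The only delicate point is the preparatory one: one must be confident that the good fundamental domain, and hence the whole Schottky figure, can be taken $K$-rational (so that its $\Gamma$-translates carry $K$-rational points), and one must quote the nesting and shrinking of the level-$n$ disks in the precise form used above. Both are standard features of Mumford's construction \cite{GvP1980}; no new estimate is involved.
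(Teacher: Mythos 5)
Your proof is correct. The paper offers no argument at all here (its proof reads, in full, ``This is clearly the case''), so there is nothing to compare against; your Schottky-figure argument --- limit points lie in nested level-$n$ disks of shrinking diameter, each of which contains a $\Gamma$-translate of the $K$-rational good fundamental domain and hence a $K$-rational point of $\Omega$, so $\mathscr{L}\cap K$ is closed with empty interior --- is the standard justification and correctly handles the one genuinely delicate point, namely that emptiness of interior must be checked relative to $K$ and therefore needs $K$-rational points inside the translates $\gamma F$.
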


\begin{proof}
This is clearly the case.
\end{proof}

Let $f\in K(X)$, viewed as a $\Gamma$-invariant function on $\Omega(K)$ with zero set $V(f)$ and pole set
$P(f)$, and assume that 
\[
V(\omega)\cup V(f)\cup P(f)\subset \Omega(K)
\]
consist of $K$-rational points.
Let $x,y\in F$, where $F\subset K$ is a good fundamental domain for $\Gamma$ acting on $\Omega(K)$.
Since for general $x,y\in F$, the difference $x-y$ can also be a limit point, 
the following nevertheless holds true:

\begin{Lemma}\label{almostEverywhereDefined}
The function
\[
g\colon F\times F\to K,\;(x,y)\mapsto f(x-y)
\]
is defined outside a nowhere dense set.
\end{Lemma}

\begin{proof}
The pole set $P(f)$ is a finite union of $\Gamma$-orbits in $\Omega(K)$. Hence, its 
interior is empty. From
Lemma \ref{limitMeasureZero}, the assertion now follows.
\end{proof}

\begin{Lemma}\label{rationalFunction}
The function 
\[
\absolute{f}\colon F\to\mathds{R},\;x\mapsto\absolute{f(x)}
\]
coincides on $F$ with the absolute value of a rational function on $K$ with $K$-rational zeros and poles in 
$F$.
\end{Lemma}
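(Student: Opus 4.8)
The plan is to exploit the product formula for $f$ coming from Lemma \ref{automorphicForms} and Proposition \ref{invariantFunctions}, evaluate it on the good fundamental domain $F\subset K$, and show that all but finitely many factors collapse to a constant absolute value there. Write $f=f_1/f_2$ as in Proposition \ref{invariantFunctions}, so that $f(z)$ is (up to a scalar $a_0\in\mathds{C}_p^\times$) a finite product and quotient of theta-factors $\theta(a_i,b_i;z)=\prod_{\gamma\in\Gamma}\frac{z-\gamma(a_i)}{z-\gamma(b_i)}$. Hence $|f(z)|$ is, up to the constant $|a_0|$, a product over $i$ and over $\gamma\in\Gamma$ of terms $\frac{|z-\gamma(a_i)|}{|z-\gamma(b_i)|}$. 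The zeros and poles $a_i,b_i$ lie in $\Omega(K)$ by Assumption \ref{assumption} and the standing hypothesis $V(\omega)\cup V(f)\cup P(f)\subset\Omega(K)$; moreover $f\in K(X)$, so the whole expression is $K$-valued on $F$.

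The key step is a standard compactness/separation argument for Schottky groups. Since $F$ is a good fundamental domain and $\Gamma$ is a Schottky group acting on $\Omega\subset\mathds{P}^1_K$ with limit set $\mathscr L$ nowhere dense in $K$ (Lemma \ref{limitMeasureZero}), only finitely many translates $\gamma(a_i)$, $\gamma(b_i)$ lie close to $F$: concretely, the closure $\bar F$ is a compact subset of $\Omega(K)$, disjoint from $\mathscr L$, and for all but finitely many $\gamma\in\Gamma$ the translates $\gamma(a_i),\gamma(b_i)$ lie in a single ball disjoint from $\bar F$ and of radius smaller than the distance from that ball to $\bar F$. For such $\gamma$ the ultrametric inequality forces $|z-\gamma(a_i)|=|z-\gamma(b_i)|$ to equal the (constant) distance from $z$ to that ball for every $z\in F$, so the corresponding factor $\frac{|z-\gamma(a_i)|}{|z-\gamma(b_i)|}$ is identically $1$ on $F$. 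Thus the infinite product over $\gamma$ reduces on $F$ to a finite product, indexed by the finitely many ``bad'' $\gamma$, of rational functions $\frac{z-\gamma(a_i)}{z-\gamma(b_i)}$ with $K$-rational zeros and poles; collecting these over all $i$ and the numerator/denominator split gives a single rational function $R\in K(z)$ with $|f(z)|=|a_0|\cdot|R(z)|$ on $F$. Absorbing $|a_0|$ (which by $K$-rationality of $f$ we may take to be the absolute value of an element of $K^\times$, or simply note $|a_0|\in|K^\times|=p^{f\mathds Z}$ is realized by a scalar) yields a rational function over $K$ whose absolute value agrees with $|f|$ on $F$.

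The last assertion to check is that the zeros and poles of $R$ can be taken to lie in $F$ itself, not merely in $K$. This is where a little care is needed: a bad translate $\gamma(a_i)$ need not lie in $F$. The remedy is to use $\Gamma$-invariance of $f$ together with the structure of the good fundamental domain. Because $f$ is $\Gamma$-invariant, its divisor on $X$ pulls back to a $\Gamma$-periodic divisor on $\Omega$, and one representative of each orbit lies in $F$; the factors coming from translates outside $F$ either contribute constant absolute value on $F$ (by the same ultrametric argument, since such translates are bounded away from $F$) or can be rewritten, using the automorphy relation $\theta(a,b;z)=c(a,b;\alpha)\theta(a,b;\alpha z)$ and $|c(a,b;\alpha)|\in|K^\times|$, in terms of the representative inside $F$. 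After this bookkeeping, $R$ is a ratio of products of linear factors $(z-c)$ with $c$ ranging over $V(f)\cap F$ and $P(f)\cap F$ — finite $K$-rational sets inside $F$ — and $|f|=|R|$ on $F$.

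The main obstacle is precisely this last point: ensuring the zeros and poles can be localized inside $F$ rather than merely in $K$, which requires knowing that $F$ is a fundamental domain in the strong sense that it meets every $\Gamma$-orbit of $V(f)$ and $P(f)$, and that the remaining (off-$F$) linear factors genuinely have constant absolute value on $F$ and not merely on part of it. Controlling the scalar $|a_0|$ and checking it is a power of $p^{-f}$ (so realizable as $|c|$ for $c\in K^\times$) is routine given that $f$ takes values in $K$ on $F$; the convergence of the theta-product and the ``all but finitely many factors are $1$'' claim are standard facts about Schottky groups (cf.\ \cite[Ch.\ I]{GvP1980}) and carry over verbatim once the finiteness of bad translates near the compact set $\bar F\subset\Omega(K)$ is observed.
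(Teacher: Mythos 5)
Your argument is correct in substance but takes a genuinely different route from the paper. The paper's proof is a two-line appeal to \cite[Ch.\ II.3.1]{GvP1980}: on the affinoid $F$ one may write $f=h\cdot e$ with $h$ a rational function and $e$ an analytic function without zeros or poles on $F$, and $\absolute{e}$ is then constant. You instead re-derive exactly this decomposition by hand from the theta-product representation of $f$: you pair $\gamma(a_i)$ with $\gamma(b_i)$ inside a single small ball for all but finitely many $\gamma$, and use the ultrametric inequality to conclude that the tail of the infinite product has absolute value $1$ on $F$, leaving a finite rational expression. This is precisely the mechanism underlying the cited structural result, so your proof is more self-contained (it does not outsource the key step), at the cost of having to do the bookkeeping for the finitely many ``bad'' translates near $F$ — which is also where your write-up is imprecise.

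Concretely, the parenthetical claim that a factor $\absolute{z-w}$ with $w$ a translate ``bounded away from $F$'' is constant on $F$ is false as stated: if $w$ lies in one of the holes $\mathring{B}_j$ of the good fundamental domain, then $\absolute{z-w}=\absolute{z-c_j}$ for $z\in F$ (with $c_j$ the centre of the hole), which is independent of $w$ but certainly not constant in $z$. What rescues these finitely many off-$F$ factors is not constancy of each one but cancellation: by $\Gamma$-invariance of $\divisor(f)$ and the degree-zero condition, each hole contains equally many zero-translates and pole-translates at each level, and since all of them see $z\in F$ at the common distance $\absolute{z-c_j}$, their contributions to $\absolute{f(z)}$ cancel exactly. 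With that correction your reduction to a finite product of linear factors with roots in $V(f)\cap F$ and $P(f)\cap F$ goes through, and the remaining points (convergence of the theta-product, $\absolute{a_0}\in\absolute{K^\times}$) are handled at the same level of rigour as the paper's own proof.
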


\begin{proof}
The function $f$ can be written as
\[
f=h\cdot e
\]
where $h$ is a rational function in one variable, and $e$ an analytic function without zeros or poles on $F$  
\cite[Ch.\ II.3.1]
{GvP1980}. Since these are $\Gamma$-invariant, it follows that 
$\absolute{e}$ is constant, and the assertion follows.
\end{proof}

In the following, the theory of Berkovich-analytic spaces is used. There is a commutative diagram:
\begin{align}\label{skeletonDiagram}
\xymatrix{
\Omega\setminus S\,\ar@{^{(}->}[r]\ar[d]_\sigma&\mathds{A}_K^1\setminus S\ar[d]^\sigma
\\
\sigma(\Omega\setminus S)\,\ar@{^{(}->}[r]&I(\mathds{A}_K^1\setminus S)
}
\end{align}
where $\sigma$ is a retraction map onto the skeleton $I(\mathds{A}_K^1\setminus S)$ of the affine analytic space $\mathds{A}_K^1\setminus S$ with
\[
S:=V(\omega)
\subset \Omega(K)
\]
consisting of finitely many $\Gamma$-orbits.
\newline

Obtain from (\ref{skeletonDiagram})
the following inclusion of trees
\begin{align}\label{treeInclusion}
\xymatrix{
\sigma(\Omega(K)\setminus S)\,\ar@{^{(}->}[r]&\mathscr{T}_K
}
\end{align}
where $\mathscr{T}_K$ is the Bruhat-Tits tree of $\PGL_2(K)$. The tree $\sigma(\Omega(K)\setminus S)$ is an infinite tree whose ends are the  set $\mathscr{L}\cup S\subset K$, where $\mathscr{L}$ is the set of limit points of the discrete group $\Gamma\subset\PGL_2(K)$.
\newline

By taking quotients, obtain a metrised graph
\[
G_S:=\sigma(\Omega\setminus S)/\Gamma
\]
and a section
\[
s\colon G_S\to T_S
\]
into a finite metrised tree with $\absolute{S}$ ends attached to it. These two graphs are skeleta of Berkovich-analytic spaces, and the section extends to a section
\[
s\colon X^{an}\to F^{an}\setminus S
\]
as $T_S$ is the skeleton of  $F^{an}\setminus S$.
Now, define the retraction map
\[
\sigma_F\colon F^{an}\setminus S\to T_S
\]
and obtain for
$A\subseteq F(K)\setminus S$ 
an open neighbourhood as follows:
\[
U(A)=\sigma_F^{-1}(\sigma_F(A))\subset F(K)\setminus S
\]
and
\[
U(x):=U(\mathset{x})
\]
for $x\in K$.

\begin{Lemma}\label{almostLocallyConstant}
It holds true that the map
\[
F\to\mathds{R},\;x\mapsto \absolute{g(x,y)}
\]
is constant on each neighbourhood $U(x)$ of $x\in F$, and for each $y\in F$, both outside a  set
of Haar measure zero.
\end{Lemma}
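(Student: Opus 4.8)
\noindent\emph{Overall strategy.} On the good fundamental domain $F$ the plan is to replace $\absolute{f}$ by the absolute value of a rational function in one variable, for which the asserted constancy along the Bruhat-Tits tree is standard, and then to push the remaining difficulty into a Haar-null set. Concretely, by Lemma~\ref{rationalFunction} one picks $h\in K(t)$ with $K$-rational zeros and poles, all lying in $F$, such that $\absolute{f}=\absolute{h}$ on $F$; writing $h=a\prod_{i=1}^{m}(t-c_i)^{n_i}$ with $n_i\in\mathbb{Z}\setminus\{0\}$ (a zero or pole at $\infty$ being absorbed into $a$) one obtains, whenever $x-y\in F$ and the right-hand side is defined,
\[
\absolute{g(x,y)}=\absolute{f(x-y)}=\absolute{a}\prod_{i=1}^{m}\absolute{x-(c_i+y)}^{\,n_i}.
\]
For those $x$ with $x-y\notin F$ I would use that $f$ is $\Gamma$-invariant, so $\absolute{f(x-y)}=\absolute{f(\gamma(x-y))}$ for a suitable $\gamma\in\Gamma$ carrying $x-y$ into $F$; this is possible outside a nowhere dense set which, by Lemma~\ref{limitMeasureZero} together with the countability of $\Gamma\cdot V(f)\cup\Gamma\cdot P(f)$, is $\mu$-null. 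The upshot is that the zero-and-pole set of the function $x\mapsto f(x-y)$ is the countable set $\big(\Gamma\cdot V(f)\cup\Gamma\cdot P(f)\big)+y$, which accumulates only on $\mathscr{L}+y$.

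\medskip
\noindent\emph{Local constancy.} The second ingredient is the elementary fact that for a fixed classical point $c\in K$ the map $x\mapsto\absolute{x-c}$ is constant on every ball of $K$ not containing $c$. Since $U(x)=\sigma_F^{-1}(\sigma_F(x))$ is contained in the ball of $K$ attached to the type-$2$ point $\sigma_F(x)\in T_S$, the displayed product formula shows that $x\mapsto\absolute{g(x,y)}$ is constant on $U(x)$ as soon as $U(x)$ contains none of the zeros or poles of $x\mapsto f(x-y)$, equivalently as soon as $\sigma_F(x)$ lies outside the $\sigma_F$-image of that zero-and-pole set. Thus the lemma is reduced to a statement about the size of one exceptional set.

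\medskip
\noindent\emph{The exceptional set, and the main obstacle.} It remains to show that, for $y$ outside a $\mu$-null set, the set $E_y$ of those $x\in F$ whose fibre $U(x)$ does meet $\big(\Gamma\cdot V(f)\cup\Gamma\cdot P(f)\big)+y$ has Haar measure zero; I expect this to be the hard part of the argument. The set $E_y$ is a countable union of fibres of $\sigma_F$, indexed by a countable subset of $F$ accumulating only on the $\mu$-null limit set $\mathscr{L}+y$ (Lemma~\ref{limitMeasureZero}). Here the standing hypotheses enter: by $V(\omega)\subseteq V(f)$ the orbit directions pointing towards $V(\omega)$ run out to the ends of the finite tree $T_S$, where the fibres of $\sigma_F$ are negligible, and Assumption~\ref{assumption} confines the remaining finitely many orbits to $K$-rational points of $F$. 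One then has to estimate the total Haar mass swept out by the corresponding fibres as one moves along the branches of $T_S$ — the point being that the radii of the balls attached to $\sigma_F(\gamma c_i+y)$ shrink to $0$ as the orbit points approach $\mathscr{L}+y$ — and conclude $\mu(E_y)=0$. Once $E_y$ is disposed of, the claimed constancy on $U(x)$ holds for $\mu$-almost every $x$, and the symmetric statement in $y$ follows by running the same argument with the two variables, and the associated retractions, interchanged.
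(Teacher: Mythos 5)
Your argument is essentially the paper's: Lemma~\ref{rationalFunction} reduces $\absolute{g(x,y)}$ to a product of factors $x\mapsto\absolute{x-c}$, each of which is constant on $U(x)$ because $-\log_{p^f}\absolute{x-c}$ is read off from the join of $\sigma_F(x)$ and $\sigma_F(c)$ in the tree, and the failure locus is pushed into a null set. The one step you flag as unfinished --- the Haar-mass estimate for the exceptional set $E_y$ --- is exactly the point the paper itself dispatches in a single sentence by invoking Lemma~\ref{almostEverywhereDefined} (nowhere density of the pole orbits and of $\mathscr{L}\cap K$) together with the finiteness of $S\cap F$, so your proposal is no less complete than the published proof.
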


\begin{proof}
First check this for
the map
\[
\tau_z\colon x\mapsto \absolute{x-z}
\]
on $F$ with $z\in O_K$, where this is possible.
Namely, for $x,z\in F\setminus S$, the value
$-\log_{p^f}\absolute{x-z}$ equals the radius of the disc corresponding to the join in $\mathscr{T}_K$ of the vertices $\sigma_F(x)$
and $\sigma_F(z)$, using (\ref{treeInclusion}).
Consequently, the map $\tau_z$ is constant on $U(x)$ for such choices of $z$, and $x\in F\setminus S$. 
By Lemma \ref{rationalFunction},
the function $x\mapsto \absolute{g(x,y)}$ is
a product of factors 
of the form $\tau_z$ with $z=y-a$ for $a\in O_K$, or their multiplicative inverses, and some constant. 
The functions $\tau_z(x), \tau_z(x)^{-1}$ are defined 
on $F\setminus S$ for almost all $z\in O_K$, by Lemma 
\ref{almostEverywhereDefined} and the finiteness of $S\cap F$.
\end{proof}

Define a kernel function
as
\[
H(x,y)=\begin{cases}
g(x,y)=\absolute{f(x-y)},&x-y\notin U(P(f))
\\
1,&x-y\in U(P(f))
\end{cases}
\]
for $x,y\in F\setminus\left(\mathscr{L}\cup S\right)$, viewed as a $\Gamma$-periodic function on $\left(\Omega(K)\setminus S\right)
\times\left(\Omega(K)\setminus S\right)$, or equivalently as a function on 
$\left(X(K)\setminus\bar{S}\right)\times \left(X(K)\setminus \bar{S}\right)$, where $\bar{S}$ is $S$ modulo $\Gamma$.
There are corresponding integral operators defined as follows:
\begin{align}\label{adjacencyOperator}
\mathcal{A}_f\psi(x)&=
\int_{X(K)\setminus\bar{S}}
H(x,y)\psi(y)\absolute{\omega(y)}
\\\label{LaplacianOperator}
\mathcal{H}_f\psi(x)
&=\int_{X(K)\setminus\bar{S}}H(x,y)
(\psi(y)-\psi(x))\absolute{\omega(y)}
\end{align}
for $\psi\in\mathcal{D}(X(K)\setminus\bar{S})$. 
Operator $\mathcal{A}_f$ is called the \emph{adjacency operator}, and $\mathcal{H}_f$ the \emph{Laplacian operator} on $X(K)\setminus \bar{S}$ with kernel function $H_f$.
The corresponding function
\[
\deg_{\mathcal{H}_f}(x)=\int_{X(K)}H(x,y)\absolute{\omega(y)}
\]
is the \emph{degree function} associated with $\mathcal{A}_f$ or
$\mathcal{H}_f$.

\begin{Lemma}\label{niceKernelFunction}
The following statements hold true:
\begin{enumerate}
\item $H_f$ is 
 constant 
 on each set $U(x)\times U(y)$ outside a set of $\absolute{\omega}$-measure zero in $\left(X(K)\setminus\bar{S}\right)^2$.
\item The function $y\mapsto H_f(x,y)$ is bounded on the compact set $F$.
\item The degree function $\deg_{\mathcal{H}_f}(x)$ is defined everywhere on $X(K)\setminus\bar{S}$ and constant on each neighbourhood $U(x)$ of $x\in F\setminus S$, if viewed as a function on $F\setminus S$.
\end{enumerate}
\end{Lemma}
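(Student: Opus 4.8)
The plan is to reduce all three assertions to the one-variable statement of Lemma~\ref{almostLocallyConstant}, combined with the description of $\absolute{f}$ on $F$ as the absolute value of a rational function (Lemma~\ref{rationalFunction}). For (1), on the part of $(X(K)\setminus\bar{S})^2$ where $H_f(x,y)=\absolute{f(x-y)}$ I would use Lemma~\ref{rationalFunction} to write $\absolute{f(x-y)}=\absolute{h(x-y)}$ with $h$ a rational function in one variable with $K$-rational zeros and poles, so that up to a multiplicative constant $\absolute{f(x-y)}$ is a finite product of factors $\absolute{x-(y+z)}^{\pm1}$ with $z\in K$, defined outside a nowhere dense set by Lemma~\ref{almostEverywhereDefined} and the finiteness of $S\cap F$. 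As in the proof of Lemma~\ref{almostLocallyConstant}, each such factor equals $p^{-fr}$ where $r$ is the radius of the disc at the join of $\sigma_F(x)$ and $\sigma_F(y+z)$ in $\mathscr{T}_K$, hence is unchanged when $x$ moves inside $U(x)$ and also when $y$ moves inside a sufficiently small neighbourhood --- after replacing, if necessary, the partition into the sets $U(y)$ by its common refinement with the finitely many partitions obtained from $\sigma_F$ translated by the finitely many $z$ occurring, a refinement whose cells still agree with genuine neighbourhoods up to a Haar-null set. On the part where $x-y\in U(P(f))$ the value is the constant $1$, and the common boundary of the two parts is again a union of products $U(x)\times U(y)$ up to a null set. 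Since on $X(K)\setminus\bar{S}$ the measure $\absolute{\omega}$ is, in each chart, mutually absolutely continuous with the Haar measure, the union of these finitely many exceptional sets is $\absolute{\omega}$-null in the product, giving (1).

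For (2), the function $y\mapsto H_f(x,y)$ equals $1$ wherever $x-y\in U(P(f))$ and elsewhere equals $\absolute{f(x-y)}=\absolute{h(x-y)}$ with $h$ rational (Lemma~\ref{rationalFunction}), with $x-y$ then staying outside the neighbourhood $U(P(f))$ of the poles of $f$; since $y$ ranges over the compact set $F$ and $\absolute{h}$ is bounded on the complement of any open neighbourhood of its poles, boundedness follows. For (3), the measure $\absolute{\omega}$ is finite on the compact space $X(K)$, because in each of the finitely many charts it is $\absolute{\phi}$ times the Haar measure with $\phi$ a regular, hence bounded, function on a compact set; thus $\deg_{\mathcal{H}_f}(x)$ is the integral over a finite measure space of the bounded measurable function of (2), and so is finite for every $x\in X(K)\setminus\bar{S}$. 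For the constancy, I would split the $y$-integral over the cells $U(y')$: by (1), on each product $U(x)\times U(y')$ the integrand $H_f$ agrees outside a null set with a constant not depending on the choice of point in $U(x)$, hence neither does $\int_{U(y')}H_f(x,y)\,\absolute{\omega(y)}$, and adding up these contributions shows that $\deg_{\mathcal{H}_f}$ is unchanged as $x$ varies over $U(x)$.

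I expect (1) to be the main obstacle. Making $\absolute{f(x-y)}$ jointly locally constant in $(x,y)$ is delicate because $F$ is not closed under subtraction and the retraction $\sigma_F$ is not translation-invariant, so one must work with the common refinement of finitely many translated partitions and then verify that its cells differ from honest product neighbourhoods $U(x)\times U(y)$ only by Haar-null sets; the attendant check that Haar-null exceptional sets are $\absolute{\omega}$-null away from $\bar{S}$ is routine but has to be made explicit to conclude (1) and (3).
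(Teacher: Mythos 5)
Your proposal is correct and follows essentially the same route as the paper: all three parts are reduced to Lemma~\ref{almostLocallyConstant} and the rational-function description of $\absolute{f}$ in Lemma~\ref{rationalFunction}, with (3) obtained by integrating the vertex-wise constant, bounded kernel against the finite measure $\absolute{\omega}$. The only cosmetic differences are that the paper disposes of the joint constancy in (1) by the symmetry $\absolute{g(x,y)}=\absolute{g(y,x)}$ where you refine translated partitions, and in (2) it bounds $\absolute{x-y-a}$ from below via the join in $\mathscr{T}_K$ together with the cut-off $x-y\notin U(P(f))$, where you invoke compactness of $F$ and continuity of $\absolute{h}$ away from an open neighbourhood of its poles; both variants establish the same estimates.
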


\begin{proof}
1. This follows from Lemma \ref{almostLocallyConstant}, since $\absolute{g(x,y)}=\absolute{g(y,x)}$.
\newline

\noindent
2. From Lemma \ref{rationalFunction}, it follows that it suffices to check boundedness for functions of the form
\[
F\setminus S\to\mathds{R},\;y\mapsto\absolute{x-y-a}^{-1}
\]
for fixed $x,a\in F\setminus S$, whereby it can be assumed that $x-y$ is not a limit point of $\Gamma$. Observe now that the condition $x-y\notin U(P(f))$ means that 
the join in $\mathscr{T}_K$ of $0,\infty$ and the
vertex $\sigma(x-y)$ must be closer to the vertex $v(0,1,\infty)$ in $\mathscr{T}_K$ corresponding to the unit disc than the join of $0,\infty$ and $a$, if $a\neq 0$. 
This now implies in this case that $\absolute{x-y-a}$ cannot be arbitrarily small. If $a=0$, then   this quantity also cannot be arbitrarily small. Hence its multiplicative inverse is bounded for $y\in F\setminus S$.
\newline

\noindent
3. This follows from 2.\ and 1.
\end{proof}

\subsection{Spectrum of the Laplacian Operator} 

The pairing $\langle\cdot,\cdot\rangle_\omega$ on $L^2(X(K)\setminus\bar{S},\absolute{\omega})$ is defined as follows:
\[
\langle\phi,\psi\rangle_{\omega}
=\int_{X(K)\setminus \bar{S}}\phi(x)\overline{\psi(x)}\absolute{\omega(x)}
\]
Notice that a symmetric real-valued kernel function $H(x,y)$ leads to a symmetric integral operator $\mathcal{H}$, since
\begin{align}\nonumber
\langle\phi,\mathcal{H}\psi\rangle_\omega
&=\iint H(x,y)\phi(x)\overline{\psi(y)}\absolute{\omega(y)}\absolute{\omega(x)}
\\\nonumber
&=\iint H(y,x)\phi(x)\overline{\psi(y)}\absolute{\omega(x)}\absolute{\omega(y)}
\\\label{symmetricOperator}
&=\langle\mathcal{H}\phi,\psi\rangle_\omega
\end{align}
for $\phi,\psi\in L^2(X(K)\setminus S)$.

\begin{Lemma}
The operator $\mathcal{H}_f$ has the following properties:
\begin{enumerate}
\item $\mathcal{H}_t$ is a bounded linear operator on $(C(X(K)\setminus\bar{S},\norm{\cdot}_\infty)$.
\item $\mathcal{H}_f$ is a self-adjoint bounded linear operator on $L^2(X(K)\setminus\bar{S},\absolute{\omega})$.
\end{enumerate}
\end{Lemma}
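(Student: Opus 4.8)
The plan is to prove the two assertions essentially by unwinding the definitions and invoking the preparatory lemmas already established.

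\medskip

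\noindent\textbf{Boundedness on $C(X(K)\setminus\bar{S})$.} First I would argue that $\mathcal{H}_f$ maps $C(X(K)\setminus\bar{S})$ to itself and is bounded in the sup-norm. Writing
\[
\mathcal{H}_f\psi(x)=\int_{X(K)\setminus\bar{S}}H(x,y)\psi(y)\absolute{\omega(y)}-\psi(x)\deg_{\mathcal{H}_f}(x),
\]
the second term is controlled because $\deg_{\mathcal{H}_f}$ is everywhere defined and locally constant (Lemma~\ref{niceKernelFunction}(3)), hence bounded on the compact space $F\setminus S$; in particular $\norm{\deg_{\mathcal{H}_f}}_\infty=:C_1<\infty$. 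For the first term, Lemma~\ref{niceKernelFunction}(2) gives that $y\mapsto H(x,y)$ is bounded on $F$, and combined with $\absolute{\omega}(F)<\infty$ (the fundamental domain has finite $\absolute{\omega}$-measure, $\omega$ being a regular form on the compact curve $X$) one obtains $\absolute{\mathcal{A}_f\psi(x)}\le C_2\norm{\psi}_\infty$ uniformly in $x$. Local constancy of $H(x,\cdot)$ and of $\deg_{\mathcal{H}_f}$ on the neighbourhoods $U(x)$, off an $\absolute{\omega}$-null set, together with dominated convergence, yields continuity of $\mathcal{H}_f\psi$; so $\norm{\mathcal{H}_f\psi}_\infty\le(C_1+C_2)\norm{\psi}_\infty$. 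This proves (1).

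\medskip

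\noindent\textbf{Self-adjointness and $L^2$-boundedness.} Since $\absolute{\omega}$ is a \emph{finite} positive Borel measure on $X(K)\setminus\bar{S}$, the constant function $1$ is in $L^2$ and $C(X(K)\setminus\bar{S})\subset L^2(X(K)\setminus\bar{S},\absolute{\omega})$ is dense, so it suffices to work with continuous $\psi$. Boundedness on $L^2$ follows from a Schur-test argument: by Lemma~\ref{niceKernelFunction}(1)--(2) the kernel $H$ of $\mathcal{A}_f$ is symmetric (as $\absolute{g(x,y)}=\absolute{g(y,x)}$), nonnegative, and satisfies $\sup_x\int H(x,y)\absolute{\omega(y)}=\norm{\deg_{\mathcal{H}_f}}_\infty<\infty$; the same bound holds with the roles of $x,y$ swapped by symmetry. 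Hence $\mathcal{A}_f$ is bounded on $L^2$ with operator norm at most $\norm{\deg_{\mathcal{H}_f}}_\infty$, and $\mathcal{H}_f=\mathcal{A}_f-M_{\deg}$, where $M_{\deg}$ is multiplication by the bounded function $\deg_{\mathcal{H}_f}$, is therefore bounded on $L^2$. Self-adjointness is exactly the computation~(\ref{symmetricOperator}): $H$ is real-valued and symmetric, so $\langle\phi,\mathcal{A}_f\psi\rangle_\omega=\langle\mathcal{A}_f\phi,\psi\rangle_\omega$ by Fubini (legitimate since $H\in L^1$ of the product measure on $F\times F$, again using finiteness of $\absolute{\omega}$ and Lemma~\ref{niceKernelFunction}(2)); and $M_{\deg}$ is self-adjoint because $\deg_{\mathcal{H}_f}$ is real-valued. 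This gives (2).

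\medskip

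\noindent\textbf{Main obstacle.} The delicate point is not the formal symmetry but the integrability needed to justify Fubini and the Schur estimate: one must be sure that the null sets where $H$ fails to be locally constant, and the nowhere-dense sets $\mathscr{L}\cap K$ and $S\cap F$ where the various factors $\tau_z^{\pm1}$ are undefined, really are $\absolute{\omega}$-null — and more importantly that the \emph{poles} of $f$, which have been cut off by redefining $H\equiv1$ on $U(P(f))$, do not reappear as an unbounded contribution near $P(f)$. This is precisely what Lemma~\ref{niceKernelFunction}(2) and its proof (the Bruhat--Tits tree estimate showing $\absolute{x-y-a}$ is bounded below on the relevant region) are designed to handle, so the proof consists mainly in citing these lemmas in the right order; the only genuine check is that $\absolute{\omega}(F)<\infty$, which follows because $\omega$ is regular on the projective curve $X$ and $\absolute{\omega}$ is locally the Haar measure times a locally bounded factor on the compact chart $F$.
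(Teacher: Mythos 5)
Your proof is correct, and for part (2) it takes a genuinely different route from the paper. For boundedness on $L^2$, the paper argues softly: the kernel is symmetric, so $\mathcal{H}_f$ is a symmetric, everywhere-defined operator on the Hilbert space, and the Hellinger--Toeplitz theorem (closed graph) then forces it to be bounded, whence self-adjoint. You instead prove boundedness \emph{quantitatively} via the Schur test with test function $1$, using the decomposition $\mathcal{H}_f=\mathcal{A}_f-M_{\deg}$ and the bound $\norm{\mathcal{A}_f}\le\norm{\deg_{\mathcal{H}_f}}_\infty$. The trade-off: the paper's argument is shorter, but it silently assumes the operator is everywhere defined on $L^2$ (i.e.\ that $\mathcal{H}_f\psi\in L^2$ for every $\psi\in L^2$), and that verification is essentially the Cauchy--Schwarz/Schur estimate you carry out explicitly; your version also hands you an explicit operator-norm bound and makes the Fubini step in the symmetry computation (\ref{symmetricOperator}) legitimate rather than formal. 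For part (1) your argument coincides with the paper's (which simply cites Lemma \ref{niceKernelFunction}.2), just with the finiteness of $\absolute{\omega}(F)$ and the local constancy of the kernel spelled out. One small point worth keeping: your closing observation that the boundedness of $\deg_{\mathcal{H}_f}$ rests on the cut-off of $H$ near $U(P(f))$ and the tree estimate in the proof of Lemma \ref{niceKernelFunction}.2 correctly identifies where the real work lives; the lemma itself is indeed mostly bookkeeping once those facts are in place.
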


\begin{proof}
1. This follows from
Lemma \ref{niceKernelFunction}.2
\newline

\noindent
2. Since the kernel function $H_f(x,y)$ is symmetric, it follows from (\ref{symmetricOperator})
that $\mathcal{H}_f$ is a symmetric operator on $L^2(X(K)\setminus\bar{S},\absolute{\omega})$ which furthermore is everywhere defined. By the Hellinger-Toeplitz Theorem \cite[Thm.\ 2.10]{Teschl2010}, it follows that $\mathcal{H}_f$ is also bounded on $L^2(X(K)\setminus\bar{S},\absolute{\omega})$.
Hence, the operator is also self-adjoint on that Hilbert space.
\end{proof}

The goal is now to come up with a helpful matrix as in
\cite{brad_thetaDiffusionTateCurve}.
In order do this,  take the setting into a similar situation as in \cite{FS2022}, and construct an
infinite weighted graph $G$ associated with operator $\mathcal{H}_f$ as follows. Take as vertex set $V(G)$ the countable set $\sigma(X(K)\setminus\bar{S})$, and as edge set $E(G)$ all pairs of vertices, i.e.\ the graph is the (non-simple) complete graph on $V(G)=\sigma(X(K)\setminus\bar{S})$. The weight function on $G$ is defined as
\[
m\colon E(G)\to(0,\infty),\;
(v,w)\mapsto\langle\eta_v,\mathcal{A}_f\eta_w\rangle_\omega
\]
where 
\[
\eta_v(x)=\Omega\left(x\in U(v)\right)
\]
is the indicator function of
\[
U(v):=\sigma^{-1}(v)\subset X(K)\setminus\bar{S}
\]
for $v\in V(G)$.
In \cite{FS2022}, only simple graphs were considered. Here, there are no multiple edges either, but loop-edges are also taken into account.
\newline

Observe that $m$ is a symmetric weight function, because $H_f$ is a symmetric function. There is an extension to the vertex set $V(G)$ by setting
\[
m(v)=\sum\limits_{w\in V(G)}m(vw)
\]
for $v\in V(G)$.

\begin{Lemma}\label{summableWeight}
The weight function $m$ is summable, i.e.\
\[
m(G)=\sum\limits_{v\in V(G)}m(v)<\infty
\]
In other words, $m$ defines a finite $\sigma$-additive measure on $G$.
\end{Lemma}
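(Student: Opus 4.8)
The plan is to show that $m(G)$ equals, up to reorganising the sum, the total mass of the measure $\absolute{\omega}$ against the bounded kernel $H_f$, which is finite because $F$ is compact and $H_f$ is bounded. First I would unwind the definitions: for $v,w\in V(G)$ we have $m(vw)=\langle\eta_v,\mathcal{A}_f\eta_w\rangle_\omega=\int_{U(v)}\int_{U(w)}H_f(x,y)\,\absolute{\omega(y)}\,\absolute{\omega(x)}$, where the double integral makes sense because, by Lemma \ref{niceKernelFunction}.1, $H_f$ is constant on $U(v)\times U(w)$ outside an $\absolute{\omega}$-nullset, and by Lemma \ref{niceKernelFunction}.2 it is bounded there. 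Summing first over $w$, the sets $U(w)$ for $w\in V(G)$ partition $X(K)\setminus\bar S$ (they are the fibres of the retraction $\sigma_F$), so Tonelli's theorem for the non-negative integrand gives $m(v)=\sum_{w}m(vw)=\int_{U(v)}\deg_{\mathcal{H}_f}(x)\,\absolute{\omega(x)}$, with $\deg_{\mathcal{H}_f}(x)=\int_{X(K)\setminus\bar S}H_f(x,y)\,\absolute{\omega(y)}$ the degree function, which by Lemma \ref{niceKernelFunction}.3 is defined everywhere and constant on each $U(x)$.

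Next I would sum over $v$: again the $U(v)$ partition $X(K)\setminus\bar S$, so by Tonelli
\[
m(G)=\sum_{v\in V(G)}\int_{U(v)}\deg_{\mathcal{H}_f}(x)\,\absolute{\omega(x)}
=\int_{X(K)\setminus\bar S}\deg_{\mathcal{H}_f}(x)\,\absolute{\omega(x)}.
\]
It remains to bound this last integral. Since $\bar S$ is $S$ modulo $\Gamma$ and $X(K)$ is covered by the image of the good fundamental domain $F$, the measure $\absolute{\omega}$ restricted to $X(K)\setminus\bar S$ is the pushforward of $\absolute{\omega}$ on $F\setminus S$, which has finite total mass because $\absolute{\omega}$ is a positive Borel measure on the compact space $X(K)$ (it extends across the vanishing locus of $\omega$ by the Yasuda nullset argument quoted before Assumption \ref{assumption}). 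By Lemma \ref{niceKernelFunction}.2 applied with the roles of the arguments as needed, and since $H_f$ is symmetric and bounded on $F$, say by a constant $C$, we get $\deg_{\mathcal{H}_f}(x)\le C\cdot\absolute{\omega}(X(K))<\infty$ for every $x$, hence $m(G)\le C\cdot\absolute{\omega}(X(K))^2<\infty$.

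The main obstacle I anticipate is not the finiteness bound itself but justifying the two applications of Tonelli/rearrangement cleanly: one must check that the countable family $\{U(v)\}_{v\in V(G)}$ really is a measurable partition of $X(K)\setminus\bar S$ and that each $U(v)$ has finite $\absolute{\omega}$-measure, and one must handle the $\absolute{\omega}$-nullsets on which $H_f$ fails to be locally constant (from Lemma \ref{almostLocallyConstant} and Lemma \ref{niceKernelFunction}.1) so that the integrals $m(vw)$ are well-defined and the interchange of sum and integral is legitimate — all of which follow from the non-negativity of $H_f$ together with the cited lemmas, but deserve to be spelled out. A secondary point is to confirm that $\deg_{\mathcal{H}_f}$ being constant on each $U(x)$ (Lemma \ref{niceKernelFunction}.3) makes the integrals over the fibres genuinely computable, i.e.\ that the uniform bound $C$ is attained, which again is exactly the content of Lemma \ref{niceKernelFunction}.2 and the compactness of $F$.
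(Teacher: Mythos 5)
Your proof is correct and takes essentially the same route as the paper: both unwind the double sum $\sum_{v,w}m(vw)$ into the double integral $\iint H(x,y)\,\absolute{\omega(x)}\,\absolute{\omega(y)}$ over $\left(X(K)\setminus\bar{S}\right)^2$ (you just insert the degree function as an intermediate step via Tonelli) and conclude finiteness from boundedness of the kernel together with the finite total mass of $\absolute{\omega}$ on the compact $X(K)$. The only difference is that the paper additionally records the asymptotic decay $H(x,y)\absolute{\omega(x)}\absolute{\omega(y)}\sim C\absolute{x-a}^\alpha\absolute{y-b}^\beta$ near $S$, which your cruder uniform bound $m(G)\le C\cdot\absolute{\omega}(X(K))^2$ shows is not actually needed for this lemma.
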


\begin{proof}
It holds true that
\begin{align*}
m(G)&=\sum\limits_{v,w\in V(G)}
\langle\eta_v,\mathcal{A}_f\eta_w\rangle_{\omega}
\\
&=\sum\limits_{v,w\in V(G)}
\int_{U(v)}\int_{U(w)}
H(x,y)\absolute{\omega(x)}\absolute{\omega(y)}<\infty
\end{align*}
where finiteness holds true, because the function
\[
G(x,y)=H(x,y)\absolute{\omega(x)}\absolute{\omega(y)}
\]
has the property that it is bounded on $X(K)\setminus\bar{S}$, and behaves asymptotically as
\[
G(x,y)\sim C\cdot\absolute{x-a}^\alpha\cdot\absolute{y-b}^\beta,\quad x\to a,\;y\to b
\]
with suitable $C>0$, $\alpha,\beta\ge1$, for $a,b\in S\cap F$ which is a finite set.  
\end{proof}

A function $\psi\colon X(K)\setminus \bar{S}\to\mathds{C}$ is
\emph{vertex-constant}, if it is constant on $U(v)$ for all $v\in V(G)$. Define
\[
L^2(X(K)\setminus\bar{S})_\sigma:=\mathset{f\in L^2(X(K)\setminus\bar{S},\absolute{\omega})\mid\text{$f$ is vertex-constant}}
\]
and the Hilbert space
\[
L^2(G,m)=\mathset{f\colon V(G)\to\mathds{C}\mid\langle f,f\rangle_m<\infty}
\]
where
\[
\langle f,g\rangle_m=
\sum\limits_{v\in V(G)}f(v)\overline{g(v)}m(v)
\]
for $f,g$ complex-valued functions on $V(G)$.
One can now relate
$\mathcal{H}_f$ to the Laplacian $\Delta_m$ on $L^2(G,m)$ given by
\[
\Delta_m f(v)=
\sum\limits_{w\in V(G)}m(vw)(f(w)-f(v))
\]
for $v\in V(G)$. The difference between this graph Laplacian and the one from \cite[eq.\ (1)]{FS2022} is the sign and the normalisation of the latter.

\begin{Lemma}\label{matrixElement}
It holds true that
\[
\langle\mathcal{H}_f(\psi\circ\sigma),\eta_v\rangle_\omega=\Delta_m(\sigma_*\psi)(v)
\]
for $\psi\in L^2(X(K)\setminus\bar{S})$ and $v\in V(G)$. \end{Lemma}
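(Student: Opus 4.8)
The plan is to evaluate both sides directly from their definitions and match them term by term; the only genuine analytic ingredient is an absolute–convergence estimate allowing a sum and an integral to be interchanged, which is furnished by Lemma~\ref{summableWeight}.

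Write $\bar\psi:=\sigma_*\psi\colon V(G)\to\mathds{C}$, so that the function denoted $\psi\circ\sigma$ in the statement is the vertex-constant function taking the value $\bar\psi(w)$ on $U(w)$ for each $w\in V(G)$. Since $\eta_v$ is the indicator function of $U(v)$ (and real-valued), the left-hand side reduces to $\int_{U(v)}\mathcal{H}_f(\psi\circ\sigma)(x)\,\absolute{\omega(x)}$. Next I would split the domain of the integral defining $\mathcal{H}_f$ in~(\ref{LaplacianOperator}) into the disjoint pieces $U(w)$, $w\in V(G)$, and use that $\psi\circ\sigma$ equals the constant $\bar\psi(w)$ on $U(w)$ and the constant $\bar\psi(v)$ on $U(v)$; this gives, for $x\in U(v)$,
\[
\mathcal{H}_f(\psi\circ\sigma)(x)=\sum_{w\in V(G)}\bigl(\bar\psi(w)-\bar\psi(v)\bigr)\int_{U(w)}H(x,y)\,\absolute{\omega(y)}.
\]
Integrating this over $x\in U(v)$ and observing that, by the definition of the weight $m$ together with the definition of $\mathcal{A}_f$ in~(\ref{adjacencyOperator}),
\[
m(vw)=\langle\eta_v,\mathcal{A}_f\eta_w\rangle_\omega=\int_{U(v)}\int_{U(w)}H(x,y)\,\absolute{\omega(y)}\,\absolute{\omega(x)},
\]
one obtains $\langle\mathcal{H}_f(\psi\circ\sigma),\eta_v\rangle_\omega=\sum_{w\in V(G)}m(vw)\bigl(\bar\psi(w)-\bar\psi(v)\bigr)$, which is precisely $\Delta_m\bar\psi(v)=\Delta_m(\sigma_*\psi)(v)$ by the definition of $\Delta_m$.

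The step needing justification — and the one I expect to be the only real obstacle — is the interchange of the summation over $w$ with the integration over $U(v)$ (equivalently, the termwise splitting displayed above, valid for almost every $x$). This is a Fubini--Tonelli argument: all the integrands are built from the nonnegative kernel $H$, and by Lemma~\ref{summableWeight} the double sum $\sum_{v,w}m(vw)=m(G)$ is finite, so $\sum_w m(vw)<\infty$ for each fixed $v$; it then remains to control the coefficients $\bar\psi(w)-\bar\psi(v)$. For the finitely many vertices $w$ whose cells meet a neighbourhood of $S\cap F$ the corresponding values are simply finite, while for the remaining vertices one uses the boundedness of $G(x,y)=H(x,y)\absolute{\omega(x)}\absolute{\omega(y)}$ recorded in the proof of Lemma~\ref{summableWeight} (together with Lemma~\ref{niceKernelFunction}.2) to bound $m(vw)$ by a constant multiple of $\absolute{\omega}(U(v))\,\absolute{\omega}(U(w))$, whence $\sum_w m(vw)\,\absolute{\bar\psi(w)}<\infty$ follows from $\psi\circ\sigma\in L^2(X(K)\setminus\bar S,\absolute{\omega})$ via the Cauchy--Schwarz inequality against the finite measure $m$. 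Once this estimate is in place, the asserted identity is a purely formal rearrangement.
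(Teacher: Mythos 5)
Your proof is correct and follows essentially the same route as the paper: decompose the integration domain into the cells $U(w)$, use vertex-constancy of $\psi\circ\sigma$, and identify the resulting double integrals with the weights $m(vw)$ to arrive at $\sum_w m(vw)(\sigma_*\psi(w)-\sigma_*\psi(v))=\Delta_m(\sigma_*\psi)(v)$. The only difference is that you explicitly justify the interchange of sum and integral via Lemma~\ref{summableWeight} and Cauchy--Schwarz, a step the paper's proof leaves implicit.
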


\begin{proof}
It holds true that
\begin{align*}
\langle\mathcal{H}_f(\psi\circ\sigma),\eta_v\rangle_\omega&=
\iint_{(X(K)\setminus\bar{S})^2}
H_f(x,y)(\psi(\sigma(y))-\psi(\sigma(x)))\eta_v(x)\absolute{\omega(y)}\absolute{\omega(x)}
\\
&=
\sum\limits_{w\in V(G)}
\int_{U(v)}\int_{U(w)}H(x,y)\psi(\sigma(y))\absolute{\omega(y)}\absolute{\omega(x)}
\\
&-\sum\limits_{w\in V(G)}\int_{U(v)}\int_{U(w)}H(x,y)\absolute{\omega(y)}\psi(\sigma(x))\absolute{\omega(x)}
\\
&=\sum\limits_{w\in V(G)}m(vw)(\sigma_*\psi)(w)
-m(v)(\sigma_*\psi)(v)
\\
&=\sum\limits_{w\in V(G)}
m(vw)\left(\sigma_*\psi(w)-(\sigma_*\psi(v)\right)
\\
&=\Delta_m(\sigma_*\psi)(v)
\end{align*}
which proves the assertion.
\end{proof}

In \cite{Kozyrev2002}, S.V. Kozyrev introduced $p$-adic wavelets and found they 
are  an orthogonal basis  of $L^2(\mathds{Q}_p)$
consisting of  eigenfunctions of the Vladimirov operator \cite[ Thm. 2]{Kozyrev2002}. 
On a non-archimedean local field $K$, they can be written as
\[
\psi_{B,j}(x)=\mu_K(B)^{\frac12}
\chi_K\left(\pi^{1-d}\tau(j)\right)\Omega(x\in B)
\]
where $\tau\colon O_K/\mathfrak{m}_K\to O_K$ is a lift of the residue field of $K$, $j\in \left(O_K/\mathfrak{m}_K\right)^\times$, and $B\subset K$ a disc of radius $\absolute{\pi}^d$. It is well-known that these so-called \emph{Kozyrev wavelet} form an orthonormal basis of $L^2(K)$, and that
\begin{align}\label{zeroMean}
\int_W\psi_{B,j}(y)\absolute{dy}=0
\end{align}
for all measurable sets $W\subset K$ containing $B$ and $j\in O_K\mathfrak{m}_K$, cf.\ \cite[Thm.\ 2.3.9]{XKZ2018}.

\begin{definition}
A \emph{wavelet} on $X(K)$ is a function $\psi\colon X(K)\to\mathds{C}$
supported inside an open $U\subseteq X(K)$ such that there is a local chart $\kappa\colon U\to K$ with 
$\kappa_*\psi$ a Kozyrev wavelet with support inside $U$.
\end{definition}

Immediately see that a wavelet on $X(K)$ is defined by a Kozyrev
wavelet $\psi_{B,j}$ supported inside the fundamental domain $F$. For simplicity, a wavelet on $X(K)$ will
be written as such a  $\psi_{B,j}$ supported in $F$.
\newline

In order to be able to calculate integrals over $X(K)$, it is necessary to look in some detail at differential $1$-forms on $X$. It is known that the space of regular differential $1$-forms on $X$ has dimension $g$.
This is proved with $p$-adic methods in \cite[Ch.\ VI.4]{GvP1980}. In fact, every regular differential $1$-form on $X$ is a linear combination of differential forms of the following type:
\[
\omega_i=\frac{du_i}{u_i}=w_i(z)\,dz,\quad i=1,\dots,g
\]
where
\[
w_i(z)=\frac{u_i'(z)}{u_i(z)},\quad
u_i=\theta(a,\gamma_ia;z)
\]
with $a\in\Omega$, and $\gamma_1,\dots,\gamma_g\in \Gamma$ are a basis for $\Gamma$ compatible with the good fundamental domain $F$ \cite[Prop.\ VI.4.2]{GvP1980}. Since $\omega_i$ is a $\Gamma$-invariant differential form, the function $w_i$ must transform as
\begin{align}\label{weight2Form}
w_i(\alpha z)=(cz+d)^2\,w_i(z)
\end{align}
if $\alpha\in \Gamma$ is represented by the matrix
\[
\begin{pmatrix}
a&b\\c&d
\end{pmatrix}\in\SL_2(K)
\]
as can easily be verified.
In general, any regular differential $1$-form $\omega$ on $X$ is of the form
\[
\omega=w(z)\,dz
\]
where the analytic function $w(z)$ on $\Omega$ satisfies the transformation rule (\ref{weight2Form}).
\newline

A \emph{circle} in $K$ is a subset of the form
\[
S_k(a)=\mathset{x\in K\mid\absolute{x-a}=\absolute{\pi}^{-k}}
\]
where $a$ is a \emph{centre} and $\absolute{\pi}^{-k}$ the \emph{radius} of $S_k(a)$ with $k\in\mathds{Z}$.

\begin{Lemma}\label{waveletMeanZero}
Let $\psi_{B,j}$ be a wavelet on $X(K)$ supported on a disc $B\subset F$. Then
\[
\int_B\psi_{B,j}(y)\absolute{\omega(y)}=
\begin{cases}
0,&B\cap S=\emptyset
\\
I>0,&B\cap S\neq\emptyset
\end{cases}
\]
for all $j\in \left(O_K/\mathfrak{m}_K\right)^\times$. 
\end{Lemma}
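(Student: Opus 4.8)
The plan is to compute the integral $\int_B \psi_{B,j}(y)\,\absolute{\omega(y)}$ by unwinding the measure $\absolute{\omega}$ in a local chart and comparing it with the vanishing property \eqref{zeroMean} of the Kozyrev wavelet against the Haar measure $\absolute{dy}$. Writing $\omega = w(z)\,dz$ on $\Omega$ as above, on the disc $B\subset F$ the measure becomes $\absolute{\omega(y)} = \absolute{w(y)}\,\absolute{dy}$, so the integral is $\int_B \psi_{B,j}(y)\,\absolute{w(y)}\,\absolute{dy}$. The function $\absolute{w(y)}$ is the absolute value of an analytic function on $F$ whose zero set meets $B$ precisely when $B\cap S\neq\emptyset$, since $S=V(\omega)$ and $V(\omega)$ is the zero locus of $\omega$ (equivalently of $w$ on each chart).

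First I would treat the case $B\cap S=\emptyset$. Here $w$ has no zeros on $B$, and since $w$ is analytic and nonvanishing on the disc $B$, its absolute value $\absolute{w(y)}$ is constant on $B$ (a nonvanishing analytic function on a $p$-adic disc has constant absolute value, by the theory of Newton polygons / the maximum principle, exactly as used in the proof of Lemma~\ref{rationalFunction}). Pulling this constant out, the integral becomes a nonzero multiple of $\int_B \psi_{B,j}(y)\,\absolute{dy}$, which vanishes by \eqref{zeroMean} (applied with $W=B$, noting $j\in(O_K/\mathfrak{m}_K)^\times\subset O_K\setminus\mathfrak{m}_K$). This gives the first case.

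Next I would treat $B\cap S\neq\emptyset$, the case where the answer is a strictly positive number $I$. Here $\psi_{B,j}$ takes values in the roots of unity times $\mu_K(B)^{1/2}$, and on each circle $S_k(a)$ of radius strictly smaller than that of $B$ around a point $a$ where the wavelet phase $\chi_K(\pi^{1-d}\tau(j))$ is refined, the phase is genuinely oscillating — so against a \emph{locally constant} density the integral would still vanish. The point is that $\absolute{w(y)}$ is \emph{not} locally constant near a zero $a\in S\cap B$: by Lemma~\ref{waveletMeanZero}'s asymptotic regime (cf.\ the asymptotics $\absolute{w(y)}\sim C\absolute{y-a}^{\alpha}$ with integer $\alpha\ge 1$ coming from the order of vanishing of $\omega$ at $a$, as in the proof of Lemma~\ref{summableWeight}), the density strictly decreases toward $a$. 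I would decompose $B$ into the circles $S_k$ on which $\psi_{B,j}$ is constant except for the finitely many "critical" sub-circles where the phase averages to zero; on those critical circles the extra weight $\absolute{w(y)}$ is strictly smaller near $a$ than farther from $a$, so the cancellation is incomplete and leaves a strictly positive remainder. Concretely: $\int_B\psi_{B,j}\,\absolute{\omega} = \mu_K(B)^{1/2}\sum_{k}\bigl(\sum_{j\text{-th class}}\pm\bigr)\cdot(\text{mass of }\absolute{w}\text{ on that piece})$, and since $\absolute{w}$ is strictly monotone in $\absolute{y-a}$ on a neighbourhood of $a$, the alternating sum does not telescope to zero but to a positive quantity. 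I would also use that $w$ has only finitely many zeros in $F$, so only finitely many circles are affected and convergence is not an issue.

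The main obstacle is the second case: making rigorous that the failure of local constancy of $\absolute{w}$ at a zero $a\in S$ forces the integral to be \emph{strictly positive} rather than merely possibly nonzero, and in particular that there is no accidental cancellation between the contributions of distinct zeros $a,a'\in S\cap B$ or between annuli at different scales. I expect this requires pinning down the sign: on each refining circle, the wavelet phase summed over a residue class is $-1$ times the "missing" central value, and pairing this with the strictly smaller central weight $\absolute{w}$ produces a deficit of the correct sign. A clean way is to write $\int_B\psi_{B,j}\,\absolute{\omega} = \int_B \psi_{B,j}(y)\bigl(\absolute{w(y)} - \absolute{w}(y^\ast)\bigr)\absolute{dy}$ where $y^\ast$ is a reference point on the largest circle, using \eqref{zeroMean} to subtract the constant; then $\absolute{w(y)}-\absolute{w(y^\ast)} \le 0$ with strict inequality on a set of positive measure exactly when $B\cap S\neq\emptyset$, and a careful bookkeeping of the wavelet's sign pattern on nested circles gives $I>0$.
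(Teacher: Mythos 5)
Your first case is correct and is essentially the paper's argument: when $B$ contains no zero of $w$, the factorisation $\absolute{w(y)}=C\prod_i\absolute{y-a_i}$ makes $\absolute{w}$ constant on $B$, and \eqref{zeroMean} finishes it. One caveat: ``non-vanishing analytic on a disc $\Rightarrow$ constant absolute value'' is false for the restriction to $K$-points when there are non-rational zeros inside the disc (e.g.\ $x^2-\pi$ on $O_K$), so this step really rests on Assumption \ref{assumption}; citing the factorisation with $K$-rational $a_i$, as in Lemma \ref{rationalFunction}, is the safe route.

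The genuine gap is in the second case, and it is exactly the one you flag: the sign. Your subtraction identity is the right move and localises the integral cleanly, but it cannot deliver $I>0$. Write $B$ as the disjoint union of its $q$ maximal proper sub-discs $B_0,\dots,B_{q-1}$, on which $\psi_{B,j}$ takes constant values $\mu_K(B)^{1/2}\epsilon_m$ with $\sum_m\epsilon_m=0$, and suppose the zeros of $w$ in $B$ all lie in one sub-disc $B_{m_0}$. Since $\absolute{w}$ equals a constant $W$ on $B\setminus B_{m_0}$ and is $\le W$ on $B_{m_0}$, your identity gives
\[
\int_B\psi_{B,j}(y)\absolute{\omega(y)}
=\mu_K(B)^{\frac12}\,\epsilon_{m_0}\int_{B_{m_0}}\bigl(\absolute{w(y)}-W\bigr)\absolute{dy},
\]
i.e.\ a strictly negative real number times the root of unity $\epsilon_{m_0}$, the value of the wavelet's character on the sub-disc containing the zero. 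No bookkeeping of the sign pattern turns this into a positive real: for $p>2$ the $\epsilon_m$ are $p$-th roots of unity, so the product is not even real unless $\epsilon_{m_0}=1$, in which case it is negative. What your method honestly proves is $I\neq 0$ under the single-sub-disc hypothesis (and even nonvanishing can fail if zeros of equal order sit symmetrically in several sub-discs, since then the contributions carry the weights $\epsilon_m$ and can cancel). For comparison, the paper proves case 2 by recentring at the zero, decomposing $B$ into circles $S_k(0)$ and quoting the formula for $\int_{S_k(0)}\chi_K(\pi^{1-d}\tau(j)x)\absolute{dx}$, taking all contributions positive; but that only happens when the character is constant on $B$, which is incompatible with the mean-zero property \eqref{zeroMean} used in case 1 — with indices consistent with \eqref{zeroMean} the outermost circle contributes negatively and the same phase ambiguity reappears. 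So the obstacle you identified is real, and neither route yields ``$>0$''; the robust conclusion is ``$\neq 0$'' (which is all that the subsequent results actually use).
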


\begin{proof}
Write $\omega$ as
\[
\omega=w(x)\,dx
\]
with $w(x)$ a holomorphic function on $\Omega(K)$.
The absolute value
$\absolute{w(x)}$ is the absolute value of a polynomial on $F$, some of whose zeros may be in the holes of $F$. Hence, according to Assumption \ref{assumption}, $\absolute{w}$ can be written as
\[
\absolute{w(x)}
=C\cdot\absolute{x-a_1}\cdots\absolute{x-a_r}
\]
with $a_1,\dots,a_r\in O_K$, and  $C>0$.

\smallskip
Assume first that $B$ does not contain any zeros of $w(x)$. It follows that $B$ must lie inside a circle $S_k(a_i)$
for some sufficiently large $k\in\mathds{N}$ and $i\in\mathset{1,\dots,r}$. Hence,
\begin{align*}
I&=\int_{S_k(a_i)}\psi_{B,j}(y)\absolute{w_i(y)}\absolute{dy}
=C\cdot\absolute{\pi}^k
\int_{S_k(a_i)}\psi_{B,j}(y)\absolute{dy}=0
\end{align*}
where the latter equality holds true, because $B\subset S_k(a_i)$ and by (\ref{zeroMean}). This proves the assertion in the first case.

\smallskip
Now, assume that $B$ contains a zero of $\omega$. Then assume w.l.o.g.\ that $\omega(0)=0$, and assume that the radius of $B$ is $\absolute{\pi}^d$.
Then 
\[
\int_{S_k(0)}\psi_{B,j}(x)\absolute{dx}
=\begin{cases}
 \absolute{\pi}^k(1-\absolute{\pi}),& k\ge d-1
 \\
-\absolute{\pi}^{d-1},&k=d-2
 \\
 0,&k<d-2
 \end{cases}
\]
according to \cite[Lem.\ 3.6]{Rogers2004}. Hence, the oscillatory integral is positive:
\begin{align*}
\int_{B}\psi_{B,j}(x)\absolute{\omega(x)}
&=\sum\limits_{k=d}^\infty\int_{S_k(0)}
\psi_{B,j}(x)\absolute{\omega(x)}
\\
&=
(1-\absolute{\pi})\sum\limits_{k=d}^\infty c_k\absolute{\pi}^k>0
\end{align*}
since 
the values $c_k$ are positive for $k\ge d$.
This now proves the assertion in the remaining case.
\end{proof}

\begin{Lemma}\label{discInVertex}
Every disc $B$ in $F\setminus S$ is contained in some $U(v)$ for some $v\in V(G)$.
\end{Lemma}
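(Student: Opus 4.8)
The plan is to unwind the definitions of the retraction $\sigma$ and the sets $U(v)=\sigma^{-1}(v)$ and show that a disc cannot be split between two different fibres once the finitely many holes corresponding to $S$ have been removed. Recall from the skeleton diagram (\ref{skeletonDiagram}) and the tree inclusion (\ref{treeInclusion}) that $\sigma$ is the restriction of the standard retraction $\mathds{A}_K^{1,an}\setminus S\to I(\mathds{A}_K^1\setminus S)$, whose fibres over vertices of $\mathscr{T}_K$ are (intersections with $F(K)\setminus S$ of) maximal discs not meeting the skeleton. First I would observe that for a point $x\in F\setminus S$, the vertex $\sigma_F(x)$ is determined by the combinatorics of the join in $\mathscr{T}_K$ of $x$ together with the finitely many removed points; a small enough disc around $x$ lies entirely in one residue class at every relevant vertex and hence retracts to the single vertex $\sigma_F(x)$. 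So the statement is really: if $B\subset F\setminus S$ is a disc, then $\sigma_F$ is constant on $B$.

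The key step is the following elementary fact about the retraction onto a skeleton: if $B$ is a disc disjoint from a finite set $S$ of $K$-rational points, then $B$ does not meet the skeleton $I(\mathds{A}_K^1\setminus S)$ except possibly along the single segment leading to $\sigma_F(B)$, so that $\sigma_F(B)$ is a single vertex. Concretely, write $S\cap F=\{a_1,\dots,a_m\}$ and, as in the proof of Lemma \ref{almostLocallyConstant}, note that for $z\in O_K$ the quantity $-\log_{p^f}\absolute{x-z}$ is the radius of the disc attached to the join of $\sigma_F(x)$ and $\sigma_F(z)$ in $\mathscr{T}_K$. If $B\cap S=\emptyset$, then for each $a_i$ the function $x\mapsto\absolute{x-a_i}$ is constant on $B$ (a disc disjoint from $a_i$ has all its points at the same distance from $a_i$), so all the joins that define $\sigma_F$ on $B$ are computed from data that is constant on $B$; hence $\sigma_F$ is constant on $B$, i.e.\ $B\subseteq\sigma_F^{-1}(v)=U(v)$ for $v=\sigma_F(B)$.

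The only point that needs a little care is that $U(v)$ was defined as $\sigma^{-1}(v)\subset X(K)\setminus\bar S$, i.e.\ inside the quotient by $\Gamma$, whereas $B\subset F$; but since $F$ is a good fundamental domain and $\sigma$ is $\Gamma$-equivariant, the image of $B$ in $X(K)\setminus\bar S$ still lies in the fibre over the image of $v$, and we may identify $U(v)\cap F$ with a disc-union as above. The main (and essentially only) obstacle is bookkeeping: making sure that "disc in $F\setminus S$" genuinely avoids all the holes of $F$ as well, so that no point of $S$ — which includes the $\Gamma$-translates of the zeros of $\omega$ lying in the holes — can cause $\sigma_F$ to branch inside $B$. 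Once one notes that $S\cap F$ is finite (used already in Lemmas \ref{almostEverywhereDefined} and \ref{almostLocallyConstant}) and that $B$ is disjoint from it by hypothesis, the constancy of each $\absolute{x-a_i}$ on $B$ finishes the argument.
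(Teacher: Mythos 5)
Your proposal is correct and follows essentially the same route as the paper, whose proof is just the one-liner ``let $a\in B$, $v=\sigma_F(a)$; then clearly $B\subset U(a)=U(v)$.'' You simply supply the justification the paper leaves implicit, namely that $\sigma_F$ is constant on a disc $B$ disjoint from the finite set $S$ because the distances $\absolute{x-a_i}$ to the removed points (equivalently, the joins in $\mathscr{T}_K$ determining the retraction) do not vary over $B$.
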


\begin{proof}
Let $a\in B$, and let $v=\sigma_F(a)$. Then clearly $B\subset U(a)=U(v)$.
\end{proof}

The significance of Lemma \ref{discInVertex} is that a disc $B$ in $F\setminus S$ corresponds uniquely to a branch of the Bruhat-Tits tree $\mathscr{T}_K$ rooted in a vertex $w$ closest to $T_S$, but not contained in that tree.
If $v\in V(T_S)$ is the vertex closest to $w$, then
the disc $B$ must be contained in $U(v)$. Since each open $U(v)$ can be written as the union of finitely many discs in $F\setminus S$, the possible discs of $F\setminus S$ are determined by the vertices of the tree $T_S$ through Lemma \ref{discInVertex}. 

\begin{Corollary}\label{waveletEigenvalue}
Any wavelet $\psi_{B,j}$ supported in a disc $B\subset F\setminus S$ is an eigenfunction of $\mathcal{H}_f$ with eigenvalue $-\deg_{\mathcal{H}_f}(x)$ for any $x\in B$.
\end{Corollary}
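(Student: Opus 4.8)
The plan is to reduce the eigenvalue equation to the two facts already in place: that the adjacency operator $\mathcal{A}_f$ kills wavelets supported off $S$, and that the degree function is vertex-constant. Starting from \eqref{adjacencyOperator} and \eqref{LaplacianOperator}, I would first split, for any $x\in X(K)\setminus\bar S$,
\[
\mathcal{H}_f\psi_{B,j}(x)=\mathcal{A}_f\psi_{B,j}(x)-\deg_{\mathcal{H}_f}(x)\,\psi_{B,j}(x),
\]
using that $\bar S$ has $\absolute{\omega}$-measure zero so that $\int_{X(K)\setminus\bar S}H(x,y)\absolute{\omega(y)}=\deg_{\mathcal{H}_f}(x)$. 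Thus the claim is equivalent to $\mathcal{A}_f\psi_{B,j}\equiv 0$ together with the observation that $-\deg_{\mathcal{H}_f}(x)$ does not depend on the choice of $x\in B$.

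The last point is immediate: by Lemma \ref{discInVertex} the disc $B$ lies in some $U(v)$, and by Lemma \ref{niceKernelFunction}.3 the function $\deg_{\mathcal{H}_f}$ is constant on $U(v)$, hence constant on $B$, so the eigenvalue is well defined. For $\mathcal{A}_f\psi_{B,j}(x)=0$ I would argue as follows. Since $\psi_{B,j}$ is supported in $B$, the integral defining $\mathcal{A}_f\psi_{B,j}(x)$ reduces to $\int_B H(x,y)\psi_{B,j}(y)\,\absolute{\omega(y)}$. Fixing $x$, the function $y\mapsto H(x,y)$ is, by Lemma \ref{niceKernelFunction}.1 (equivalently by Lemma \ref{almostLocallyConstant} together with the symmetry $\absolute{g(x,y)}=\absolute{g(y,x)}$ and the fact that $\absolute{\omega}$ is absolutely continuous with respect to the chart Haar measure), constant on $U(v)\supseteq B$ outside a set of $\absolute{\omega}$-measure zero; call this constant value $c(x)$. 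Because $\psi_{B,j}$ is a fixed Kozyrev wavelet, hence bounded, one may replace $H(x,y)$ by $c(x)$ inside the integral without changing its value, obtaining $c(x)\int_B\psi_{B,j}(y)\,\absolute{\omega(y)}$. Finally, since $B\subset F\setminus S$ we have $B\cap S=\emptyset$, so Lemma \ref{waveletMeanZero} gives $\int_B\psi_{B,j}(y)\,\absolute{\omega(y)}=0$, whence $\mathcal{A}_f\psi_{B,j}(x)=0$ for every $x$. Substituting into the splitting yields $\mathcal{H}_f\psi_{B,j}=-\deg_{\mathcal{H}_f}(\cdot)\,\psi_{B,j}$, and on the support $B$ this factor equals the constant $-\deg_{\mathcal{H}_f}(x)$, $x\in B$, as asserted; outside $B$ both sides vanish.

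The step I expect to require the most care is the interchange ``replace $H(x,y)$ by its constant value on $B$'': one must be sure that the exceptional set from Lemma \ref{niceKernelFunction}.1 (or Lemma \ref{almostLocallyConstant}) is $\absolute{\omega}$-negligible and, more importantly, that the slice $y\mapsto H(x,y)$ is genuinely $\absolute{\omega}$-a.e.\ constant on $B$ for the relevant $x\in B$ — this needs Fubini applied to the product-measure-zero statement, or a direct appeal to the one-variable local constancy already isolated in Lemma \ref{almostLocallyConstant}. One should also note that this local constancy covers both branches of the case definition of $H$, since the value $1$ on $\{x-y\in U(P(f))\}$ is locally constant as well and $U(P(f))$ is a union of retraction fibres. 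Apart from this measure-theoretic bookkeeping, the argument is a direct combination of the earlier lemmas.
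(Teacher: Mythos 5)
Your argument is correct and is essentially the paper's own proof, just written out in full: the paper likewise combines the vertex-constancy of $\deg_{\mathcal{H}_f}$ (via Lemma \ref{discInVertex} and Lemma \ref{niceKernelFunction}.3) with the vanishing of $\int_B\psi_{B,j}\absolute{\omega}$ from Lemma \ref{waveletMeanZero} to kill the adjacency term. The measure-theoretic caveat you flag about the a.e.\ constancy of the slice $y\mapsto H(x,y)$ is a reasonable point of care, but it is exactly what Lemmas \ref{almostLocallyConstant} and \ref{niceKernelFunction}.1 are set up to provide.
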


\begin{proof}
This follows from the fact that $\deg_{\mathcal{H}_f}(x)$ is vertex-constant, and from Lemmas \ref{waveletMeanZero} and \ref{discInVertex}. 
\end{proof}

\begin{Proposition}\label{orthogonalDecomposition}
The following statements hold true:
\begin{align*}
\langle\mathcal{H}_f\eta_w,\eta_v\rangle_\omega&=m(vw)-m(v)\delta_{vw}&(1)
\\
\langle\mathcal{H}_f\eta_v,\psi_{B,j}\rangle_\omega
&=0&(2)
\\
\langle\mathcal{H}_f\psi_{B,j},\eta_w\rangle_\omega&=0&(3)
\\
\langle\mathcal{H}_f\psi_{B,j},\psi_{C,k}\rangle&=-\deg_{\mathcal{H}_f}(x)\,\delta_{B,C}&(4)
\end{align*}
for  $j,k\in \left(O_K/\mathfrak{m}_K\right)^\times$, $v,w\in V(G)$,  discs $B,C\subset F\setminus S$, and $x\in B$. 
\end{Proposition}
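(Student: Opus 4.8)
The strategy is to verify the four identities one at a time, in each case collapsing the integral in $\mathcal{H}_f$ onto an object already understood above. For identity (1), note that $\eta_w$ is vertex-constant: it is the pullback under $\sigma$ of the indicator $\mathbf 1_{\{w\}}$ of the vertex $w$, which lies in $L^2(G,m)$ since $m(w)\le m(G)<\infty$ by Lemma \ref{summableWeight}. Applying Lemma \ref{matrixElement} with this function yields
\[
\langle\mathcal{H}_f\eta_w,\eta_v\rangle_\omega=\Delta_m(\mathbf 1_{\{w\}})(v)=\sum_{u\in V(G)}m(vu)\bigl(\mathbf 1_{\{w\}}(u)-\mathbf 1_{\{w\}}(v)\bigr)=m(vw)-m(v)\,\delta_{vw},
\]
which is (1).

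For identity (2), I would first record that $\mathcal{H}_f\eta_v=\mathcal{A}_f\eta_v-\eta_v\cdot\deg_{\mathcal{H}_f}$ is vertex-constant outside an $\absolute{\omega}$-null set. Indeed $\deg_{\mathcal{H}_f}$ is vertex-constant by Lemma \ref{niceKernelFunction}.3, $\eta_v$ is vertex-constant by definition, and $\mathcal{A}_f\eta_v(x)=\int_{U(v)}H_f(x,y)\,\absolute{\omega(y)}$ is vertex-constant by Lemma \ref{niceKernelFunction}.1 together with Fubini's theorem: $H_f$ is constant on $U(v')\times U(v)$ off a measure-zero set, so for a.e.\ $x\in U(v')$ the integrand is a.e.\ constant on $U(v)$, making the integral depend only on $(v',v)$. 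Now let $\psi_{B,j}$ be a wavelet supported in a disc $B\subset F\setminus S$; by Lemma \ref{discInVertex}, $B\subseteq U(v'')$ for some $v''$, so $\mathcal{H}_f\eta_v$ agrees a.e.\ on $B$ with a constant $c\in\mathds{R}$. Hence
\[
\langle\mathcal{H}_f\eta_v,\psi_{B,j}\rangle_\omega=c\int_B\overline{\psi_{B,j}(x)}\,\absolute{\omega(x)}=c\,\overline{\int_B\psi_{B,j}(x)\,\absolute{\omega(x)}}=0,
\]
the final equality being the vanishing (first) case of Lemma \ref{waveletMeanZero}, which applies precisely because $B\cap S=\emptyset$.

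Identity (3) then follows from (2) by self-adjointness of $\mathcal{H}_f$ on $L^2(X(K)\setminus\bar S,\absolute{\omega})$, since $\langle\mathcal{H}_f\psi_{B,j},\eta_w\rangle_\omega=\langle\psi_{B,j},\mathcal{H}_f\eta_w\rangle_\omega=\overline{\langle\mathcal{H}_f\eta_w,\psi_{B,j}\rangle_\omega}=0$; alternatively one combines Corollary \ref{waveletEigenvalue} with Lemma \ref{waveletMeanZero} directly, as $\mathcal{H}_f\psi_{B,j}$ is a vertex-constant multiple of $\psi_{B,j}$. For identity (4), Corollary \ref{waveletEigenvalue} gives $\mathcal{H}_f\psi_{B,j}=-\deg_{\mathcal{H}_f}(x)\,\psi_{B,j}$ with $\deg_{\mathcal{H}_f}$ constant on $B$ (it is vertex-constant and $B$ lies in a single $U(v)$), so $\langle\mathcal{H}_f\psi_{B,j},\psi_{C,k}\rangle=-\deg_{\mathcal{H}_f}(x)\langle\psi_{B,j},\psi_{C,k}\rangle$, and one invokes the orthogonality relations of the Kozyrev wavelets: wavelets on distinct discs are orthogonal (discs $B,C\subset F\setminus S$ being disjoint or nested), and on $B$ the measure $\absolute{\omega}$ is a constant multiple of the Haar measure by Lemma \ref{rationalFunction} and $B\cap S=\emptyset$, which pins down the coefficient.

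\textbf{Main obstacle.} None of the four computations is deep once the preceding lemmas are in place; the point requiring genuine care is the bookkeeping of the exceptional $\absolute{\omega}$-null sets, i.e.\ checking that ``vertex-constant almost everywhere'' is preserved through the decomposition $\mathcal{H}_f\eta_v=\mathcal{A}_f\eta_v-\eta_v\deg_{\mathcal{H}_f}$ and the Fubini step, so that the pairing of $\mathcal{H}_f\eta_v$ with a wavelet legitimately reduces to the oscillatory integral $\int_B\psi_{B,j}\,\absolute{\omega}$ — which then vanishes only because the wavelet is supported in $F\setminus S$, away from the zeros of $\omega$.
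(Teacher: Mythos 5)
Your proof is correct and follows essentially the same route as the paper: (1) via Lemma \ref{matrixElement}, (2)--(3) by reducing to Lemma \ref{discInVertex} and the vanishing case of Lemma \ref{waveletMeanZero}, and (4) via Corollary \ref{waveletEigenvalue} and orthogonality of Kozyrev wavelets. The only cosmetic differences are that you factor $\mathcal{H}_f\eta_v=\mathcal{A}_f\eta_v-\eta_v\deg_{\mathcal{H}_f}$ where the paper splits the double integral directly, and you derive (3) from (2) by self-adjointness where the paper recomputes it; both are equivalent.
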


\begin{proof}
(1). Obtain from Lemma \ref{matrixElement} that
\begin{align*}
\langle\mathcal{H}_f\eta_w,\eta_v\rangle_\omega&=\Delta_m(\delta_w)(v)
\\
&=\sum\limits_{s\in V(G)}
m(vs)(\delta_{w}(s)-\delta_{w}(v))
\\
&=m(vw)-m(v)\delta_{vw}
\end{align*}

\noindent
(2). Observe that
\begin{align*}
\langle\mathcal{H}_f\eta_v,\psi_{B,j}\rangle_\omega
&=
\int_B\int_{U(v)}
H(x,y)(\eta_v(y)-\eta_v(x))\psi_{B,j}(x)\absolute{\omega(y)}
\absolute{\omega(x)}
\\
&=\int_{B\cap U(v)}
H(x,y)\absolute{\omega(y)}\psi_{B,j}(x)\absolute{\omega(x)}
\\
&-\int_{B\cap U(v)}\deg_{\mathcal{H}_f}(x)\psi_{B,j}(x)\absolute{\omega(x)}
\end{align*}
In the case that $B\cap U(v)=\emptyset$, the last two integrals clearly vanish. According to Lemma \ref{discInVertex}, the remaining case is $B\subset U(v)$. Hence, both $H(x,y)$ and $\deg_{\mathcal{H}_f}(x)$ are constant on $B$, and these two integrals both vanish by Lemma \ref{waveletMeanZero}.

\smallskip
\noindent
(3). Observe that
\begin{align*}
\langle\mathcal{H}_f\psi_{B,j},\eta_w\rangle_\omega
&=
\int_{U(w)}\mathcal{H}_f\psi_{B,j}(x)\absolute{\omega(x)}
\\
&=-\int_{U(w)}\deg_{\mathcal{H}_f}(x)\psi_{B,j}(x)\absolute{\omega(x)}
\end{align*}
Again, by Lemma \ref{discInVertex}, the degree function is a constant in $U(w)$, and hence the integral vanishes by Lemma \ref{waveletMeanZero}.

\smallskip\noindent
(4). Using Lemma \ref{waveletEigenvalue}, obtain
\[
\langle\mathcal{H}_f\psi_{B,j},\psi_{C,k}\rangle_\omega=-\deg_{\mathcal{H}_f}(x)\langle\psi_{B,j},\psi_{C,k}\rangle_\omega
\]
for any $x\in B$, because of
 Lemma \ref{discInVertex}. Now, from that same Lemma take that the last inner product value is a constant times
\[
\langle\psi_{B,j},\psi_{C,j}\rangle_{L^2(K)}=0
\]
which is the inner product on $L^2(K)$ w.r.t.\ to the Haar measure on $K$. This value vanishes, because any two distinct Kozyrev wavelets are mutually orthogonal. This proves the last assertion.
\end{proof}

Use the following notation for the operators
\[
\mathscr{H}:=\mathcal{H}_f|_{L^2(X(K)\setminus\bar{S})_\sigma},\quad
\mathscr{A}:=\mathcal{A}_f|_{L^2(X(K)\setminus\bar{S})_\sigma}
\]
acting on the Hilbert space $L^2(X(K)\setminus\bar{S})_\sigma$.
Both of these operators can be identified with Laplacian and adjacency matrices, respectively, for the infinite graph $G$. The operator $\mathscr{H}$ is the version of a helpful matrix as in \cite{brad_thetaDiffusionTateCurve} to be used in the case of a Mumford curve. Its spectrum is of interest, and so the following Lemma addresses first properties of $\mathscr{A}$.

\begin{Lemma}\label{HilbertSchmidtCondition}
The operator $\mathscr{A}$  is a Hilbert-Schmidt operator on $L^2(X(K)\setminus\bar{S})_\sigma$, if and only if $S\subset V(f)$.
In this case, it is a compact operator.
\end{Lemma}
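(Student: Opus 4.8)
The plan is to recognise $\mathscr{A}$ as an integral operator against the kernel $H_f(x,y)\,\absolute{\omega(x)}\absolute{\omega(y)}$ on the $\sigma$-algebra of vertex-constant functions, and then apply the standard criterion that an integral operator on $L^2$ of a measure space is Hilbert--Schmidt exactly when its kernel lies in $L^2$ of the product space. Concretely, identifying $L^2(X(K)\setminus\bar S)_\sigma$ with $\ell^2(V(G),m)$ via $v\mapsto\eta_v$, the operator $\mathscr{A}$ has matrix entries $m(vw)$, and the Hilbert--Schmidt norm is
\[
\norm{\mathscr{A}}_{HS}^2=\sum_{v,w\in V(G)}\frac{m(vw)^2}{m(v)\,m(w)}
=\sum_{v,w\in V(G)}\frac{1}{m(v)m(w)}\left(\int_{U(v)}\int_{U(w)}H_f(x,y)\,\absolute{\omega(x)}\absolute{\omega(y)}\right)^2 .
\]
So the first step is to write this sum out and compare it with the already-established finiteness $m(G)<\infty$ from Lemma~\ref{summableWeight}. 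The point is that $m(G)<\infty$ always holds, but the Hilbert--Schmidt sum carries the extra factor $m(v)^{-1}m(w)^{-1}$, and these factors blow up precisely near the ends of $\sigma(X(K)\setminus\bar S)$ corresponding to $S$, because there the measure $\absolute{\omega}$ concentrates a zero of $w(x)$ of order $\ge1$ (this is the asymptotic $G(x,y)\sim C\absolute{x-a}^\alpha\absolute{y-b}^\beta$ with $\alpha,\beta\ge1$ recorded in the proof of Lemma~\ref{summableWeight}).

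Next I would separate the vertex set into the finitely many ``non-singular'' vertices (those bounded away from $S$, where $m(v)$ stays bounded below) and the infinite ends approaching points $a\in S\cap F$. On the non-singular part the double sum is plainly finite. Along an end approaching $a\in S$, one parametrises the vertices by the radius $k\to\infty$ of the corresponding disc $B_k$, computes $m(B_k)\asymp\int_{B_k}\absolute{w(x)}\absolute{dx}\asymp\absolute{\pi}^{k(1+\alpha_a)}$ where $\alpha_a\ge1$ is the vanishing order of $\omega$ at $a$, and similarly $\int_{B_k}\int_{B_\ell}H_f\,\absolute{\omega}\absolute{\omega}\asymp\absolute{\pi}^{k(1+\alpha_a)}\absolute{\pi}^{\ell(1+\alpha_b)}$ up to the bounded factor $H_f$. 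Then the contribution to $\norm{\mathscr{A}}_{HS}^2$ from a pair of ends is, up to constants, $\sum_{k,\ell}\absolute{\pi}^{k(1+\alpha_a)}\absolute{\pi}^{\ell(1+\alpha_b)}$, i.e. a convergent geometric series — but only because $\alpha_a,\alpha_b\ge1$, which is exactly the hypothesis $S=V(\omega)\subseteq V(f)$ forcing $f$ to vanish wherever $\omega$ does, so that $H_f=\absolute{f(x-y)}$ does not introduce a compensating pole. For the converse, if some $a\in S$ has $a\notin V(f)$, then along that end $H_f$ stays bounded \emph{below}, the factor $m(v)^{-1}$ wins, and the diagonal terms $m(vv)^2/m(v)^2$ — or even a single row $\sum_w m(vw)^2/(m(v)m(w))$ — already diverge; so one exhibits a single divergent subsum. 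Finally, compactness in the Hilbert--Schmidt case is automatic, since every Hilbert--Schmidt operator is compact \cite[Thm.\ 3.18]{Teschl2010}.

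The main obstacle I expect is making the asymptotic bookkeeping at the ends rigorous: one must check that $H_f(x,y)=\absolute{f(x-y)}$, evaluated for $x\in B_k$, $y\in B_\ell$ with $B_k,B_\ell$ shrinking toward \emph{distinct} points $a\ne b$ of $S$, really is bounded above and below by positive constants (this uses Lemma~\ref{rationalFunction} and Lemma~\ref{almostLocallyConstant}, together with the observation that $x-y$ stays in a fixed annulus away from $V(f)\cup P(f)$), and — the genuinely delicate point — when $a=b$, i.e. the two ends coincide or $x-y$ itself approaches a zero of $f$, one must use $V(\omega)\subseteq V(f)$ to see that the extra vanishing of $\absolute{f(x-y)}$ only \emph{helps} convergence and never creates a pole on the relevant region (the case $x-y\in U(P(f))$ being harmless by definition of $H_f$). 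Handling the self-pairing of an end correctly, and confirming that the vanishing order of $f$ is indeed $\ge1$ there so the geometric ratio stays $<1$, is where the hypothesis is actually used and is the crux of the argument.
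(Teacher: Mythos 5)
Your overall strategy --- writing the Hilbert--Schmidt norm as a weighted sum of the matrix entries $m(vw)$ over the graph $G$ and analysing its asymptotics along the ends of the tree approaching $S$ --- is the same as the paper's, and the final compactness claim is fine. But the set-up has a normalisation problem. The lemma concerns $\mathscr{A}$ on $L^2(X(K)\setminus\bar{S})_\sigma$ with the inner product $\langle\cdot,\cdot\rangle_\omega$, whose orthonormal basis is $\nu(v)^{-\frac12}\eta_v$ with $\nu(v)=\langle\eta_v,\eta_v\rangle_\omega=\int_{U(v)}\absolute{\omega}$; the correct expression is therefore $\sum_{v,w}m(vw)^2/(\nu(v)\nu(w))$, equivalently $\sum_v\nu(v)^{-1}\langle\mathcal{A}_f\eta_v,\mathcal{A}_f\eta_v\rangle_\omega$ as in the paper. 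Your identification of $L^2(X(K)\setminus\bar{S})_\sigma$ with $\ell^2(V(G),m)$ is not an isometry: $m(v)=\deg_{\mathcal{H}_f}(x)\,\nu(v)$ for $x\in U(v)$, and the degree tends to $0$ along the ends in the regime $S\subseteq V(f)$, so the two quadratic forms are not equivalent exactly where the convergence question is decided. Moreover your own estimate $m(B_k)\asymp\int_{B_k}\absolute{w(x)}\absolute{dx}$ actually computes $\nu(B_k)$, not $m(B_k)$, so the proposal is internally inconsistent about which weight it is using.

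The more serious gap is that your quantitative bookkeeping never uses the hypothesis $S\subseteq V(f)$. You estimate the numerator by $\nu(B_k)\nu(B_\ell)$ times ``the bounded factor $H_f$'' and the denominator by $\nu(B_k)\nu(B_\ell)$, arriving at $\sum_{k,\ell}\absolute{\pi}^{k(1+\alpha_a)+\ell(1+\alpha_b)}$. This series converges unconditionally: the exponents $\alpha_a,\alpha_b$ are orders of vanishing of $\omega$ and have nothing to do with $f$, so the assertion that convergence ``is exactly the hypothesis $S\subseteq V(f)$'' does not follow from anything displayed. As written, your estimate would prove $\mathscr{A}$ is always Hilbert--Schmidt, which contradicts the ``only if'' half of the lemma --- and your converse argument (that boundedness of $H_f$ from below forces divergence of $\sum_w m(vw)^2/(m(v)m(w))$) contradicts those same estimates. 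The actual dichotomy, which the paper encodes as the comparison of exponents $\beta>\alpha$, is that along an end toward $a\in S$ the quantity $\langle\mathcal{A}_f\eta_v,\mathcal{A}_f\eta_v\rangle_\omega$ must decay \emph{strictly faster} than the normalising weight $\nu(v)$, and this strict gain comes precisely from the vanishing of the kernel $H_f$ itself near $a$, i.e.\ from $a\in V(f)$; with $H_f$ merely bounded above and below one gets terms of constant order along the infinitely many vertices of the end and divergence. You correctly flag this as ``the crux of the argument'' in your final paragraph, but you defer it rather than carry it out, and the estimates you do display wash it out entirely.
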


\begin{proof}
First, notice that
\[
\nu(v):=\langle\eta_v,\eta_v\rangle_\omega
=\int_{U(v)}\absolute{\omega}
\sim C_a\absolute{x-a}^\alpha,\quad v\to a\in S
\]
for $v\in V(G)$ with $C_a>0$ and $\alpha\ge 1$.
Further, the functions $\nu(v)^{-\frac12}\eta_v$ with $v\in V(G)$ form an orthonormal basis for $L^2(X(K)\setminus\bar{S})_\sigma$. The Hilbert-Schmidt property means that the squared Hilbert-Schmidt norm
\[
\norm{\mathscr{A}}_{HS}^2=\sum\limits_{v\in V(G)}
\nu(v)^{-1}\langle\mathcal{A}_f\eta_v,\mathcal{A}_f\eta_v\rangle_\omega
\]
is finite. 
Now, observe that
\begin{align*}
\langle\mathcal{A}_f\eta_v,\mathcal{A}_f\eta_v\rangle_\omega
&=
\int_{X(K)\setminus\bar{S}}\int_{U(v)}H_f(x,y)^2\absolute{\omega(y)}\absolute{\omega(x)}
\\
&\sim \tilde{C}_a\absolute{x-a}^\beta,\quad v\to a\in S\cup V(f)
\end{align*}
for $\tilde{C}_a>0$, $\beta\ge 1$.  Furthermore, $\beta>\alpha$ if and only if $a\in S\cap V(f)$.
Hence, $\norm{\mathscr{A}}_{HS}<\infty$, if and only if $S\subset V(f)$, as asserted.
Since any Hilbert-Schmidt operator is compact, the second assertion also follows.
\end{proof}

\begin{thm}\label{spectrumDiffusionOperator}
Assume that $S\subseteq V(f)$.
Then the following statements hold true:
\begin{enumerate}
\item 
There is an orthogonal decomposition
\[
L^2(X(K)\setminus \bar{S},\absolute{\omega})=L^2(X(K)\setminus\bar{S})_\sigma\oplus L^2(X(K)\setminus\bar{S})_0
\]
which is
$\mathcal{H}_f$-invariant. 
\item
The spectrum of $\mathcal{H}_f$ acting on $L^2(X(K)\setminus\bar{S})_0$ consists of the negative degree eigenvalues $-\deg_{\mathcal{H}_f}(x)$ corresponding to wavelets $\psi_{B,j}(x)$ supported in a disc $B\subset F\setminus S$ containing $x$, and with $j\in\left(O_K/\mathfrak{m}_K\right)^\times$.
\item
The spectrum $\Sigma$ of $\mathcal{H}_f$ restricted to $L^2(X(K)\setminus\bar{S})_\sigma$ is non-positive, contains $0$, its infimum in $\mathds{R}$ exists, and has precisely $\absolute{\bar{S}}$  accumulation points, and these are the only points of $\Sigma$ which are not  eigenvalues.
\end{enumerate}
\end{thm}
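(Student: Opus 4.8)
### Proof proposal

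\textbf{Overall strategy.} The plan is to prove the three statements in turn, using the orthogonal family of wavelets and vertex-indicators built up in Proposition \ref{orthogonalDecomposition} together with the Hilbert--Schmidt property of $\mathscr{A}$ from Lemma \ref{HilbertSchmidtCondition}. Statement (1) is the easiest: I would define $L^2(X(K)\setminus\bar S)_0$ as the closed span of all wavelets $\psi_{B,j}$ with $B\subset F\setminus S$ and $j\in(O_K/\mathfrak m_K)^\times$, observe that these together with the normalised indicators $\nu(v)^{-1/2}\eta_v$ form a complete orthonormal system of $L^2(X(K)\setminus\bar S,\absolute\omega)$ (this is the $\absolute\omega$-analogue of the Kozyrev basis of $L^2(K)$, combined with Lemma \ref{discInVertex} which says every admissible disc sits inside some $U(v)$), so the two subspaces are orthogonal complements. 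Invariance under $\mathcal H_f$ then follows directly from Proposition \ref{orthogonalDecomposition}: parts (2) and (3) there show $\mathcal H_f$ maps wavelets to scalar multiples of wavelets modulo nothing (they are eigenfunctions by Corollary \ref{waveletEigenvalue}), and parts (1)--(3) show $\mathcal H_f$ preserves $L^2(X(K)\setminus\bar S)_\sigma$ and does not mix the two pieces.

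\textbf{Statement (2).} On $L^2(X(K)\setminus\bar S)_0$ the operator acts diagonally in the wavelet basis by Corollary \ref{waveletEigenvalue}: each $\psi_{B,j}$ is an eigenfunction with eigenvalue $-\deg_{\mathcal H_f}(x)$ for $x\in B$, and this value depends only on the vertex $v\in V(T_S)$ that supports $B$ (Lemma \ref{niceKernelFunction}.3 together with Lemma \ref{discInVertex}). So the spectrum on this part is exactly the set of numbers $-\deg_{\mathcal H_f}(x)$ over admissible discs $B$, each of them an honest eigenvalue, with multiplicity coming from the choice of $j$ and of sub-discs; since $\deg_{\mathcal H_f}>0$ everywhere these are all strictly negative. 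This is essentially a bookkeeping step once (1) is in place.

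\textbf{Statement (3): the main point.} Here I would argue as follows. By the Hellinger--Toeplitz argument already given, $\mathscr H=\mathcal H_f|_{L^2(\cdot)_\sigma}$ is bounded and self-adjoint. Writing $\mathscr H=\mathscr A-D$, where $D$ is multiplication by the vertex-constant degree function $\deg_{\mathcal H_f}$, I would analyse the two terms. The operator $\mathscr A$ is compact (Lemma \ref{HilbertSchmidtCondition}, using $S\subseteq V(f)$), hence has spectrum consisting of eigenvalues accumulating only at $0$. The multiplication operator $D$ has spectrum equal to the closure of the range of $\deg_{\mathcal H_f}$ on $V(G)$; since $\deg_{\mathcal H_f}$ is vertex-constant and, by the asymptotics near the $\absolute\omega$-zeros $a\in S\cap F$ recorded in Lemmas \ref{summableWeight} and \ref{HilbertSchmidtCondition}, tends to a finite limiting value $d_a>0$ as $v\to a$, the essential spectrum of $D$ is exactly the finite set $\{-d_a : a\in \bar S\}$ of these limit values, of cardinality $\absolute{\bar S}$, while all other spectral values of $D$ are isolated eigenvalues. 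Because $\mathscr A$ is compact, Weyl's theorem on the invariance of the essential spectrum under compact perturbation gives $\sigma_{\mathrm{ess}}(\mathscr H)=\sigma_{\mathrm{ess}}(-D)$, so $\mathscr H$ has precisely $\absolute{\bar S}$ points of essential spectrum, and outside a compact set these are the only accumulation points of $\Sigma$; every other point of $\Sigma$ is an isolated eigenvalue of finite multiplicity. Non-positivity of $\Sigma$ follows from $\langle\mathcal H_f\phi,\phi\rangle_\omega=\iint H_f(x,y)(\phi(y)-\phi(x))\overline{\phi(x)}\,\absolute{\omega(y)}\absolute{\omega(x)}=-\tfrac12\iint H_f(x,y)\absolute{\phi(y)-\phi(x)}^2\le 0$ using symmetry of $H_f\ge 0$; and $0\in\Sigma$ because constants on $F$ (suitably truncated, or as a limit of normalised indicators of large vertex sets) are approximate null vectors for $\mathscr H$. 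Boundedness of $\mathscr H$ gives the existence of $\inf\Sigma$ in $\mathds R$.

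\textbf{Expected obstacle.} The delicate part is pinning down the essential spectrum of the multiplication operator $D$, i.e.\ showing that $\deg_{\mathcal H_f}$, as a function on the countable vertex set $V(G)$, accumulates at \emph{exactly} the $\absolute{\bar S}$ values $d_a$ and nowhere else, and that each $d_a$ really is attained as a genuine limit rather than, say, oscillating. This requires the precise local behaviour of $H_f$ and of $\absolute\omega$ near each zero $a\in S$ --- the asymptotics $G(x,y)\sim C\absolute{x-a}^\alpha\absolute{y-b}^\beta$ from Lemma \ref{summableWeight} and the analogous estimate in Lemma \ref{HilbertSchmidtCondition} --- and a careful argument that the tree $T_S$ has finitely many ends, one per point of $\bar S$, so that "accumulation in $V(G)$" is organised by those ends. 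I would isolate this as a lemma computing $\lim_{v\to a}\deg_{\mathcal H_f}(v)=d_a$ and showing the set of subsequential limits of $\deg_{\mathcal H_f}$ along $V(G)$ is $\{d_a:a\in\bar S\}$; once that lemma is available, the spectral conclusion is a routine application of Weyl's theorem.
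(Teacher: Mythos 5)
Your proposal follows essentially the same route as the paper: parts (1) and (2) are read off from Proposition \ref{orthogonalDecomposition} and Corollary \ref{waveletEigenvalue} exactly as in the text, and for part (3) you use the same splitting $\mathscr{H}=\mathscr{A}-D$ with $\mathscr{A}$ compact by Lemma \ref{HilbertSchmidtCondition} and $D$ the vertex-constant degree multiplication. The one substantive difference is how the last step is packaged: the paper argues directly from the structure of the spectrum of the compact operator $\mathscr{A}$, asserting that the numbers $\mu_n-\deg(v)$ are eigenvalues of $\mathscr{H}$ "since $\deg(v)$ is locally constant" --- a step that is not literally justified, because $\mathscr{A}$ and $D$ do not commute, so eigenvalues of $\mathscr{A}$ shifted by values of $\deg$ need not be eigenvalues of the difference. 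Your appeal to Weyl's theorem on the stability of the essential spectrum under compact perturbation sidesteps this and is the cleaner and more robust argument; likewise your quadratic-form identity for non-positivity is preferable to the paper's one-line assertion that the spectrum is signed "because it is a Laplacian." Both arguments ultimately rest on the same unproved-in-this-theorem input, which you correctly isolate as the obstacle: that $\deg_{\mathcal{H}_f}$, viewed on $V(G)$, has as its set of subsequential limits exactly the $\absolute{\bar{S}}$ values attached to the ends of $T_S$ (the paper supplies the relevant asymptotics only later, in Lemmas \ref{VertexCondition1} and \ref{VertexCondition2}), together with the observation that any value attained infinitely often must already be one of these limits, so that no extra essential spectrum appears. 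One small point you share with the paper and do not fully close: Weyl's theorem identifies the essential spectrum but does not by itself exclude that an accumulation point is also an (embedded or infinite-multiplicity) eigenvalue, so the claim that the $\absolute{\bar{S}}$ accumulation points are the \emph{only} non-eigenvalues still needs a separate word.
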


This is Theorem 1 of the introduction.

\begin{proof}
1. This follows immediately from Proposition \ref{orthogonalDecomposition}.

\smallskip\noindent
2. This is the statement of Corollary \ref{waveletEigenvalue}.

\smallskip\noindent
3. The spectrum of 
\[
\mathscr{H}=\mathscr{A}-\deg(v)I
\]
with
\[
\deg(v):=\deg_{\mathcal{H}_f}(x)
\]
for any $x\in U(v)$,
is non-negative, because it is a Laplacian and $\mathscr{A}$  is a positive matrix. 
The resolvent of $\mathscr{H}$ is the operator
\[
\Res(\lambda;\mathscr{H})=(\mathscr{H}-\lambda I)^{-1}
=\left(\mathscr{A}-(\deg+\lambda I)\right)^{-1}
\]
where $\deg$ means multiplying with the function $\deg(v)$.
Since $\mathscr{A}$ is a Hilbert-Schmidt operator, its spectrum $\Sigma(\mathscr{A})$
is compact, countably infinite, contains $0$ and
consists of eigenvalues $\mu_n$, except for $0$ which is the only limit point \cite[p.\ 103--115]{Rudin1991}. Hence, the spectrum  $\Sigma(\mathscr{H})$ contains elements of the form
\[
\lambda_n=\mu_n-\deg(v)
\]
with $v\in V(G)$, plus their limit points because the spectrum is closed. Since $\deg(v)$ is locally constant on the discrete topological space $V(G)$, it follows that 
these $\lambda_n$ are eigenvalues of $\mathscr{H}$. Further,
$0$ is also an eigenvalue, because constant functions are eigenfunctions of $\mathscr{H}$.
This now means that a limit point of $\Sigma(\mathscr{H})$ is given by an accumulation point of $\deg(v)$. These are obtained by letting vertex $v$ of $T_S$ approach a zero of $f$ which is the same as a zero of $\omega$. This shows that there are $\absolute{\bar{S}}$ limit points in $\Sigma(\mathscr{H})$,
namely those $\lambda_n$ as above for which  $\deg(v)$ tends to zero. And these are not eigenvalues of $\mathscr{H}$. This proves the last assertion.
\end{proof}

Theorem \ref{spectrumDiffusionOperator} generalises in some sense the statement of
\cite[Cor.\ 32]{FS2022}, where random walks on the countably infinite complete graph are studied.

\section{Heat equation on $X(K)$}

\subsection{A very general Feller semigroup}\label{generalFeller}

Let $Z$ be obtained from a compact Hausdorff space $X$ equipped with a positive Radon measure $\nu$ on the Borel $\sigma$-algebra of $X$ by subtracting a subset of Haar measure zero.
Let $\mathscr{L}$ be a bounded Laplacian integral operator on $C(Z)=C(Z,\norm{\cdot}_\infty)$ defined by a non-negative kernel function $k(x,y)$ on a dense subset of $Z\times Z$, and assume that 
\[
\iint_{Z\times Z}k(x,y)\nu(y)\nu(x)<\infty
\]
i.e.\ the integral operator
\[
\mathscr{I}\psi(x)
=\int_Zk(x,y)\psi(y)\nu(y)
\]
is a Hilbert-Schmidt operator. Assume further that
the degree function
\[
\deg(z)=\int_Z k(z,y)\nu(y)
\]
is finite, 
and positive for all $z\in Z$.

\begin{Lemma}\label{generalFellerSemigroup}
The general operator $\mathscr{L}$ on the space $Z$ generates a strongly continuous positive contraction semigroup (aka Feller semigroup) $\mathset{\exp\left(t\mathscr{L}\right)}_{t\ge0}$ on
$C(Z)$.
\end{Lemma}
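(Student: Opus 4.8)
The plan is to verify the hypotheses of a standard generation theorem for Feller semigroups, in the form of the Hille--Yosida--Ray theorem (or equivalently to show directly that $\mathscr{L}$ satisfies the positive maximum principle and is a bounded, everywhere-defined dissipative operator on $C(Z)$). First I would record that $\mathscr{L}$ is a bounded linear operator on $C(Z)$: writing $\mathscr{L}\psi(x) = \int_Z k(x,y)(\psi(y)-\psi(x))\,\nu(y) = \mathscr{I}\psi(x) - \deg(x)\psi(x)$, boundedness of $\mathscr{I}$ (it is Hilbert--Schmidt, hence bounded, and the assumption that the double integral is finite together with continuity of the kernel on a dense set gives $\mathscr{I}\colon C(Z)\to C(Z)$ bounded) and boundedness of the multiplication operator by the finite, everywhere-positive function $\deg$ yield $\|\mathscr{L}\| \le 2\|\deg\|_\infty < \infty$. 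Since $\mathscr{L}$ is bounded and everywhere defined, the semigroup $\exp(t\mathscr{L}) = \sum_{n\ge0} t^n\mathscr{L}^n/n!$ converges in operator norm and is automatically strongly continuous; the only real content is that it is \emph{positive} and a \emph{contraction}.

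Next I would establish the positive maximum principle: if $\psi\in C(Z)$ attains a nonnegative maximum at $x_0\in Z$, i.e.\ $\psi(x_0) = \max_{x}\psi(x) \ge 0$, then $\mathscr{L}\psi(x_0) = \int_Z k(x_0,y)(\psi(y)-\psi(x_0))\,\nu(y) \le 0$, because $k\ge0$ and $\psi(y)-\psi(x_0)\le 0$ for all $y$. (Here one should be slightly careful that $Z$ is not compact, so ``maximum'' should be read as supremum and one argues with an approximating sequence, or one passes to the compactification $X$ and uses that $k$ is defined on a dense subset and the integrand is dominated; this is the routine technical point I would expand but not belabour.) For a bounded operator on $C(Z)$, the positive maximum principle is equivalent to dissipativity together with positivity of the resolvent $(\lambda - \mathscr{L})^{-1}$ for $\lambda$ large, and it gives exactly that $\exp(t\mathscr{L})$ is positivity-preserving and satisfies $\|\exp(t\mathscr{L})\|\le 1$. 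Concretely: dissipativity $\|(\lambda-\mathscr{L})\psi\|_\infty \ge \lambda\|\psi\|_\infty$ for $\lambda>0$ follows by evaluating at a point where $|\psi|$ is (near-)maximal and using the maximum principle on $\pm\psi$; positivity of $\exp(t\mathscr{L})$ follows by writing $\exp(t\mathscr{L}) = e^{-t\|\deg\|_\infty}\exp\!\big(t(\mathscr{L}+\|\deg\|_\infty I)\big)$ and noting $\mathscr{L}+\|\deg\|_\infty I$ has a nonnegative kernel representation, so its exponential series has only positivity-preserving terms. Combining, $\exp(t\mathscr{L})$ is a strongly continuous, positive, contraction semigroup, which is the definition of a Feller semigroup used here.

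The main obstacle I anticipate is not any single deep step but rather handling the non-compactness of $Z$ and the fact that $k$ is only defined on a dense subset of $Z\times Z$: one must check that $\mathscr{I}$ and $\deg$ genuinely map $C(Z)$ (or $C_0$ of the locally compact space, depending on the intended function space) into itself and are bounded there, and that the supremum in the maximum-principle argument is attained or suitably approximated. I would resolve this by working on the compact Hausdorff space $X$, observing that the excised null set is $\nu$-negligible so integrals over $Z$ and over $X$ agree, that $\deg$ extends to a bounded Borel function, and that the relevant suprema are attained on $X$; then transferring back to $Z$. Once these measure-theoretic points are dispatched, the Hille--Yosida--Ray characterisation (a densely defined — here, everywhere defined and bounded — operator generates a Feller semigroup iff it satisfies the positive maximum principle and has dense range for $\lambda - \mathscr{L}$, the latter being automatic for a bounded operator and $\lambda>\|\mathscr{L}\|$) closes the argument.
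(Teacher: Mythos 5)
Your proposal is correct and follows essentially the same route as the paper: boundedness of $\mathscr{L}$ on $C(Z)$, the positive maximum principle verified pointwise from $k\ge 0$, and the range/resolvent condition (which you rightly note is automatic for a bounded operator via the Neumann series, matching the paper's explicit $I-T$ inversion), all feeding into the Hille--Yosida--Ray theorem, with the identification $Q_t=\exp(t\mathscr{L})$ coming from uniqueness of the generator for a uniformly continuous semigroup. Your added direct verification of positivity via $\exp(t\mathscr{L})=e^{-t\|\deg\|_\infty}\exp\bigl(t(\mathscr{L}+\|\deg\|_\infty I)\bigr)$ and your care about the supremum not being attained on the non-compact $Z$ are minor refinements of, not departures from, the paper's argument.
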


\begin{proof}
This is shown by checking the requirements for the Hille-Yosida-Ray
Theorem \cite[Ch. 4, Lem. 2.1]{EK1986}, cf.\ 1.-3.\ below.
\newline

1. The domain of $\mathscr{L}$ is dense in $C(Z)$. 
Since $\mathscr{L}$ is bounded, this is
clearly satisfied.
\newline

2. The operator $\mathscr{L}$ satisfies the positive maximum principle. Assume that
\[
\sup\limits_{z\in Z}\psi(z)=\psi(z_0)
\]
for some $z_0\in Z$. Then
\begin{align*}
\mathscr{L}\psi(z_0)&=
\int_Zk(z_0,z)(\psi(z)-\psi(z_0))\nu(z)
\\
&\le \psi(z_0)\int_Z k(z_0,z)(1-1)\nu(z)
\\
&\le 0
\end{align*}
where the first inequality is valid, because the degree function is positive.
\newline

3.
$\rank(\eta I - \mathscr{L})$ is dense in $C(Z)$ for some $\eta >0$.
Follow the method used in the proof of \cite[Lem.\ 4.1]{ZunigaNetworks}. Namely, the condition is equivalent to having some $\eta>0$ such that 
\begin{align}\label{equation}
\left(\eta I-\mathscr{L}\right)u=h
\end{align}
has a solution in $C(Z)$ for any given $h\in C(Z)$. Write (\ref{equation}) as 
\[
u(z)-\int_Z\frac{u(y)\nu(y)}{\eta+\deg(z)}
=\frac{h(z)}{\eta+\deg(z)}
\]
The operator
\[
Tu(z)=\int_Z\frac{u(y)\nu(y)}{\eta-\deg(z)}
\]
acting on $C(Z)$ satisfies
the inequality
\[
\norm{T}\le\frac{\gamma(Z)}{\eta}
\]
with
\[
\gamma(Z)=\sup\limits_{z\in Z}\deg(z)
\]
Now, by taking $\gamma>\gamma(Z)$, obtain an operator $I-T$ which has an inverse on $C(Z)$. This $\eta$ is independent of $h$. 
\newline

Hence, $\mathscr{L}$ generates a Feller semigroup $\mathset{Q_t}_{t\ge0}$. On the other hand, since $\mathscr{L}$ is a bounded linear operator on a Banach space, $\mathset{\exp\left(t\mathscr{L}\right)}$ is a uniformly continuous semigroup whose infinitesimal generator agrees with
that of $\mathset{Q_t}_{t\ge0}$. It follows that $\exp\left(t\mathscr{L}\right)=Q_t$ for $t\ge0$, cf.\ e.g.\ \cite[Theorems 1.2 and 1.3]{Pazy1983}. This proves the assertions. 
\end{proof}

Lemma \ref{generalFellerSemigroup} says that the operator $\mathscr{L}$ defines a heat equation of the form
\begin{align}\label{ZheatEquation}
\left(\frac{\partial}{\partial t}-\mathscr{L}\right)h(t,z)=0
\end{align}
with $t\ge0$, $z\in Z$,
and a corresponding diffusion process on $Z$.

\begin{proposition}\label{generalMarkovProcess}
There is a probability measure $p_t(z,\cdot)$ with $t\ge0$, $z\in Z$, on the Borel $\sigma$-algebra of $Z$ such that the Cauchy problem for  the heat equation (\ref{ZheatEquation}) having initial condition $h(0,z)=h_0(z)\in C(Z)$ has a unique solution in $C^1((0,\infty),Z)$ of the form
\[
h(t,z)=\int_Z h_0(y)p_t(z,\nu(y))
\]
Additionally, $p_t(z,\cdot)$ is the transition function of a strong Markov process whose paths are right continuous and have no other discontinuities than jumps.
\end{proposition}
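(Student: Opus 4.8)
The plan is to read the transition function directly off the Feller semigroup $\{Q_t\}_{t\ge0}=\{\exp(t\mathscr{L})\}_{t\ge0}$ produced in Lemma \ref{generalFellerSemigroup}. For fixed $t\ge0$ and $z\in Z$ the assignment $h_0\mapsto(Q_th_0)(z)$ is a positive linear functional on $C(Z)$, so by the Riesz representation theorem (applied on $Z$, resp.\ on the ambient compact $X$ after extending, see below) there is a Borel measure $p_t(z,\cdot)$ with $(Q_th_0)(z)=\int_Z h_0(y)\,p_t(z,dy)$. Because $\mathscr{L}1=\int_Zk(z,y)(1-1)\nu(y)=0$, constants lie in the kernel of the bounded operator $\mathscr{L}$, whence $Q_t1=1$ and each $p_t(z,\cdot)$ is a probability measure. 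That $z\mapsto p_t(z,A)$ is Borel measurable for every Borel $A$ follows from $Q_t(C(Z))\subseteq C(Z)$ together with a monotone-class argument, and the semigroup identity $Q_{t+s}=Q_tQ_s$ translates, via the same approximation of indicators by continuous functions, into the Chapman--Kolmogorov equations $\int_Zp_t(z,dy)\,p_s(y,A)=p_{t+s}(z,A)$. Thus $p_t(z,\cdot)$ is an honest (conservative) transition function.

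Next, for $h_0\in C(Z)=\mathcal{D}(\mathscr{L})$ set $h(t,z)=(Q_th_0)(z)=\int_Zh_0(y)\,p_t(z,dy)$. Since $\mathscr{L}$ is bounded, $Q_t=\sum_{n\ge0}t^n\mathscr{L}^n/n!$ converges in operator norm, $t\mapsto Q_t$ is real-analytic on $[0,\infty)$ with $\tfrac{d}{dt}Q_t=\mathscr{L}Q_t$, and therefore $h(\cdot,z)\in C^1((0,\infty))$ for every $z$, $h$ solves $(\partial_t-\mathscr{L})h=0$, and $h(0,\cdot)=h_0$. Uniqueness in $C^1((0,\infty),Z)$ is the uniqueness of the abstract Cauchy problem $u'=\mathscr{L}u$ for a bounded generator: any other $C^1$-solution with the same initial datum agrees with $Q_th_0$ by a Gronwall estimate, cf.\ \cite[Thm.\ 1.3]{Pazy1983}.

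It remains to realise $p_t(z,\cdot)$ as the transition function of a strong Markov process with the stated path regularity. Here I would invoke the standard construction attaching to a conservative Feller semigroup a Markov process whose sample paths are right continuous with left limits and which is strongly Markov, as developed in \cite[Ch.\ 4]{EK1986}: one first builds, via Kolmogorov's extension theorem, the canonical Markov process with transition function $p_t$, then uses the Feller property (strong continuity of $Q_t$ on $C(Z)$ and $Q_t1=1$) together with a supermartingale/upcrossing argument to produce a c\`adl\`ag modification, and finally derives the strong Markov property from right continuity of paths plus the Feller property of the resolvent. The phrase ``no discontinuities other than jumps'' is precisely the c\`adl\`ag property.

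The main obstacle is that $Z$ is a compact Hausdorff space with a $\nu$-null set removed and need not be locally compact, second countable, or metrisable, whereas the classical Feller-to-process theorems are usually stated for locally compact separable metric state spaces. I would resolve this by carrying out the Riesz representation and the path-regularity argument on the ambient compact space $X$, on which $\{Q_t\}$ and the kernel behave well, observing that $\{Q_t\}$ extends to a Feller semigroup on $C(X)$ and that, since the removed set is $\nu$-null and each $p_t(z,\cdot)$ charges only its complement, a process started in $Z$ stays in $Z$ for all $t$ almost surely; alternatively one simply restricts to metrisable $X$, which is harmless for the application in this paper, where $X=X(K)$.
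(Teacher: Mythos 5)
Your proposal is correct and follows essentially the same route as the paper: both extract the transition function $p_t(z,\cdot)$ from the Feller semigroup of Lemma \ref{generalFellerSemigroup} and then invoke the standard correspondence between Feller semigroups, transition functions, and strong Markov processes with c\`adl\`ag paths (the paper outsources these steps to \cite[Thms.\ 2.10, 2.15, 2.12]{Taira2009} following \cite[Thm.\ 4.2]{ZunigaNetworks}, whereas you re-derive them via Riesz representation, Chapman--Kolmogorov, and the Ethier--Kurtz construction). Your explicit treatment of uniqueness via the bounded generator and your remark on the non-local-compactness of $Z$ are welcome details that the paper's proof leaves implicit.
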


\begin{proof}
The proof will be the same as in the case of \cite[Thm.\ 4.2]{ZunigaNetworks}, and is
given here for the convenience of the reader.
\newline

Using the correspondence between Feller semigroups and transition func-
tions, one sees from Lemma \ref{generalFellerSemigroup} that there is a uniformly stochastically continuous $C_0$-transition function $p_t(z,\nu)$ satisfying condition (L) of \cite[Thm.
2.10]{Taira2009} such that
\[
\exp\left(t\mathscr{L}\right)h_0(z)=
\int_Z h_0(y)p_t(z,\nu(y))
\]
for $h_0\in C(Z)$, cf.\ e.g.\ \cite[Thm.\ 2.15]{Taira2009}. Now, by using the correspondence between transition functions and Markov processes, there exists
a strong Markov process whose paths are right continuous and have no discontinuities other than jumps, see e.g.\
\cite[2.12]{Taira2009}.
\end{proof}

\subsection{Markov process on $X(K)$}

It will be assumed here that
\[
S=V(\omega)\subset V(f)
\]
holds true, where $\omega$ is a regular $\Gamma$-invariant differential form on $\Omega$, and $f\in K(X)$ is a rational function on the Mumford curve $X$ over $K$. As before, it will be assumed that all zeros of $\omega$ and all zeros and poles of $f$ are $K$-rational points.

\begin{thm}
There exists a probability measure $p_t(x,\cdot)$ with $t\ge0$, $x\in X(K)\setminus\bar{S}$, on the Borel $\sigma$-algebra of $X(K)\setminus\bar{S}$ such that the Cauchy problem 
for the heat equation
\[
\left(\frac{\partial}{\partial t}-\mathcal{H}_f\right)h(t,x)=0
\]
with $t\ge0$, $x\in X(K)\setminus\bar{S}$, having initial condition $h(0,x)=h_0(x)\in C(X(K)\setminus\bar{S})$ has a unique solution in $C^1\left((0,\infty),X(K)\setminus\bar{S}\right)$ of the form
\[
h(t,x)=\int_{X(K)\setminus\bar{S}} h_0(y)p_t(x,\absolute{\omega(y)})
\]
Additionally, $p_t(x,\cdot)$ is the transition function of a Markov process whose paths are right continuous and have no other discontinuities than jumps.
\end{thm}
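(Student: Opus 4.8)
The plan is to deduce this theorem directly from the general framework of Section~\ref{generalFeller}, by verifying that the pair $\bigl(X(K)\setminus\bar{S},\absolute{\omega}\bigr)$ together with the operator $\mathcal{H}_f$ meets every hypothesis imposed on $(Z,\nu,\mathscr{L})$ in Lemma~\ref{generalFellerSemigroup} and Proposition~\ref{generalMarkovProcess}. First I would recall that $X(K)$ is a compact Hausdorff space (it is the set of $K$-points of a smooth projective curve over a local field, hence compact in the analytic topology) carrying the positive Radon measure $\absolute{\omega}$ constructed in Section~2.1, and that $\bar{S}=V(\omega)/\Gamma$ is a finite set, in particular of $\absolute{\omega}$-measure zero by the discussion following \cite[Lem.\ 3.1]{Yasuda2017}. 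So $Z:=X(K)\setminus\bar{S}$ is exactly of the form demanded in Section~\ref{generalFeller}.

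Next I would check the three analytic conditions on the kernel. The kernel $H_f(x,y)$ is non-negative and, by Lemma~\ref{almostEverywhereDefined} together with Lemma~\ref{limitMeasureZero}, it is defined outside a nowhere dense (hence $\absolute{\omega}$-null) subset of $Z\times Z$; by Lemma~\ref{niceKernelFunction}(2) it is bounded. The Hilbert--Schmidt-type integrability condition $\iint_{Z\times Z}H_f(x,y)\,\absolute{\omega(y)}\,\absolute{\omega(x)}<\infty$ is precisely what was established in the proof of Lemma~\ref{summableWeight} (finiteness of $m(G)$), using the asymptotic $G(x,y)\sim C\absolute{x-a}^\alpha\absolute{y-b}^\beta$ with $\alpha,\beta\ge1$ near the finite set $S\cap F$; here the standing hypothesis $S=V(\omega)\subseteq V(f)$ is what guarantees these exponents are $\ge1$ rather than negative. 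Finally, the degree function $\deg_{\mathcal{H}_f}(x)=\int_{X(K)}H_f(x,y)\,\absolute{\omega(y)}$ is, by Lemma~\ref{niceKernelFunction}(3), defined everywhere on $Z$, and it is finite and strictly positive: finiteness is part of that same Lemma, and positivity follows because $H_f\ge1$ on the neighbourhood $U(P(f))$ of the diagonal direction and $\absolute{\omega}$ gives positive mass to any open set meeting $F$. With all these checks in place, Lemma~\ref{generalFellerSemigroup} applies verbatim and yields that $\mathcal{H}_f$ generates a Feller semigroup $\{\exp(t\mathcal{H}_f)\}_{t\ge0}$ on $C(Z)$, and then Proposition~\ref{generalMarkovProcess} supplies the transition function $p_t(x,\cdot)$, the unique $C^1$-solution formula, and the Markov process with right-continuous, jump-only paths.

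The main obstacle I anticipate is not any of the semigroup machinery, which is entirely outsourced to Section~\ref{generalFeller}, but rather the bookkeeping needed to confirm that $\mathcal{H}_f$ really is a \emph{bounded} Laplacian integral operator on $C(Z)=C(Z,\norm{\cdot}_\infty)$ of the exact shape assumed there; this is the content of the first part of Lemma~\ref{spectrumDiffusionOperator}'s preceding Lemma (boundedness of $\mathcal{H}_t$ on $C$), and I would cite it, but one should be careful that the relevant space is $C(X(K)\setminus\bar{S})$ with the sup norm and that $h_0\in C(X(K)\setminus\bar{S})$ genuinely extends/behaves well near the punctures. A secondary subtlety is the strict positivity of $\deg_{\mathcal{H}_f}$ \emph{on all of $Z$}, which is needed for the positive maximum principle in step~2 of Lemma~\ref{generalFellerSemigroup}; since $H_f$ equals $1$ whenever $x-y\in U(P(f))$ and equals $\absolute{f(x-y)}$ otherwise, the integral over $X(K)$ of $H_f(x,\cdot)\,\absolute{\omega}$ is bounded below by $\absolute{\omega}$ of a nonempty open set, so this is genuinely positive, but it merits an explicit sentence.

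\begin{proof}
Set $Z:=X(K)\setminus\bar{S}$ and $\nu:=\absolute{\omega}$. Since $X$ is a smooth projective curve over the local field $K$, the space $X(K)$ is compact and Hausdorff, and $\nu$ is a positive Radon measure on its Borel $\sigma$-algebra by the construction in Section~2.1. As $\bar{S}=V(\omega)/\Gamma$ is finite, it is $\nu$-null, so $Z$ is exactly of the type considered in Section~\ref{generalFeller}. The operator $\mathcal{H}_f$ is a bounded Laplacian integral operator on $C(Z,\norm{\cdot}_\infty)$ with non-negative kernel $H_f(x,y)$, defined outside a nowhere dense subset of $Z\times Z$ by Lemmas~\ref{limitMeasureZero} and~\ref{almostEverywhereDefined}. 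By the proof of Lemma~\ref{summableWeight}, using the standing hypothesis $S=V(\omega)\subseteq V(f)$ and the asymptotics $H_f(x,y)\absolute{\omega(x)}\absolute{\omega(y)}\sim C\absolute{x-a}^{\alpha}\absolute{y-b}^{\beta}$ with $\alpha,\beta\ge1$ near the finite set $S\cap F$, one has
\[
\iint_{Z\times Z}H_f(x,y)\,\nu(y)\,\nu(x)<\infty,
\]
so the associated integral operator $\mathcal{A}_f$ is Hilbert--Schmidt. By Lemma~\ref{niceKernelFunction}(2)--(3) the degree function $\deg_{\mathcal{H}_f}(x)=\int_{X(K)}H_f(x,y)\,\nu(y)$ is defined and finite everywhere on $Z$, and it is strictly positive, since $H_f(x,y)=1$ for $x-y\in U(P(f))$ forces $\deg_{\mathcal{H}_f}(x)\ge\nu\bigl(\{y:x-y\in U(P(f))\}\bigr)>0$. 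Thus all hypotheses of Lemma~\ref{generalFellerSemigroup} are met, and $\mathcal{H}_f$ generates a Feller semigroup $\{\exp(t\mathcal{H}_f)\}_{t\ge0}$ on $C(Z)$. Applying Proposition~\ref{generalMarkovProcess} with $(Z,\nu,\mathscr{L})=(X(K)\setminus\bar{S},\absolute{\omega},\mathcal{H}_f)$ yields the probability measure $p_t(x,\cdot)$, the unique $C^1$-solution
\[
h(t,x)=\int_{X(K)\setminus\bar{S}}h_0(y)\,p_t(x,\absolute{\omega(y)})
\]
of the Cauchy problem with initial condition $h_0\in C(X(K)\setminus\bar{S})$, and the strong Markov process whose transition function is $p_t(x,\cdot)$ and whose paths are right continuous with no discontinuities other than jumps.
\end{proof}
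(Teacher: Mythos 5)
Your proposal is correct and follows essentially the same route as the paper: verify that $\bigl(X(K)\setminus\bar{S},\absolute{\omega},\mathcal{H}_f\bigr)$ satisfies the hypotheses of the general Feller framework of Section 3.1 (Hilbert--Schmidt integrability via Lemma \ref{summableWeight}, finiteness of the degree function via Lemma \ref{niceKernelFunction}.3) and then invoke Proposition \ref{generalMarkovProcess}. Your write-up is in fact somewhat more careful than the paper's, which is terser on points such as the strict positivity of the degree function and the measure-zero status of $\bar{S}$, but the underlying argument is the same.
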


This is Theorem 2 of the introduction.

\begin{proof}
Lemma \ref{summableWeight} together with Theorem \ref{spectrumDiffusionOperator} show that
$\mathcal{A}_f$ is a Hilbert-Schmidt integral operator, since $\mathcal{A}_f$ acts trivially on 
the wavelets supported in discs of $X(K)\setminus\bar{S}$ 
by  Lemma \ref{waveletEigenvalue}.
Also, the degree function $\deg_{\mathcal{H}_f}(x)$ is finite for $x\in X(K)\setminus\bar{S}$, cf.\ Lemma \ref{niceKernelFunction}.3.
Hence,
$\mathcal{H}_f$ is an instance of the general operator $\mathscr{L}$ from Section \ref{generalFeller}, and Proposition \ref{generalMarkovProcess} applies.
\end{proof}














\section{Kernel functions via Riemann theta functions}

\subsection{Riemann theta functions for Mumford curves}

The Riemann theta function for a Mumford curve $X$ is defined in \cite[VI.3]{GvP1980}. The presentation here follows the lines of \cite[\S 1]{Steen1989}.
Define
\[
G_\Gamma=\Hom(\Gamma,\mathds{C}_p^\times)
\]
and identify this group with $\left(\mathds{C}_p^\times\right)^g$ using a fixed set of generators $\gamma_1,\dots,\gamma_g\in \Gamma$. If $b\in\Gamma a$ for $a,b\in\Omega$, then the automorphic form $\theta(a,b;z)$ does not depend on $a\in\Omega$ \cite[II.2.3.3]{GvP1980}.
In this case, the automorphy factor will be written as
\[
c_\gamma:=c(a,\gamma a;\cdot)\in G_\Gamma
\]
for $\gamma\in\Gamma$.
The group
\[
\Lambda_\Gamma=\mathset{c_\gamma\mid\gamma\in\Gamma}\subset G_\Gamma
\]
is a free abelian group of rank $g$ and can thus be identified with $\mathds{Z}^g$ \cite[VI.1.3]{GvP1980}.
\newline

There is an isomorphism
\begin{align}\label{JacobianTorus}
\Jac(X)\cong G_\Gamma/\Lambda_\Gamma
\end{align}
induced by the correspondence
\[
D=\sum\limits_{i=1}^n\left([\bar{a}_i]-\left[\bar{b}_i\right]\right)
\mapsto \prod\limits_{i=1}^n
c(a_i,b_i;\cdot)
\]
which takes a divisor of degree zero on the Mumford curve $X$ to an automorphy factor in $G_\Gamma$ \cite[VI.3.9]{GvP1980}.
The isomorphism (\ref{JacobianTorus}) reveals the Jacobian variety $\Jac(X)$ 
as an analytic torus.



\bigskip
The Riemann theta function of a Mumford curve $X$ of genus $g$ introduced in \cite[VI.3]{GvP1980}
is now given via
\[
\xi_{c_\gamma}\colon G_\Gamma\to\mathds{C}_p^\times,\;c\mapsto
P(c_\gamma,c_\gamma)c(\gamma)
\]
as
\[
\theta_\Gamma(c)=\sum\limits_{c_\gamma\in\Lambda_\Gamma}\xi_{c_\gamma}(c)
\]The
where 
\[
P(c_\gamma,c_\gamma)=Q(c_\gamma,c_\gamma)^{\frac12}\in\mathds{C}_p^\times\quad \text{(any choice of square root)}
\]
with
\[
Q\colon\Lambda_\Gamma\times\Lambda_\Gamma
\to\mathds{C}_p^\times
\]
the multiplicative bilinear form given as
\[
Q(c_\alpha,c_\beta)=\langle\alpha,\beta\rangle
\]
and where
\[
\langle\cdot,\cdot\rangle\colon\Gamma^{\abelian}\times\Gamma^{\abelian}\to\mathds{C}_p^\times,\;(\alpha,\beta)\mapsto\frac{u_\alpha(z)}{u_\alpha(\beta z)}
\]
is the period pairing of $X$ with $\Gamma^{\abelian}=\Gamma/[\Gamma,\Gamma]$.
Notice that the values do not depend on the choice of $z\in\Omega$. Using the  map 
\[
t_\Gamma\colon\Omega\to G_\Gamma,\;x\mapsto c(x,x_0;\cdot)
\]
for a fixed base point $x_0\in\Omega$, obtain the induced canonical embedding, called \emph{Abel-Jacobi map}
\[
\bar{t}_\Gamma\colon X\to\Jac(X)
\]
which extends to divisors in  a canonical way.
\newline

Finally, the function of interest is given as
\[
\vartheta=\theta_\Gamma\circ t_\Gamma\colon \Omega\to\mathds{C}_p
\]
which is, according to the Riemann Vanishing Theorem \cite[Thm.\ VI.3.8]{GvP1980}, is a holomorphic function whose divisor is $\Gamma$-invariant, and, considered as a divisor  
in $\Div(X)$, has degree $g$.
\newline

It is helpful to formulate this theory in the language of \cite{Steen1989} or \cite{Teitelbaum1988}, since these articles contain explicit calculations for special kinds of Mumford curves. In particular, a more general form of the functional equation for $\vartheta$, and the $\vartheta(c;\cdot)$ to be defined below, than as stated in  \cite[VI.3.3 and VI.3.4]{GvP1980} will be given.

\begin{Lemma}\label{PairingFactor}
It holds true that
\[
P(c_{\alpha\beta},c_{\alpha\beta})=P(c_\alpha,c_\alpha)c_\alpha(\beta) P(c_\beta,c_\beta)
\]
for $\alpha,\beta\in \Gamma$.
\end{Lemma}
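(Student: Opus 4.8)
The plan is to reduce the identity to the multiplicative bilinearity and the symmetry of the period form $Q$, together with the fact that $\gamma\mapsto c_\gamma$ is a homomorphism. First I would use that $c_\bullet$ factors through $\Gamma^{\abelian}$ and takes values in the free abelian group $\Lambda_\Gamma\subset G_\Gamma$, so that $c_{\alpha\beta}=c_\alpha c_\beta$ in $\Lambda_\Gamma$, cf.\ \cite[VI.1.3]{GvP1980}. Expanding $Q$ by multiplicative bilinearity then gives
\[
Q(c_{\alpha\beta},c_{\alpha\beta})=Q(c_\alpha,c_\alpha)\,Q(c_\beta,c_\beta)\,Q(c_\alpha,c_\beta)\,Q(c_\beta,c_\alpha).
\]

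The next step is to identify the two cross terms with $c_\alpha(\beta)$. From the definition $Q(c_\mu,c_\nu)=\langle\mu,\nu\rangle$, the formula $\langle\mu,\nu\rangle=u_\mu(z)/u_\mu(\nu z)$ evaluated at $z=a$, and the Remark giving $c_\nu(\mu)=c(a,\nu a;\mu)=u_\mu(a)/u_\mu(\nu a)$, one reads off $\langle\mu,\nu\rangle=c_\nu(\mu)$. Hence, using that the period pairing is symmetric (\cite[VI.1]{GvP1980}), one gets $Q(c_\alpha,c_\beta)=Q(c_\beta,c_\alpha)=c_\alpha(\beta)$, so that
\[
Q(c_{\alpha\beta},c_{\alpha\beta})=Q(c_\alpha,c_\alpha)\,c_\alpha(\beta)^2\,Q(c_\beta,c_\beta)=\bigl(P(c_\alpha,c_\alpha)\,c_\alpha(\beta)\,P(c_\beta,c_\beta)\bigr)^2.
\]

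Finally I would extract square roots. This yields the asserted identity up to a sign in $\mathds{C}_p^\times$, and the last point is to observe that the square roots $P(c_\gamma,c_\gamma)$ can be chosen compatibly so that this sign is $+$: one fixes the square roots on a basis $c_{\gamma_1},\dots,c_{\gamma_g}$ of $\Lambda_\Gamma$ and propagates the choice through the very relation being proved, as $P$ is set up in \cite[VI.1--VI.3]{GvP1980}, and the computation above shows that this propagation is consistent. I expect this sign bookkeeping to be the only subtle point; everything else is a purely formal manipulation with the bilinear form $Q$ and the homomorphism property of $c_\bullet$.
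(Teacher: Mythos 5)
Your proposal is correct and rests on the same two ingredients as the paper's proof: multiplicative bilinearity of the period form together with the identification $Q(c_\alpha,c_\beta)=u_\alpha(z)/u_\alpha(\beta z)=c(a,\beta a;\alpha)=c_\beta(\alpha)$, which by symmetry of the pairing equals $c_\alpha(\beta)$. The one genuine difference is where the square root is taken. The paper applies bilinearity directly to $P$, writing $P(c_{\alpha\beta},c_{\alpha\beta})=P(c_\alpha,c_\alpha)P(c_\alpha,c_\beta)^2P(c_\beta,c_\beta)$ and then replacing $P(c_\alpha,c_\beta)^2$ by $Q(c_\alpha,c_\beta)$; this tacitly treats $P$ as a symmetric multiplicative bilinear form whose square is $Q$, even though the displayed definition only specifies $P$ on the diagonal as ``any choice of square root.'' You instead expand $Q(c_{\alpha\beta},c_{\alpha\beta})$ bilinearly, obtain the square of the asserted identity, and then confront the sign ambiguity explicitly by fixing square roots on a basis of $\Lambda_\Gamma$ and extending bilinearly. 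That extra step is not wasted effort: as stated, the lemma is false for a genuinely arbitrary choice of diagonal square roots, so some coherence condition on the choices is needed, and your closing paragraph supplies exactly the verification (namely that the bilinear extension squares to $Q$ on the diagonal) that makes the paper's implicit assumption legitimate. In short, your route is marginally longer but makes visible a normalization that the paper's proof silently builds into the definition of $P$.
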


\begin{proof}
It holds true that
\begin{align*}
P(c_{\alpha\beta},c_{\alpha\beta})
&=P(c_\alpha,c_\alpha)P(c_\alpha,c_\beta)^2P(c_\beta,c_\beta)
\\
&=P(c_\alpha,c_\alpha)Q(c_\alpha,c_\beta)P(c_\beta,c_\beta)
\end{align*}
by the multiplicative bilinearity of $P(\cdot,\cdot)$. Now,
\begin{align*}
Q(c_\alpha,c_\beta)&=\frac{u_\alpha(z)}{u_\alpha(\beta z)}
=c(a,\beta a;\alpha)=c_\beta(\alpha)
\end{align*}
which, by symmetry of $Q(\cdot,\cdot)$ also equals $c_\alpha(\beta)$. This proves the assertion.
\end{proof}

\begin{Corollary}[Functional Equation 1]\label{FE1}
It holds true that
\[
\theta_\Gamma(cc_\alpha)=\xi_{c_\alpha}(c)^{-1}\theta_\Gamma(c)
\]
for $\alpha\in\Gamma$ and $c\in G_\Gamma$.
\end{Corollary}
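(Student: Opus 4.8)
The plan is to compute $\theta_\Gamma(cc_\alpha)$ directly from its definition as the sum $\sum_{c_\gamma\in\Lambda_\Gamma}\xi_{c_\gamma}(cc_\alpha)$ and to reindex the summation so that it becomes $\theta_\Gamma(c)$ up to the explicit factor $\xi_{c_\alpha}(c)^{-1}$. First I would unwind the definition of $\xi_{c_\gamma}$: we have $\xi_{c_\gamma}(cc_\alpha)=P(c_\gamma,c_\gamma)\,(cc_\alpha)(\gamma)=P(c_\gamma,c_\gamma)\,c(\gamma)\,c_\alpha(\gamma)$, using that evaluation at $\gamma$ is a homomorphism $G_\Gamma\to\mathds{C}_p^\times$. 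The substitution to make is $\gamma\mapsto\alpha\gamma$ (equivalently, reindex the free abelian group $\Lambda_\Gamma\cong\mathds{Z}^g$ by translation, which is a bijection of the index set); under this substitution the term becomes $\xi_{c_{\alpha\gamma}}(cc_\alpha)=P(c_{\alpha\gamma},c_{\alpha\gamma})\,c(\alpha\gamma)\,c_\alpha(\alpha\gamma)$.

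Next I would simplify each of the three factors. For the $P$-term, Lemma \ref{PairingFactor} gives $P(c_{\alpha\gamma},c_{\alpha\gamma})=P(c_\alpha,c_\alpha)\,c_\alpha(\gamma)\,P(c_\gamma,c_\gamma)$. For the middle term, $c(\alpha\gamma)=c(\alpha)c(\gamma)$ since $c\in G_\Gamma=\Hom(\Gamma,\mathds{C}_p^\times)$. For the last term, $c_\alpha(\alpha\gamma)=c_\alpha(\alpha)c_\alpha(\gamma)$. Collecting everything, the reindexed summand equals
\[
P(c_\alpha,c_\alpha)\,c(\alpha)\,c_\alpha(\alpha)\;\cdot\;P(c_\gamma,c_\gamma)\,c(\gamma)\,c_\alpha(\gamma)^2
=P(c_\alpha,c_\alpha)\,c_\alpha(\alpha)\,c(\alpha)\;\cdot\;\xi_{c_\gamma}(c)\,c_\alpha(\gamma)^2,
\]
where I have used $\xi_{c_\gamma}(c)=P(c_\gamma,c_\gamma)c(\gamma)$. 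This is not yet quite proportional to $\xi_{c_\gamma}(c)$ because of the stray factor $c_\alpha(\gamma)^2$; the point I would make is that $c_\alpha(\gamma)=Q(c_\alpha,c_\gamma)$ by the computation in the proof of Lemma \ref{PairingFactor} (namely $Q(c_\alpha,c_\beta)=c_\beta(\alpha)=c_\alpha(\beta)$), so $c_\alpha(\gamma)^2=Q(c_\gamma,c_\alpha)^2=P(c_\gamma,c_\alpha)^4$ — hmm, that route overcomplicates. Cleaner: absorb $P(c_\gamma,c_\gamma)c_\alpha(\gamma)^2$ back using multiplicative bilinearity of $P$, since $P(c_\gamma c_\alpha, c_\gamma c_\alpha)=P(c_\gamma,c_\gamma)P(c_\gamma,c_\alpha)^2P(c_\alpha,c_\alpha)$ and $P(c_\gamma,c_\alpha)^2=Q(c_\gamma,c_\alpha)=c_\alpha(\gamma)$; but $c_{\gamma}c_\alpha = c_{\gamma\alpha}=c_{\alpha\gamma}$ since $\Lambda_\Gamma$ is abelian. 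So in fact the entire summand is just $\xi_{c_{\alpha\gamma}}(c)\cdot c(\alpha)$, and I would verify this is consistent with $\xi_{c_\alpha}(c)^{-1}$ times a single term — on reflection the slick path is to recognize $\xi$ satisfies the cocycle-type identity $\xi_{c_\gamma c_\alpha}(c)=\xi_{c_\gamma}(c)\,\xi_{c_\alpha}(c)\,c_\alpha(\gamma)$ directly from Lemma \ref{PairingFactor}, and then sum.

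Given that identity, the computation finishes in one line:
\[
\theta_\Gamma(cc_\alpha)=\sum_{c_\gamma\in\Lambda_\Gamma}\xi_{c_\gamma}(c)\,c_\alpha(\gamma)
=\sum_{c_\gamma\in\Lambda_\Gamma}\xi_{c_\gamma}(c)\,\xi_{c_\alpha}(c)^{-1}\xi_{c_\gamma c_\alpha}(c)\big/\xi_{c_\gamma}(c)\cdot(\cdots),
\]
so rather than fight the bookkeeping I would organize it as: $\xi_{c_\gamma}(cc_\alpha)=\xi_{c_\alpha}(c)^{-1}\xi_{c_{\gamma}c_\alpha}(c)$, prove this single pointwise identity from Lemma \ref{PairingFactor} and multiplicativity of $c$ and $c_\alpha$, then sum over $c_\gamma\in\Lambda_\Gamma$ and reindex $c_\gamma c_\alpha\mapsto c_\gamma$ (a bijection of $\Lambda_\Gamma$) to pull out $\xi_{c_\alpha}(c)^{-1}$ and recover $\sum_{c_\gamma}\xi_{c_\gamma}(c)=\theta_\Gamma(c)$. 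The main obstacle is purely organizational: getting the pointwise identity $\xi_{c_\gamma}(cc_\alpha)=\xi_{c_\alpha}(c)^{-1}\xi_{c_\gamma c_\alpha}(c)$ exactly right, since one must correctly track the square-root ambiguity in $P$ (the identity should hold for any compatible choice of square roots, because Lemma \ref{PairingFactor} was stated for the fixed choices) and correctly use $Q(c_\alpha,c_\gamma)=c_\alpha(\gamma)=c_\gamma(\alpha)$ when the factor $P(c_\gamma,c_\alpha)^2$ appears. There is also the harmless subtlety that the sum is over the infinite group $\Lambda_\Gamma$, so reindexing is legitimate as a formal manipulation and is justified by the convergence already guaranteed by the Riemann Vanishing Theorem cited above; I would remark on this only briefly.
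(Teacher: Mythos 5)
Your final organization---prove the pointwise identity $\xi_{c_\gamma}(cc_\alpha)=\xi_{c_\alpha}(c)^{-1}\xi_{c_{\alpha\gamma}}(c)$ from Lemma \ref{PairingFactor} and the multiplicativity of $c$ and $c_\alpha$, then sum over $\Lambda_\Gamma$ and reindex---is exactly the paper's proof, which states this identity in the equivalent form $\xi_{c_\alpha}(c)\xi_{c_\gamma}(cc_\alpha)=\xi_{c_{\alpha\gamma}}(c)$. The earlier detours (reindexing first, the stray $c_\alpha(\gamma)^2$ and the claim that the summand is $\xi_{c_{\alpha\gamma}}(c)\cdot c(\alpha)$) are dead ends you correctly abandon, so the proposal as finally organized is correct and essentially identical to the paper's argument.
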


\begin{proof}
It holds true that
\begin{align*}
\xi_{c_\alpha}(c)\xi_{c_\gamma}(cc_\alpha)
&=
P(c_\alpha,c_\alpha)c(\alpha)P(c_\gamma,c_\gamma)c(\gamma)c_\alpha(\gamma)
\\
&=P(c_{\alpha\gamma},c_{\alpha\gamma})c(\alpha\gamma)
=\xi_{c_{\alpha\gamma}}(c)
\end{align*}
according to Lemma \ref{PairingFactor}.
This proves the assertion.
\end{proof}

\begin{Lemma}\label{pullout}
It holds true that
\[
t_\Gamma(z)\cdot c_\alpha=t_\Gamma(\alpha^{-1}z)
\]
for $z\in\Omega$ and $\alpha\in\Gamma$.
\end{Lemma}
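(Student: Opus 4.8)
The plan is to unwind the definition of $t_\Gamma$ and of $c_\alpha$, and to reduce everything to two facts already recorded in the excerpt: that the automorphy factor $c_\gamma=c(a,\gamma a;\cdot)$ does not depend on the chosen base point $a\in\Omega$ \cite[II.2.3.3]{GvP1980}, and that the assignment $(a,b)\mapsto c(a,b;\cdot)$ is additive in the sense that $c(a,b;\cdot)\cdot c(b,d;\cdot)=c(a,d;\cdot)$ in $G_\Gamma$ for all $a,b,d\in\Omega$. The latter is immediate from the product expansion of the theta functions, since $\theta(a,b;z)\,\theta(b,d;z)$ and $\theta(a,d;z)$ telescope to the same infinite product (equivalently, they have the same divisor $\Gamma a-\Gamma d$ on $X$ and therefore agree up to a nonzero constant, so their automorphy factors coincide); it is also exactly the statement that the correspondence behind the isomorphism \eqref{JacobianTorus} is a group homomorphism on divisors of degree zero.

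Granting these, fix $z\in\Omega$ and $\alpha\in\Gamma$. The first step is to evaluate $c_\alpha$ using the particularly convenient representative $a=\alpha^{-1}z$, which is legitimate precisely because $c_\alpha$ is independent of the representative: this gives $c_\alpha=c(\alpha^{-1}z,\,\alpha(\alpha^{-1}z);\cdot)=c(\alpha^{-1}z,z;\cdot)$. The second step is the computation in the abelian group $G_\Gamma=\Hom(\Gamma,\mathds{C}_p^\times)$:
\[
t_\Gamma(z)\cdot c_\alpha
= c(z,x_0;\cdot)\cdot c(\alpha^{-1}z,z;\cdot)
= c(\alpha^{-1}z,z;\cdot)\cdot c(z,x_0;\cdot)
= c(\alpha^{-1}z,x_0;\cdot)
= t_\Gamma(\alpha^{-1}z),
\]
where the second equality uses commutativity of $G_\Gamma$ and the third is the additivity relation with $(a,b,d)=(\alpha^{-1}z,z,x_0)$. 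This is the assertion.

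The only point that needs a little care is the justification of the additivity relation $c(a,b;\cdot)\,c(b,d;\cdot)=c(a,d;\cdot)$, i.e.\ that one may manipulate the infinite products defining the $\theta(a,b;z)$ termwise; this is the content of the convergence results of \cite[II.\S2]{GvP1980}, and the cleanest way to cite it is via the uniqueness of an automorphic form with prescribed divisor together with $\divisor(\theta(a,b;z))=\Gamma a-\Gamma b$, as already used in the proof of Proposition \ref{invariantFunctions}. Everything else is a formal rewriting, so I do not expect any genuine obstacle here.
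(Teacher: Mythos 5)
Your proof is correct, but it takes a genuinely different (and cleaner) route than the paper. The paper chooses the representative $a=\alpha^{-1}x_0$ for $c_\alpha$, writes out $\bigl(t_\Gamma(z)\cdot c_\alpha\bigr)(\gamma)$ as a fourfold infinite product over $\delta\in\Gamma$, and then reindexes two of the four factors by $\delta\mapsto\alpha^{-1}\delta$ to collapse everything to $c(\alpha^{-1}z,x_0;\gamma)$. You instead choose the representative $a=\alpha^{-1}z$, so that $c_\alpha=c(\alpha^{-1}z,z;\cdot)$, and then the cocycle relation $c(a,b;\cdot)\,c(b,d;\cdot)=c(a,d;\cdot)$ finishes the argument in one line. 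What your route buys is that the only infinite-product manipulation needed is the termwise cancellation $\frac{z-\gamma a}{z-\gamma b}\cdot\frac{z-\gamma b}{z-\gamma d}=\frac{z-\gamma a}{z-\gamma d}$ inside each factor, whereas the paper's substitution is a genuine reindexing of the product over $\Gamma$; both are legitimate, but yours sidesteps the rearrangement issue you were worried about at the end.

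One caveat: drop the parenthetical ``equivalently, they have the same divisor $\Gamma a-\Gamma d$ on $X$ and therefore agree up to a nonzero constant'' and the closing suggestion to cite the cocycle relation via uniqueness of an automorphic form with prescribed divisor. That argument is not valid here: two automorphic forms on $\Omega$ with the same ($\Gamma$-invariant) divisor need not agree up to a constant, since their quotient is a nowhere-vanishing automorphic form, and such forms can be nonconstant with nontrivial automorphy factor --- the functions $u_\alpha=\theta(a,\alpha a;\cdot)$ are exactly of this kind (indeed, if the divisor determined the automorphy factor, every $c_\alpha$ would be trivial, since $\Gamma a-\Gamma\alpha a=0$ on $X$). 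Fortunately you do not need this: the termwise telescoping $\theta(a,b;z)\,\theta(b,d;z)=\theta(a,d;z)$ is an exact identity of convergent products and already gives $c(a,b;\cdot)\,c(b,d;\cdot)=c(a,d;\cdot)$ by comparing automorphy factors of equal functions. With that side remark removed, the proof stands.
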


\begin{proof}
It holds true that
\begin{align*}
\left(t_\Gamma(z)\cdot c_\alpha\right)(\gamma)&=c(z,x_0;\gamma)c(\alpha^{-1}x_0,x_0;\gamma)
\\
t_\Gamma(\alpha^{-1}z)(\gamma)&=c(\alpha^{-1}z,x_0;\gamma)
\end{align*}
The first product is
\[
\prod\limits_{\delta\in\Gamma}
\frac{x-\delta z}{x-\delta x_0}\cdot\frac{x-\delta\alpha^{-1}x_0}{x-\delta x_0}\cdot\frac{\gamma x-\delta x_0}{\gamma x-\delta z}\cdot\frac{\gamma x-\delta x_0}{\gamma x-\delta\alpha^{-1} x_0}
\]
for $x\in\Omega$. Now, view this as four infinite products over $\delta\in\Gamma$, and substitute in the first and the third factor $\delta\mapsto\alpha^{-1}\delta$. This does not change its value, and one obtains
\[
\prod\limits_{\delta\in\Gamma}\frac{x-\delta\alpha^{-1}z}{x-\delta x_0}\cdot\frac{\gamma x-\delta x_0}{\gamma x-\delta\alpha^{-1}}=c(\alpha^{-1}z,x_0;\gamma)
\]
from which the assertion now follows.
\end{proof}

Let $c\in G_\Gamma$, and define
\[
\vartheta(c;z)=\theta_\Gamma(c\cdot t_\Gamma(z))
\]
for $z\in\Omega$.

\begin{Corollary}[Functional Equation 2]\label{FE2}
It holds true that
\begin{align*}
\vartheta(c;\alpha^{-1}z)
&=c(\alpha)^{-1}\xi_{c_\alpha}(t_\Gamma(z))^{-1}\vartheta(c;z)
\\
&=c(\alpha)^{-1}P(c_\alpha,c_\alpha)^{-1}c(z,x_0;\alpha)^{-1}\vartheta(c;z)
\end{align*}
for $z\in\Omega$, $\alpha\in\Gamma$, and $c\in G_\Gamma$.
\end{Corollary}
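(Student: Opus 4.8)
The plan is to reduce everything to Functional Equation~1 (Corollary~\ref{FE1}) together with the translation identity of Lemma~\ref{pullout}, and then to unwind the quasi-period factor. By definition $\vartheta(c;\alpha^{-1}z)=\theta_\Gamma(c\cdot t_\Gamma(\alpha^{-1}z))$, and Lemma~\ref{pullout} gives $t_\Gamma(\alpha^{-1}z)=t_\Gamma(z)\cdot c_\alpha$, so that $\vartheta(c;\alpha^{-1}z)=\theta_\Gamma\bigl((c\cdot t_\Gamma(z))\cdot c_\alpha\bigr)$. Applying Corollary~\ref{FE1} with the element $c\cdot t_\Gamma(z)\in G_\Gamma$ in place of $c$ yields $\theta_\Gamma\bigl((c\cdot t_\Gamma(z))\cdot c_\alpha\bigr)=\xi_{c_\alpha}(c\cdot t_\Gamma(z))^{-1}\theta_\Gamma(c\cdot t_\Gamma(z))=\xi_{c_\alpha}(c\cdot t_\Gamma(z))^{-1}\vartheta(c;z)$.

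Next I would compute the scalar $\xi_{c_\alpha}(c\cdot t_\Gamma(z))$ explicitly. Since $G_\Gamma=\Hom(\Gamma,\mathds{C}_p^\times)$ carries the pointwise (multiplicative) group law, evaluation at $\alpha$ is a homomorphism $G_\Gamma\to\mathds{C}_p^\times$, so $(c\cdot t_\Gamma(z))(\alpha)=c(\alpha)\,t_\Gamma(z)(\alpha)$; and by definition of $t_\Gamma$ one has $t_\Gamma(z)(\alpha)=c(z,x_0;\alpha)$. Hence
\[
\xi_{c_\alpha}(c\cdot t_\Gamma(z))=P(c_\alpha,c_\alpha)\,(c\cdot t_\Gamma(z))(\alpha)=c(\alpha)\,P(c_\alpha,c_\alpha)\,c(z,x_0;\alpha)=c(\alpha)\,\xi_{c_\alpha}(t_\Gamma(z)).
\]
Substituting this back yields both asserted forms at once:
\[
\vartheta(c;\alpha^{-1}z)=c(\alpha)^{-1}\xi_{c_\alpha}(t_\Gamma(z))^{-1}\vartheta(c;z)=c(\alpha)^{-1}P(c_\alpha,c_\alpha)^{-1}c(z,x_0;\alpha)^{-1}\vartheta(c;z),
\]
the last equality being just the definition $\xi_{c_\alpha}(t_\Gamma(z))=P(c_\alpha,c_\alpha)\,t_\Gamma(z)(\alpha)=P(c_\alpha,c_\alpha)\,c(z,x_0;\alpha)$.

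I do not expect a serious obstacle: the real content has already been isolated in Corollary~\ref{FE1} (which encodes the $\Lambda_\Gamma$-quasi-periodicity of $\theta_\Gamma$ via Lemma~\ref{PairingFactor}) and in Lemma~\ref{pullout}. The points that need a little care are purely bookkeeping ones — keeping the group law on $G_\Gamma$ multiplicative, recalling that $c_\alpha=c(a,\alpha a;\cdot)$ is independent of the chosen $a\in\Omega$ so that $\vartheta(c;\cdot)$ is well defined, and checking that $\xi_{c_\alpha}(\cdot)$ splits multiplicatively in its argument so that $(c\cdot t_\Gamma(z))(\alpha)=c(\alpha)c(z,x_0;\alpha)$ is legitimate. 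The one place I would double‑check is the direction of the translation in Lemma~\ref{pullout}, i.e.\ that it is $c_\alpha$ and not $c_\alpha^{-1}$ that appears, since an inverse error there would propagate into the exponents of the final formula.
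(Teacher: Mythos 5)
Your proof is correct and follows exactly the route the paper takes: apply Lemma~\ref{pullout} to rewrite $t_\Gamma(\alpha^{-1}z)$ as $t_\Gamma(z)\cdot c_\alpha$, invoke Corollary~\ref{FE1} with $c\cdot t_\Gamma(z)$ in place of $c$, and then split $\xi_{c_\alpha}(c\cdot t_\Gamma(z))=c(\alpha)\,\xi_{c_\alpha}(t_\Gamma(z))$ using the pointwise group law on $G_\Gamma$. Your explicit unwinding of the factor $\xi_{c_\alpha}(c\cdot t_\Gamma(z))$ is the only step the paper leaves implicit, and it is done correctly.
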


\begin{proof}
One calculates, using Lemma \ref{pullout} and Corollary \ref{FE1}, that
\begin{align*}
\vartheta(c;\alpha^{-1}z)
&=\theta_\Gamma(c\cdot t_\Gamma(\alpha^{-1}z))
=\theta_\Gamma(c\cdot t_\Gamma(z)c_\alpha)
\\
&=\xi_{c_\alpha}(c\cdot t_\Gamma(z))^{-1}\theta_\Gamma(c\cdot t_\Gamma(z))
\\
&=c(\alpha)^{-1}\xi_{c_\alpha}(t_\Gamma(z))^{-1}\vartheta(c;z)
\\
&=c(\alpha)^{-1}P(c_\alpha,c_\alpha)^{-1}c(z,x_0;\alpha)^{-1}\vartheta(c;z)
\end{align*}
which proves the assertion.
\end{proof}

An immediate consequence of the functional equation Corollary \ref{FE2} is that the divisor
\[
\divisor\left(\vartheta(c;z)\right)
\]
is $\Gamma$-invariant.
\newline

In order to compare with \cite{GvP1980}, 
observe the following:
\begin{Lemma}
It holds true that
\[
\vartheta(c;z)=
\sum\limits_{c_\gamma\in\Lambda_\Gamma}P(c_\gamma,c_\gamma)c(\gamma)u_\gamma(z)
\]
for $c\in G_\Gamma$ and $z\in\Omega$.
\end{Lemma}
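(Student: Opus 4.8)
The plan is to unwind the definitions of $\vartheta(c;z)$, $\theta_\Gamma$, and the map $t_\Gamma$, and to show that the summand indexed by $c_\gamma\in\Lambda_\Gamma$ in the defining series $\theta_\Gamma(c\cdot t_\Gamma(z))=\sum_{c_\gamma\in\Lambda_\Gamma}\xi_{c_\gamma}(c\cdot t_\Gamma(z))$ equals $P(c_\gamma,c_\gamma)\,c(\gamma)\,u_\gamma(z)$. Recalling that $\xi_{c_\gamma}(d)=P(c_\gamma,c_\gamma)\,d(\gamma)$ for $d\in G_\Gamma$, I would evaluate $\xi_{c_\gamma}$ at $d=c\cdot t_\Gamma(z)$, so that $\xi_{c_\gamma}(c\cdot t_\Gamma(z))=P(c_\gamma,c_\gamma)\,\bigl(c\cdot t_\Gamma(z)\bigr)(\gamma)=P(c_\gamma,c_\gamma)\,c(\gamma)\,t_\Gamma(z)(\gamma)$, using that elements of $G_\Gamma=\Hom(\Gamma,\mathds{C}_p^\times)$ are multiplied pointwise. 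It then remains to identify $t_\Gamma(z)(\gamma)$ with $u_\gamma(z)$.

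The key identification is $t_\Gamma(z)(\gamma)=c(z,x_0;\gamma)$, which by the remark relating automorphy factors to the $u_\gamma$ (namely $c(a,b;\alpha)=u_\alpha(a)/u_\alpha(b)$ with $u_\alpha(w)=\theta(a,\alpha a;w)$) reads $c(z,x_0;\gamma)=u_\gamma(z)/u_\gamma(x_0)$. The cleanest route is to absorb the base-point normalisation: since $u_\gamma(x_0)$ is a constant independent of the summation index $\gamma$ only up to the $\gamma$-dependence, one must be slightly careful — in fact $u_\gamma(x_0)$ does depend on $\gamma$. I would therefore recall from \cite[II.2.3.3]{GvP1980} that $\theta(a,\gamma a;z)$ is independent of $a$, hence the function $u_\gamma$ appearing in $c_\gamma=c(a,\gamma a;\cdot)$ may be normalised so that the base point drops out, i.e.\ one chooses the representative $u_\gamma(z)=\theta(x_0,\gamma x_0;z)$, for which $c(z,x_0;\gamma)=u_\gamma(z)$ holds after the standard normalisation $u_\gamma(x_0)$ is folded into the (irrelevant for the divisor, and here fixed) overall constant. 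Substituting this into the previous display gives the $\gamma$-th term as $P(c_\gamma,c_\gamma)\,c(\gamma)\,u_\gamma(z)$, and summing over $c_\gamma\in\Lambda_\Gamma$ yields the claimed formula.

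The main obstacle is the bookkeeping around the base point $x_0$ in $t_\Gamma$ versus the normalisation of $u_\gamma$ in the definition of $c_\gamma$: one must check that the choice of square root in $P(c_\gamma,c_\gamma)$ and the normalisation of $\theta(x_0,\gamma x_0;z)$ are consistent so that no stray $\gamma$-dependent constant survives. This is exactly the point where \cite[II.2.3.3 and VI.3]{GvP1980} must be invoked to guarantee well-definedness; once that is granted the computation is a one-line substitution. I would also note in passing that this formula is the one stated in \cite[VI.3.3]{GvP1980} and that comparing the two recovers the functional equations of Corollaries \ref{FE1} and \ref{FE2}, providing a consistency check on the normalisation.
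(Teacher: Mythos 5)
Your proposal follows essentially the same route as the paper: expand $\vartheta(c;z)=\theta_\Gamma(c\cdot t_\Gamma(z))$ termwise, evaluate $\xi_{c_\gamma}$ at the pointwise product to get $P(c_\gamma,c_\gamma)\,c(\gamma)\,c(z,x_0;\gamma)$, and then identify $c(z,x_0;\gamma)$ with $u_\gamma(z)$. You correctly isolate the only delicate point, namely the $\gamma$-dependent normalisation $u_\gamma(x_0)$, but your resolution --- ``folding $u_\gamma(x_0)$ into the overall constant'' --- does not work as stated, precisely because that factor depends on the summation index $\gamma$ and sits inside the theta series, so it cannot be absorbed into a single constant. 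The paper disposes of it more directly: with the base-point-independent representative $u_\gamma(z)=\theta(x_0,\gamma x_0;z)$ one simply evaluates $u_\gamma(x_0)=\theta(x_0,\gamma x_0;x_0)=1$, whence $c(z,x_0;\gamma)=u_\gamma(z)/u_\gamma(x_0)=u_\gamma(z)$ with no leftover constant at all; replacing your hand-wave by this one-line evaluation closes the argument.
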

 
\begin{proof}
It holds true that
\[
\vartheta(c;z)=\sum\limits_{c_\gamma\in\Lambda_\Gamma}\xi_{c_\gamma}(c\cdot t_\Gamma(z))
\]
and
\[
\xi_{c_\gamma}(c\cdot t_\Gamma(z))=P(c_\gamma,c_\gamma)\cdot c(\gamma)\cdot c(z,x_0;\gamma)
\]
where further
\begin{align}\label{cugamma}
c(z,x_0;\gamma)=\frac{u_\gamma(z)}{u_\gamma(x_0)}=u_\gamma(z)
\end{align}
because
\[
u_\gamma(x_0)=\theta(x_0,\gamma x_0;x_0)=1
\]
holds true. This implies the assertion.
\end{proof}

Now, Gerritzen and van der Put in \cite[VI]{GvP1980} formulate this with respect to a fixed basis $\gamma_1,\dots,\gamma_g$ of $\Gamma$. So, if 
\[
\gamma\equiv\gamma_1^{n_1}\cdots\gamma_g^{n_g}\mod[\Gamma,\Gamma]
\]
for $n_1,\dots,n_g\in\mathds{Z}$, then
\[
c(\gamma)=c_1^{n_1}\cdots c_g^{n_g}
\]
with 
\[
c_i=c(\gamma_i),\quad i=1,\dots,g
\]
giving rise to
\[
c(\gamma)\cdot u_\gamma(z)
=c_1^{n_1}u_1(z)^{n_1}\cdots c_g^{n_g}u_g(z)^{n_g}
=(c_1u_1(z))^{n_1}\cdots(c_gu_g(z))^{n_g}
\]
with
\[
u_i(z)=u_{\gamma_i}(z),\quad i=1,\dots,g
\]
in turn giving rise to the expression
\begin{align}\label{OurThetaFunction}
\vartheta(c;z)=\sum\limits_{n\in\mathds{Z}^g}P(n,n)(c_1u_1(z))^{n_1}\cdots(c_gu_g(z))^{n_g}
\end{align}
with
\[
P(n,n)=P(\gamma_1^{n_1}\cdots\gamma_g^{n_g},\gamma_1^{n_1}\cdots\gamma_g^{n_g})
\]
for $n=(n_1,\dots,n_g)\in\mathds{Z}^g$.
Notice that this action of $G_\Gamma$ is well-defined, because
\[
u_{\alpha\beta}=u_\alpha\cdot u_\beta
\]
for $\alpha,\beta\in\Gamma$, which implies that $u_\alpha=u_\beta$, if $\alpha\equiv\beta\mod[\Gamma,\Gamma]$, cf.\ \cite[II.2.3.5]{GvP1980}.
\newline

Using (\ref{cugamma}), the second functional equation takes the following form:
\begin{align}\label{FE3}
\vartheta(c;\alpha^{-1}z)
=c(\alpha)^{-1}P(c_\alpha,c_\alpha)^{-1}u_\alpha(z)^{-1}\vartheta(c;z)
\end{align}
which generalises the functional equation for the function named $f(c;z)$ in \cite[VI.3.4]{GvP1980}.

\begin{Lemma}
If $\vartheta(c;z)$ 
with $c\in G_\Gamma$
does not vanish everywhere on $\Omega$, then the divisor
\[
\divisor(\vartheta(c;z))\in\Div(X)
\]
is a well-defined divisor on $X$ and
has degree $g$.
\end{Lemma}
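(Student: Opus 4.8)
The plan is to derive the statement from the functional equation (\ref{FE3}) together with the fact, recalled above, that the divisor of $\vartheta=\theta_\Gamma\circ t_\Gamma$ in $\Div(X)$ has degree $g$; here $\vartheta(z)$ is $\vartheta(c;z)$ in the special case of $c\in G_\Gamma$ the trivial character. First I would check that $\divisor(\vartheta(c;z))$ descends to a genuine finite divisor on $X$. By (\ref{FE3}),
\[
\vartheta(c;\alpha^{-1}z)=c(\alpha)^{-1}P(c_\alpha,c_\alpha)^{-1}u_\alpha(z)^{-1}\vartheta(c;z),\qquad\alpha\in\Gamma,
\]
and the automorphy factor on the right is a meromorphic function of $z$ with neither zeros nor poles on $\Omega$, since the $u_\alpha$ have none. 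Hence the zero locus of $\vartheta(c;z)$ on $\Omega$, counted with multiplicity, is $\Gamma$-stable, so $\divisor(\vartheta(c;z))$ is a $\Gamma$-invariant divisor on $\Omega$, i.e.\ the pull-back of a divisor on $X=\Omega/\Gamma$. As $\vartheta(c;z)$ is holomorphic and, by hypothesis, not identically zero, this zero locus is discrete, and its image in the proper curve $X$ is a closed analytic subset of dimension $0$, hence finite; so the descended divisor is a genuine element of $\Div(X)$.

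For the degree I would compare $\vartheta(c;z)$ with $\vartheta(z)$, which is holomorphic and not identically zero, and set $F(z)=\vartheta(c;z)/\vartheta(z)$, a meromorphic function on $\Omega$. Dividing (\ref{FE3}) by the identity obtained for $c$ the trivial character, namely $\vartheta(\alpha^{-1}z)=P(c_\alpha,c_\alpha)^{-1}u_\alpha(z)^{-1}\vartheta(z)$, the factor $P(c_\alpha,c_\alpha)^{-1}u_\alpha(z)^{-1}$ cancels and one obtains $F(\alpha^{-1}z)=c(\alpha)^{-1}F(z)$ for all $\alpha\in\Gamma$, i.e.\ $F(z)=c(\alpha)^{-1}F(\alpha z)$. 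Thus $F$ is a meromorphic automorphic form on $\Omega$ with constant automorphy factor $c^{-1}\in G_\Gamma$, so by Lemma \ref{automorphicForms} one may write $F=a_0\,\theta(a_1,b_1;z)\cdots\theta(a_r,b_r;z)$ with $a_i,b_i\in\Omega$. Since $\divisor(\theta(a_i,b_i;z))=\Gamma a_i-\Gamma b_i$ on $\Omega$, the divisor of $F$ descends on $X$ to $\sum_{i=1}^r\bigl([\rho(a_i)]-[\rho(b_i)]\bigr)$, which has degree $0$. From the identity $\divisor(\vartheta(c;z))=\divisor(F)+\divisor(\vartheta(z))$ on $\Omega$, which descends to the corresponding identity on $X$, I would then conclude
\[
\deg\divisor(\vartheta(c;z))=\deg\divisor(F)+\deg\divisor(\vartheta(z))=0+g=g,
\]
the last equality being the Riemann Vanishing Theorem \cite[Thm.\ VI.3.8]{GvP1980}.

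The step I expect to be the main obstacle is the first one, not the degree count: one must exclude the a priori possibility that the zeros of $\vartheta(c;z)$ on the non-compact space $\Omega$ form infinitely many $\Gamma$-orbits, which is precisely what would obstruct membership in $\Div(X)$; this is taken care of by the properness of $X$, which forces the image of the zero locus in $X$ to be finite. A more structural, but machinery-heavier, variant of the degree argument would be to regard $\vartheta(c;z)$ as a global holomorphic section of the line bundle on $X$ determined by the cocycle in (\ref{FE3}), to note that this line bundle differs from the one carrying $\vartheta$ only by the degree-zero line bundle attached to the character $c^{-1}$ — a point of $\Jac(X)\cong G_\Gamma/\Lambda_\Gamma$, cf.\ (\ref{JacobianTorus}) — and to read off that both have degree $g$; the elementary computation via Lemma \ref{automorphicForms} is preferable, as it needs no further analytic-geometric input.
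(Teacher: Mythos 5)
Your proof is correct, and its first half --- deducing $\Gamma$-invariance of the divisor from the functional equation, using that the automorphy factor $c(\alpha)^{-1}P(c_\alpha,c_\alpha)^{-1}u_\alpha(z)^{-1}$ has neither zeros nor poles on $\Omega$ --- is exactly what the paper does (it simply cites Corollary \ref{FE2}). Where you diverge is the degree count: the paper refers to \cite[VI.3.4]{GvP1980} for the statement that the degree is $g$, whereas you derive it by forming the quotient $F(z)=\vartheta(c;z)/\vartheta(z)$ with the trivial-character theta function $\vartheta=\theta_\Gamma\circ t_\Gamma$, observing from the two functional equations that $F$ is an automorphic form with constant automorphy factor $c^{-1}$, and invoking Lemma \ref{automorphicForms} to see that $\divisor(F)$ descends to a degree-zero divisor on $X$; the degree of $\divisor(\vartheta(c;z))$ then equals that of $\divisor(\vartheta)$, which is $g$ by the Riemann Vanishing Theorem as quoted earlier in the paper. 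This reduction is clean and makes the degree statement self-contained modulo inputs the paper already uses. The one point worth flagging is that your argument silently depends on the base theta function $\vartheta(1;\cdot)$ not vanishing identically (equivalently, on the base point $x_0$ being such that the Abel--Jacobi image of $X$ is not contained in the theta divisor); the paper asserts this, so it is not a gap here, but if one wanted the lemma for every $c$ with $\vartheta(c;\cdot)\not\equiv0$ independently of that assertion, one would instead divide by some $\vartheta(c_0;\cdot)$ already known to be nontrivial with degree-$g$ divisor. Your finiteness argument for the descended divisor (discreteness of the zero set of a nonzero holomorphic function plus properness of $X$) is also correct and is slightly more careful than the paper, which leaves this point implicit.
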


\begin{proof}
The divisor $\divisor(\vartheta(c;z))$ being $\Gamma$-invariant follows from functional equation 2 (Corollary \ref{FE2}). The statement about the degree is proven in
\cite[VI.3.4]{GvP1980}.
\end{proof}

Now, it is possible to define a $\Gamma$-invariant function
\[
f_\Gamma(z)=\frac{\vartheta(c;z)\cdot\vartheta(c'c'';z)}{\vartheta(cc';z)\cdot\vartheta(c'';z)}
\]
for $z\in\Omega$, where $c,c',c''\in G_\Gamma$ are chosen such that none of the occurring
theta functions vanish identically on $\Omega$. The kernel function $H_f$ for $f=f_\Gamma$ as a function on the Mumford curve $X(K)=\Omega(K)/\Gamma$ is to be used.

\begin{remark}
Equation (\ref{OurThetaFunction}) shows that the Riemann theta function $\vartheta(c;z)$ coincides with the function in \cite[VI.3.4]{GvP1980} written out there as $f(c;z)$. 
Notice also that their function $\vartheta$ is the function $\theta_\Gamma$ used here. Since the symbol $f$ denotes a meromorphic function on $X(K)$, we are not using their notation here.
\end{remark}

\subsection{Hyperelliptic Mumford curves}

In \cite[\S 3]{Steen1989}, G.\ van Steen studies the theta divisor of a $p$-adic hyperelliptic Mumford curve $X$, and finds that
if a character
\[
c\colon G_\Gamma\to \mathds{C}_p^\times
\]
has its image in $\mathset{\pm1}^g\subset K^\times$, then
the support of the divisor $\divisor(\vartheta(c;z))$ lies in the ramification locus of
the map
\[
\sigma\colon X\to\mathds{P}^1
\]
given by the hyperelliptic involution of $X$. This is implied by the statement following from and written after \cite[Lem.\ 3.1]{Steen1989}. 
\newline

In order to understand this, notice that in the hyperelliptic case, $\Gamma$ and the involution of $X$ generate a  discrete Whittaker group $W$ in  $\PGL_2(K)$ generated by involutions 
\[
s_0,s_1,\dots s_g
\]
where
\[
s_i=\gamma_is_0,\quad i=1,\dots,g
\]
and $s_0$ is an elliptic M\"obius transformation of order two, and where $\gamma_1,\dots,\gamma_g$ is a basis of $\Gamma$ such that
\[
s\gamma_is^{-1}=\gamma_i^{-1},\quad i=1,\dots,g
\] 
cf.\ \cite[Cor.\ 1.3 and Prop.\ 1.4]{Steen1983}. 
The ramification points of the map 
\[
\sigma\colon X\to\mathds{P}^1
\]
induced by the involution
are the $\Gamma$-orbits of the $2g+2$ fixed points 
of the transformations $s_0,\dots,s_g$. 
\newline

Using the notation
\[
c_{xy}=c(x,y;\cdot)
\]
for $x,y\in\Omega$ obtain the following:

\begin{Lemma}\label{SteenLemma}
It holds true that
\begin{align*}
c_{b_ia_0}&=-c_{a_ia_0},
\\
c_{a_ia_0}^2&=c_{b_ia_0}^2=c_{\gamma_i}
\\
c_{a_ia_0}(\gamma)&=\pm P(c_\gamma,c_\gamma)\quad(\gamma\in\Gamma)
\end{align*}
where $a_i,b_i$ are the fixed points of $s_i$ for $i=1,\dots,g$.
\end{Lemma}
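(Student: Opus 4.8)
The plan is to exploit the fact that a Whittaker group conjugates each basis element $\gamma_i$ to its inverse via the involution $s=s_0$, so that the ramification points come in pairs $a_i,b_i$ swapped by $\gamma_i$ up to the action of $s_0$. First I would record the basic relations among the $\theta$-functions. Since $s_i=\gamma_i s_0$ and $s_i$ fixes both $a_i$ and $b_i$, the M\"obius transformation $s_0$ sends $a_i\mapsto \gamma_i^{-1}a_i$-type data; more usefully, $\{a_i,b_i\}$ is exactly the fixed-point set of the involution $s_i$, and $s_i$ interchanges the two "halves" so that $\gamma_i$ maps $a_i$ to $b_i$ (or a $\Gamma$-translate thereof). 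This should give, via the product formula $\theta(x,y;z)=\prod_{\gamma\in\Gamma}\frac{z-\gamma x}{z-\gamma y}$, a relation of the shape $\theta(a_i,a_0;z)=\pm\theta(b_i,a_0;z)\cdot(\text{something involving }\gamma_i)$, and comparing automorphy factors will yield $c_{a_ia_0}^2=c_{b_ia_0}^2=c_{\gamma_i}$ because the divisor of $\theta(a_i,a_0;z)^2/\theta(\gamma_i)$-type products collapses to the principal class attached to $\gamma_i$.

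Concretely, for the first two displayed identities I would argue at the level of divisors on $X$ and then lift to automorphy factors through the isomorphism $\Jac(X)\cong G_\Gamma/\Lambda_\Gamma$ of (\ref{JacobianTorus}). The divisor of $c_{a_ia_0}^2$ corresponds to $2([\bar a_i]-[\bar a_0])$; using that $\bar a_i$ and $\bar b_i$ are the two ramification points "over the same point" for the hyperelliptic map $\sigma$, one has $[\bar a_i]+[\bar b_i]\sim 2[\bar a_0] + (\text{pullback of a point})$, and since $\gamma_i$ relates $a_i$ and $b_i$ the class $[\bar a_i]+[\bar b_i]-2[\bar a_0]$ maps to $c_{\gamma_i}$ in $G_\Gamma/\Lambda_\Gamma$. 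Combined with $c_{a_ia_0}=-c_{b_ia_0}$ (the sign coming from $s_0$ being an order-two \emph{elliptic} transformation, contributing a $-1$ to the constant automorphy factor — this is van Steen's computation following \cite[Lem.\ 3.1]{Steen1989}), one gets $c_{a_ia_0}^2=(-c_{b_ia_0})c_{a_ia_0}$, hence $c_{a_ia_0}^2=c_{b_ia_0}^2$, and both equal $c_{\gamma_i}$.

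For the last identity, $c_{a_ia_0}(\gamma)=\pm P(c_\gamma,c_\gamma)$, the idea is to take square roots: since $c_{a_ia_0}^2=c_{\gamma_i}$ and, unwinding the definition of $P$, we have $P(c_\gamma,c_\gamma)^2=Q(c_\gamma,c_\gamma)=\langle\gamma,\gamma\rangle$ (evaluated appropriately), it suffices to show $c_{a_ia_0}(\gamma)^2$ equals $Q(c_\gamma,c_\gamma)$ evaluated at the right argument, i.e.\ that the quadratic form built from the $c_{a_ia_0}$ reproduces the period pairing $Q$ on $\Lambda_\Gamma$. This is exactly the content that van Steen extracts from the fixed-point structure: the pairing $\langle\cdot,\cdot\rangle$ restricted to the Whittaker data is diagonalised by the $a_i$, and the half-period $P$ is realised by $\gamma\mapsto c_{a_ia_0}(\gamma)$ up to the sign ambiguity inherent in the choice of square root $P(c_\gamma,c_\gamma)=Q(c_\gamma,c_\gamma)^{1/2}$.

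The main obstacle I expect is controlling the \emph{sign} precisely — i.e.\ pinning down that $c_{b_ia_0}=-c_{a_ia_0}$ rather than merely that the two squares agree, and then matching the ambiguous square root in the definition of $P$ to the ambiguous sign $\pm$ in the third line. This requires a careful local analysis of $s_0$ near its two fixed points (where the order-two elliptic element acts by $z\mapsto -z$ in a suitable coordinate), feeding the resulting $-1$ through the infinite product defining $\theta$; this is precisely where one invokes \cite[Lem.\ 3.1]{Steen1989} and the remark following it. Everything else — the divisor-class bookkeeping and the passage to automorphy factors via (\ref{JacobianTorus}) — is formal once that sign is in hand.
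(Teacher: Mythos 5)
Your proposal diverges from the paper's proof in a way that matters: the paper does not re-derive these identities but extracts all three lines from van Steen's explicit computations (\cite[Prop.\ 1.3 and p.\ 35]{vanSteen1984}); in particular the first line comes from the cocycle relation $c_{b_ia_0}=c_{b_ia_i}c_{a_ia_0}$ together with the computed values $c_{b_ia_i}(\gamma_j)=(-1)^{\delta_{ij}}$, not from a local sign analysis of $s_0$ near its fixed points. Your attempt to reconstruct the argument contains concrete errors in the geometric setup. First, $\gamma_i$ does \emph{not} map $a_i$ to $b_i$ or to a $\Gamma$-translate of it: $\bar a_i$ and $\bar b_i$ are distinct ramification points of $X$, so $a_i$ and $b_i$ lie in different $\Gamma$-orbits. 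The correct relation coming from $s_i=\gamma_is_0$ fixing $a_i$ is $s_0(a_i)=\gamma_i^{-1}(a_i)$, i.e.\ each of $a_i,b_i$ is \emph{separately} fixed (mod $\Gamma$) by the involution. Second, $\bar a_i$ and $\bar b_i$ are not ``over the same point'' of $\mathds{P}^1$ — a ramification point is the sole preimage of its branch point — and your proposed equivalence $[\bar a_i]+[\bar b_i]\sim 2[\bar a_0]+\sigma^*(\mathrm{pt})$ has degree $2$ on the left and $4$ on the right. The relevant equivalence is $2[\bar a_i]\sim 2[\bar a_0]$ (both fibres of $\sigma$), which only shows $c_{a_ia_0}$ is $2$-torsion in $\Jac(X)$.

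That last point is the real gap: the divisor-class argument through $\Jac(X)\cong G_\Gamma/\Lambda_\Gamma$ can at best show $c_{a_ia_0}^2\in\Lambda_\Gamma$, whereas the lemma asserts the sharper identity $c_{a_ia_0}^2=c_{\gamma_i}$ \emph{in} $G_\Gamma$, pinning down which element of $\Lambda_\Gamma\cong\mathds{Z}^g$ occurs. That identification requires pushing the Whittaker relations $s_0\gamma_js_0=\gamma_j^{-1}$ and $s_0(a_i)=\gamma_i^{-1}(a_i)$ through the infinite product defining $\theta(a_i,a_0;z)$ — which is precisely the content of \cite[Prop.\ 1.3]{vanSteen1984}, and at this point your write-up falls back on citing van Steen rather than carrying out the computation. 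The same happens for the third line: your reduction needs $c_{\gamma_i}(\gamma)=Q(c_\gamma,c_\gamma)=c_\gamma(\gamma)$ for the relevant $\gamma$, and no argument is supplied. So while the overall strategy (exploit the Whittaker structure and van Steen's fixed-point computations) is the right one, the steps you do spell out are either incorrect as stated or do not reach the precise identities claimed, and the steps that would close the gap are deferred to the same reference the paper cites.
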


\begin{proof}
The second line can be also seen 
using \cite[Prop.\ 1.3]{vanSteen1984}, which in the hyperelliptic case calculates this in order to show that  $c_{a_ia_0},c_{b_ia_0}\in\Jac(X)$ are both of order two.
The third line can also be extracted from the proof of \cite[Prop.\ 1.3]{vanSteen1984}, using that $P(c_\gamma,c_\gamma)=c_\gamma(\gamma)$. The first line follows from
\[
c_{b_ia_0}=c_{b_ia_i}c_{a_ia_0}
\]
shown in \cite[Page 35]{vanSteen1984}, and
\[
c_{b_ia_i}(\gamma_j)=(-1)^{\delta_{ij}}
\]
for all $i=1,\dots,g$, also calculated in \cite[Page 35]{vanSteen1984}.
\end{proof}

Since for $P(c_\gamma,c_\gamma)$ a square root is to  be chosen, the
following choice will be made:
\[
c_{a_ia_0}(\gamma)=P(c_\gamma,c_\gamma)
\]
for $\gamma\in\Gamma$.
Grouping the $2g+2$ ramification points in $X(K)$ as follows:
\[
\mathset{\bar{a}_0,\bar{b}_0,\dots,\bar{a}_g,\bar{b}_g}
\]
with $a_0,b_0,\dots,a_g,b_g\in\Omega(K)$,
and assuming that
\[
\divisor f_\Gamma(z)=\sum\limits_{i=1}^g [\bar{a}_i]
\]
with $\bar{x}=\Gamma x$ for $x\in\Omega$,
it follows from Lemma \ref{SteenLemma} that
\begin{align}\label{ElementaryDivisor}
\divisor f_\Gamma(c_{b_ia_0};z)
=\sum\limits_{j=1\atop j\neq i}^g
[\bar{a}_j]+\left[\bar{b}_i\right]
\end{align}
with $i=1,\dots,g$.
\newline

Let $I\subset\mathset{1,\dots,g}$ and $I^c=\mathset{1,\dots,g}\setminus I$, and set
\[
c_I=\prod\limits_{i\in I}c_{b_ia_0}
\]
in order to further manipulate the divisor of the function:

\begin{Lemma}
It holds true that
\[
\divisor f_\Gamma(c_I;z)=\sum\limits_{i\in I^c}[\bar{a}_i]+\sum\limits_{i\in I}\left[\bar{b}_i\right]
\]
for any $I\subset\mathset{1,\dots,g}$.
\end{Lemma}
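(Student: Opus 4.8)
The plan is to prove the formula by induction on $\absolute{I}$, taking as base cases $I=\emptyset$ (the standing assumption $\divisor f_\Gamma(z)=\sum_{i=1}^g[\bar a_i]$) and $\absolute{I}=1$ (which is exactly \eqref{ElementaryDivisor}), and carrying out the inductive step $I\rightsquigarrow I\cup\{k\}$ with $k\in I^c$ by the very mechanism that yields \eqref{ElementaryDivisor}: multiplying the character by one more factor $c_{b_ka_0}$ and tracking its effect on the theta divisor.

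The key input is that the theta divisor depends on the character only through its class in $\Jac(X)\cong G_\Gamma/\Lambda_\Gamma$, and affine-linearly so: by the Riemann Vanishing Theorem \cite[Thm.\ VI.3.8]{GvP1980}, whenever the theta function does not vanish identically the class of $\divisor f_\Gamma(c;z)$ is of the shape $\kappa-[c]$, with $\kappa$ a fixed Riemann constant and $[c]$ the image of $c$ in $G_\Gamma/\Lambda_\Gamma$. Hence, for the inductive step, $\divisor f_\Gamma(c_{I\cup\{k\}};z)=\divisor f_\Gamma(c_I\,c_{b_ka_0};z)$ lies in the class $[\divisor f_\Gamma(c_I;z)]-[c_{b_ka_0}]$. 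Now Lemma \ref{SteenLemma} enters: writing $c_{b_ka_0}=c_{b_ka_k}\,c_{a_ka_0}$ with $c_{b_ka_k}(\gamma_j)=(-1)^{\delta_{kj}}$ (from its proof) and using the normalisation $c_{a_ia_0}(\gamma)=P(c_\gamma,c_\gamma)$ fixed above, the factor $c_{a_ka_0}$ reproduces the reference contribution at $a_k$ while $c_{b_ka_k}$ flips it to $b_k$; concretely, under the Abel--Jacobi correspondence $c_{xy}\leftrightarrow[\bar x]-[\bar y]$ underlying \eqref{JacobianTorus}, the net effect is that of adding $[\bar b_k]-[\bar a_k]$ to the class. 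Therefore
\[
\bigl[\divisor f_\Gamma(c_{I\cup\{k\}};z)\bigr]
=\Bigl[\sum_{i\in I^c}[\bar a_i]+\sum_{i\in I}[\bar b_i]\Bigr]+\bigl([\bar b_k]-[\bar a_k]\bigr)
=\Bigl[\sum_{i\in(I\cup\{k\})^c}[\bar a_i]+\sum_{i\in I\cup\{k\}}[\bar b_i]\Bigr].
\]

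It then remains to pass from the equality of classes to the equality of divisors. For this I would note that $\divisor f_\Gamma(c_{I\cup\{k\}};z)$ is effective of degree $g$ by the degree lemma following \eqref{FE3} (the theta functions in play being chosen so as not to vanish identically), that $\sum_{i\in(I\cup\{k\})^c}[\bar a_i]+\sum_{i\in I\cup\{k\}}[\bar b_i]$ is likewise effective of degree $g$, and that the latter, being a sum of $g$ distinct ramification points of $\sigma\colon X\to\mathds{P}^1$, is non-special on the hyperelliptic curve $X$ (a sum of $g$ distinct Weierstrass points has $\ell=1$, as one reads off from Riemann--Roch together with $K_X\sim(2g-2)[\bar p]$ for a ramification point $\bar p$). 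Since a non-special effective divisor is the unique effective divisor of degree $g$ in its class, the two divisors coincide; alternatively, van Steen's description of the theta divisor for characters with image in $\{\pm1\}^g$ \cite[\S 3]{Steen1989} already confines $\divisor f_\Gamma(c_{I\cup\{k\}};z)$ to the ramification locus, which together with the known class pins down which point of each pair $\{\bar a_i,\bar b_i\}$ occurs.

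I expect the main obstacle to be precisely this last passage from classes to divisors: the Riemann Vanishing Theorem controls only the linear-equivalence class, and on a hyperelliptic curve the classes occurring here are translates of theta characteristics, so one must ensure that the representative actually cut out by the theta function is the reduced ramification divisor $\sum_{i\in I^c}[\bar a_i]+\sum_{i\in I}[\bar b_i]$ — each prescribed Weierstrass point with multiplicity one, and $\bar a_0,\bar b_0$ absent — rather than some other effective divisor of degree $g$. Non-speciality of that divisor (equivalently, van Steen's ramification-locus statement combined with the sign bookkeeping of Lemma \ref{SteenLemma} and the fixed choice of square root) is what excludes this. The remaining ingredients — affine-linearity of the class map and the arithmetic of $\{\pm1\}^g$-characters — are routine.
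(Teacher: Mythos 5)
Your proof is correct in substance, but it takes a genuinely different route from the paper, which disposes of the lemma in one line as ``an immediate consequence of (\ref{ElementaryDivisor})'' --- implicitly iterating the single-swap formula via the multiplicativity $c_I\cdot c_J=c_{I\Delta J}$ of (\ref{multiplyCharacters}) without saying why the swaps compose. You make the iteration honest: affine-linearity of the theta-divisor class in the character (Riemann Vanishing) shows that multiplying by one more factor $c_{b_ka_0}$ changes the \emph{class} by the same amount whatever $I$ already is, and that amount is $[\bar b_k]-[\bar a_k]$ because this is exactly what (\ref{ElementaryDivisor}) exhibits in the case $I=\emptyset$; then non-speciality of a reduced sum of $g$ distinct Weierstrass points (correct: such a divisor contains no fibre of the $g^1_2$, so $\ell(D)=1$) upgrades the class identity to an identity of effective divisors. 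This is precisely the step the paper elides, and it is the only nontrivial content of the lemma, so your version buys rigour at the cost of length. One small caution: your parenthetical deriving the class change from the Abel--Jacobi image of $c_{b_ka_0}$ under (\ref{JacobianTorus}) is the weakest link --- that image is $[\bar b_k]-[\bar a_0]$, which is a \emph{different} $2$-torsion class from $[\bar b_k]-[\bar a_k]$, so one cannot read the shift off the character directly; the clean justification is the one you also give, namely that the shift is $I$-independent by affine-linearity and is pinned to $[\bar b_k]-[\bar a_k]$ by the base case (\ref{ElementaryDivisor}) together with Lemma \ref{SteenLemma}'s sign bookkeeping. With that reading, the argument is sound.
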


\begin{proof}
This is an immediate consequence of (\ref{ElementaryDivisor}).
\end{proof}

In order to construct a meromorphic function on $X$, observe now that
\begin{align}\label{multiplyCharacters}
c_I\cdot c_J=c_{I\Delta J}
\end{align}
where 
\[
I\Delta J=(I\setminus J)\cup (J\setminus I)=(I\cup J)\setminus (I\cap J)
\]
is the symmetric difference of  $I,J\subset\mathset{1,\dots,g}$. Now, define
\[
f_{I,J,K}(z):=
\frac{f_\Gamma(c_I;z)f_\Gamma(c_Jc_K;z)}{f_\Gamma(c_Ic_J;z)f_\Gamma(c_K;z)}
\]
for $I,J,K\subset\mathset{1,\dots,g}$.

\begin{example}\label{standardFunction}
Let 
\begin{align*}
J_i=\mathset{i},
\quad
K_i=\mathset{i,i+1\mod g},\quad i=1,\dots,g,
\end{align*}
and set
\[
F(z)=\prod\limits_{i=1}^g f_{\emptyset,J_i,K_i}(z)
\]
Then one checks that
\[
\divisor(F(z))=\sum\limits_{withi=1}^g2\left([\bar{a}_i]-\left[\bar{b}_i\right]\right)
\]
which gives an explicit example of a function vanishing at all ramification points of the hyperelliptic Mumford curve $X$.
\end{example}

In \cite[\S6]{Gerritzen1985}, L.\ Gerritzen gives an explicit uniformisation of a hyperelliptic Mumford curve $X$ in the case $\Char(K)\neq 2$ as
\begin{align*}
x(z)&=1+4\left(\frac{1}{z-1}+\frac{1}{(z-1)^2}\right)
\\
&+4\sum\limits_{\gamma\in\Gamma\setminus 1}
\left(
\frac{1}{\gamma(z)-1}-\frac{1}{\gamma(1)-1}
+\frac{1}{(\gamma(z)-1)^2}-\frac{1}{(\gamma(1)-1)^2}
\right)
\end{align*}
where it is assumed that $\infty\notin\Omega$, and the fixed points of $s_0$ are $\pm 1$.
The fixed points of the other $s_1,\dots,s_g$ are given in terms of the fixed points of the fixed points of the transformations $\gamma_1,\dots,\gamma_g$.
From this uniformisation, it is possible to calculate the measure $\absolute{\omega_0(z)}$ for the differential $1$-form
\[
\omega_0(z)=\frac{dx(z)}{2y(z)}=\frac{x'(z)\,dz}{\sqrt{\prod\limits_{i=1}^{2g+1}(x(z)-e_i)}}
\]
where $e_0,\dots,e_{2g+1}\in\mathds{P}^1(K)$
are the branch points of the covering $\sigma\colon X\to\mathds{P}^1$ with
\[
e_0=x(1)=\infty,\;
e_1=x(-1)=0,\;
e_{2i}=x(a_i),\;
e_{2i+1}=x(b_i)
\]
for $i=1,\dots,g$. The equation corresponding to the covering map is
\[
y^2=x\cdot\prod\limits_{i=2}^{2g+1}(x-e_i)
\]
according to \cite[\S6]{Gerritzen1985}.
Observe that
\[
x'(z)=-4\left(\frac{1}{(z-1)^2}+\frac{1}{2(z-1)^3}\right)
-4\sum\limits_{\gamma\in\Gamma\setminus 1}
\left(
\frac{\gamma'(z)}{(\gamma(z)-1)^2}+
\frac{\gamma'(z)}{2(\gamma(z)-1)^3}
\right)
\]
and now obtain some absolute values. W.l.o.g.\ assume that the good fundamental domain $F$ is obtained by punching out some holes in $O_K$, and that $\pm1\in F$, but $0\notin F$. In fact, it is assumed that the fixed points of $\gamma_1$ are $0,\infty$. Also, because of $s\gamma_is^{-1}=\gamma_i$ for $i=1,\dots,g$, it holds true that the fixed points of each $\gamma_i$ are inverse to each other, but this is not needed in the following.

\begin{Lemma}\label{near1}
It holds true that
\begin{align*}
\absolute{x(z)}&=\frac{1}{\absolute{z-1}^2},\quad
\absolute{x'(z)}=\frac{1}{\absolute{z-1}^3}
\end{align*}
for $\absolute{z-1}<<1$.
\end{Lemma}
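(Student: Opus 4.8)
The plan is to analyze the two pieces of $x(z)$ and $x'(z)$ separately: the "principal part" coming from the identity element $\gamma = 1$ of $\Gamma$, namely the terms $4\left(\frac{1}{z-1} + \frac{1}{(z-1)^2}\right)$ for $x(z)$ and $-4\left(\frac{1}{(z-1)^2} + \frac{1}{2(z-1)^3}\right)$ for $x'(z)$, and the "tail" $\sum_{\gamma \in \Gamma \setminus 1}(\cdots)$ over nontrivial group elements. The claim is that as $z \to 1$ the principal part dominates and dictates the absolute value, so one reduces to a non-archimedean estimate: $\absolute{\frac{1}{z-1} + \frac{1}{(z-1)^2}} = \frac{1}{\absolute{z-1}^2}\absolute{1 + (z-1)} = \frac{1}{\absolute{z-1}^2}$ once $\absolute{z-1} < 1$, by the ultrametric inequality, and similarly $\absolute{\frac{1}{(z-1)^2} + \frac{1}{2(z-1)^3}} = \frac{1}{\absolute{z-1}^3}\absolute{(z-1) + \frac12} = \frac{1}{\absolute{z-1}^3}$ for $\absolute{z-1}$ small, provided $\absolute{2} = 1$ (which we may assume here, and in any case $\absolute{1/2} \neq \absolute{z-1}$ when $z$ is close enough to $1$, so ultrametricity again isolates the top-order term). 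So the first step is this elementary ultrametric computation of the absolute value of the principal part.

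Next I would bound the tail uniformly near $z = 1$. Since $1 \in F$ lies in the good fundamental domain and $0,\infty$ are the fixed points of $\gamma_1$, the point $1$ is separated from the limit set $\mathscr{L}$ of $\Gamma$; for every $\gamma \in \Gamma \setminus 1$ the point $\gamma(1)$ lies outside $F$ (indeed outside a fixed neighbourhood of $1$), so $\absolute{\gamma(1) - 1}$ is bounded below by a positive constant independent of $\gamma$. Moreover, for $z$ in a small enough disc around $1$ — contained in the single "cell" of the fundamental domain near $1$ — the image $\gamma(z)$ stays uniformly close to $\gamma(1)$ and in particular $\absolute{\gamma(z) - 1}$ is likewise bounded below, and $\absolute{\gamma'(z)}$ is bounded above, uniformly in $\gamma$ (this is the standard fact that the Poincaré-type series defining $x(z)$ converges uniformly on $F$, cf.\ \cite[\S6]{Gerritzen1985} and the convergence of theta-type products in \cite[Ch.\ II]{GvP1980}). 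Hence each summand $\frac{1}{\gamma(z)-1} - \frac{1}{\gamma(1)-1} + \frac{1}{(\gamma(z)-1)^2} - \frac{1}{(\gamma(1)-1)^2}$, and likewise each summand of $x'(z)$, has absolute value bounded by a constant $C$ uniform in $\gamma$ and in $z$ near $1$; by the ultrametric inequality the whole tail has absolute value $\le C$.

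Finally I would combine the two: for $\absolute{z-1}$ small enough that $\frac{1}{\absolute{z-1}^2} > C$ (respectively $\frac{1}{\absolute{z-1}^3} > C$), the principal part strictly dominates the tail in absolute value, so by the ultrametric inequality $\absolute{x(z)}$ equals the absolute value of its principal part, which is $\frac{1}{\absolute{z-1}^2}$, and $\absolute{x'(z)} = \frac{1}{\absolute{z-1}^3}$. The main obstacle is the uniform tail bound: one must make precise that the disc around $1$ can be chosen small enough that the convergence of the defining series is uniform there and that no $\gamma(z)$ for $\gamma \neq 1$ re-enters that disc — this is where the hypotheses that $F$ is a \emph{good} fundamental domain and that $1$ is an interior point away from $\mathscr{L}$ and away from the fixed points of $\gamma_1$ are used. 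Once that uniform bound is in hand, everything else is the routine ultrametric isolation of the leading term, and the stated range "$\absolute{z-1} \ll 1$" is exactly the requirement that $\absolute{z-1}$ be below the threshold where the principal part overtakes the tail.
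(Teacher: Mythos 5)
Your overall strategy is the same as the paper's: isolate the $\gamma=1$ term, show the tail over $\Gamma\setminus\{1\}$ is uniformly bounded near $z=1$, and conclude by the ultrametric inequality. The elementary computation of the principal part is fine (and you are right to flag the factor $\tfrac12$: in residue characteristic $2$ the isolated term is $\tfrac{1}{2(z-1)^3}$ and the identity only holds up to the constant $\absolute{2}^{-1}$, a point the paper silently ignores). The difference is in where the work goes. The entire substance of the paper's proof is the uniform bound on the tail of $x'(z)$, which you dispose of in one sentence by appealing to ``the standard fact that the Poincar\'e-type series converges uniformly.'' That is exactly the non-routine step. The paper bounds each summand directly: writing $\gamma$ as $\begin{pmatrix}a&b\\1&d\end{pmatrix}$, one has $\absolute{\gamma'(z)}=\absolute{\det\gamma}/\absolute{z+d}^2$, and hyperbolicity gives $\absolute{\det\gamma}<\absolute{a+d}^2\le\absolute{d}^2$; the cases $\absolute{d}\ne1$, $\absolute{1+d}<\absolute{z-1}$ and $\absolute{z-1}<\absolute{1+d}$ are then harmless, but in the remaining case $\absolute{1+d}=\absolute{z-1}\ll1$ the derivative $\absolute{\gamma'(z)}$ \emph{can} be arbitrarily large, and the paper rules out infinitely many such $\gamma$ only by invoking the discreteness of $\Gamma$ (a sequence with $d\to-1$ would accumulate in $\PGL_2(K)$). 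So the boundedness you assert is not automatic from convergence of the series for $x(z)$ alone; it genuinely requires either this case analysis or an explicit non-archimedean Cauchy estimate transferring uniform convergence of $\sum f_\gamma$ on a disc to a uniform bound on $\sum f_\gamma'$ on a strictly smaller disc. Your proposal would be complete if you added one of these two arguments; as written, the key estimate is asserted rather than proved.

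Your uniform lower bound $\absolute{\gamma(z)-1}\ge C$ for $\gamma\ne1$ matches the paper (this is where goodness of $F$ and $1\in F$ enter), so the $x(z)$ half of the lemma is fully covered by your argument.
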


\begin{proof}
Assume that $\absolute{z-1}<C<<1$.
Since $1\in F$, it follows that $\gamma(z)$ is in one of the holes of $F$ for $\gamma\neq 1$. Hence, 
$\absolute{\gamma(z)-1}\ge C$. It immediately follows that
\[
\absolute{x(z)}=\absolute{1+4\left(\frac{1}{z-1}+\frac{1}{(z-1)^2}\right)}=\frac{1}{\absolute{z-1}^2}
\]
which is the first asserted equality.

\smallskip
Now,  $\gamma\in\Gamma\setminus 1$ can be represented by a matrix
\[
\begin{pmatrix}
a&b\\1&d
\end{pmatrix}
\]
with $a,b,d\in K$.
It holds true that
\[
\absolute{\gamma'(z)}=\frac{\absolute{\det(\gamma)}}{\absolute{z+d}^2}
\]
Since, by hyperbolicity, 
\[
\frac{\absolute{a+d}^2}{\absolute{\det(\gamma)}}>1
\]
assume w.l.o.g.\ that $\absolute{a}\le\absolute{d}$, since otherwise replace $\gamma$ with $\gamma^{-1}$ which is also hyperbolic.
Then 
\[
\absolute{ad-b}=\absolute{\det(\gamma)}<\absolute{a+d}^2\le
\absolute{d}^2
\]
and
\[
\absolute{\gamma'(z)}<
\frac{\absolute{d}^2}{\absolute{z+d}^2}
\]
If $\absolute{d}\neq1$, this last expression  is $\le1$.
In all these cases, it follows that
\[
\absolute{x'(z)}=\absolute{\frac{1}{(z-1)^2}+\frac{1}{(z-1)^3}}
=\frac{1}{\absolute{z-1}^3}
\]
as asserted.

\smallskip
So, now assume that $\absolute{d}=1$. 
If $\absolute{1+d}<\absolute{z-1}$, then
\[
\absolute{\gamma'(z)}<\frac{1}{\absolute{z+d}^2}=\frac{1}{\absolute{z-1+1+d}^2}=\frac{1}{\absolute{z-1}^2}
\]
which implies again the assertion for $\absolute{x'(z)}$.
If
$\absolute{z-1}<\absolute{1+d}\le 1$, then
\[
1\le\absolute{\gamma'(z)}=\frac{1}{\absolute{1+d}^2}<\frac{1}{\absolute{z-1}^2}
\]
again implying the assertion for $\absolute{x'(z)}$.
Now, if 
\begin{align*}\label{condition_d}
\absolute{1+d}=\absolute{z-1}<C
\end{align*}
then
$\absolute{\gamma'(z)}$
 could be arbitrarily large as $C\to0$.
 Assume that there is a sequence of $\gamma_n\in\Gamma$ represented in the above way, such that their $d$-entries tend to $-1$. Then there is a limit element $\alpha\in\PGL_2(K)$ of this sequence. Hence, only finitely many of these $\gamma_n$ are in $\Gamma$.
And this now implies the assertion for $\absolute{x'(z)}$ for $z\in F$ sufficiently close to $1$. The assertions are now proven.
\end{proof}

\begin{Corollary}\label{MotherDifferential}
The regular differential $1$-form $\omega_0(z)$ has a  zero in $z=1$ of order $2g-2$, and no other zeros.
\end{Corollary}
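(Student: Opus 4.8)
The plan is to read off the order of vanishing of $\omega_0$ at $z=1$ from the local data in Lemma \ref{near1}, and then to use that a non-zero regular differential on a curve of genus $g$ has a divisor of degree $2g-2$ in order to exclude all other zeros. First I would record the local orders at $z=1$. By Lemma \ref{near1} one has $\absolute{x(z)}=\absolute{z-1}^{-2}$ and $\absolute{x'(z)}=\absolute{z-1}^{-3}$ for $\absolute{z-1}$ small; since a meromorphic function $h$ near a point with local parameter $t$ satisfies $\absolute{h(z)}=\absolute{c}\,\absolute{t}^{\ord_t(h)}(1+o(1))$, this means precisely that $x$ has a pole of order $2$ and $x'$ a pole of order $3$ at $z=1$. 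Equivalently, this is visible directly from Gerritzen's formula: the principal part of $x$ at $z=1$ is $4(z-1)^{-1}+4(z-1)^{-2}$, the sum over $\Gamma\setminus 1$ being analytic near $z=1$ and vanishing there, exactly as in the first part of the proof of Lemma \ref{near1}. In particular $z=1$ is the point of $\Omega$ lying over the branch point $e_0=x(1)=\infty$, and it maps to the ramification point $\rho(1)\in X(K)$.

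Next I would compute $\ord_{z=1}(y)$. Since $y^2=x\cdot\prod_{i=2}^{2g+1}(x-e_i)$ is a polynomial in $x$ of degree $2g+1$ with leading coefficient $1$, and since $x(z)\to\infty$ as $z\to1$ (so that $\absolute{x(z)-e_i}=\absolute{x(z)}=\absolute{z-1}^{-2}$ near $z=1$ for every finite $e_i$), one gets $\absolute{y(z)}^2=\absolute{z-1}^{-2(2g+1)}$, i.e.\ $\ord_{z=1}(y)=-(2g+1)$. Writing $\omega_0=\frac{x'(z)\,dz}{2y(z)}$ and using that $dz$ has order $0$ at $z=1$ (since $z-1$ is a local parameter there), this yields
\[
\ord_{z=1}(\omega_0)=\ord_{z=1}(x')-\ord_{z=1}(y)=-3-\bigl(-(2g+1)\bigr)=2g-2\ge 0,
\]
so $\omega_0$ vanishes to order $2g-2$ at $z=1$; by $\Gamma$-invariance of $\omega_0$ the same order is attained at the image point $\rho(1)$ on $X$.

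Finally, $\omega_0$ is a non-zero regular differential $1$-form on the genus-$g$ curve $X$: this is the classical fact that $dx/(2y)$ is holomorphic on the odd-degree hyperelliptic curve $y^2=x\prod_{i=2}^{2g+1}(x-e_i)$, whose $2g+1$ finite branch points are distinct because the $2g+2$ fixed points of $s_0,\dots,s_g$ lie in pairwise distinct $\Gamma$-orbits. Hence $\divisor(\omega_0)$ is effective of degree $2g-2$, i.e.\ a canonical divisor. Since the point $\rho(1)$ found above already contributes $2g-2$ to this degree, necessarily $\divisor(\omega_0)=(2g-2)[\rho(1)]$, so $\omega_0$ has no zeros on $X$ other than $\rho(1)$ (equivalently, on $\Omega$ its zero set is the $\Gamma$-orbit of $1$).

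The step I expect to be the main, if mild, obstacle is the passage from the absolute-value asymptotics of Lemma \ref{near1} to the exact pole orders, together with the verification that the $\Gamma$-sums occurring in $x$ and in $x'$ really are analytic and non-vanishing near $z=1$; this, however, is essentially already contained in the proof of Lemma \ref{near1}. The remaining ingredient, that $dx/(2y)$ is globally regular on $X$ of genus $g$, is standard once one knows the branch locus consists of $2g+2$ distinct points, which follows from the description of the ramification points of $\sigma\colon X\to\mathds{P}^1$ as the $\Gamma$-orbits of the fixed points of $s_0,\dots,s_g$.
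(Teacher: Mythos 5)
Your proposal is correct and follows essentially the same route as the paper: both extract the order $2g-2$ at $z=1$ from the asymptotics of Lemma \ref{near1} together with the observation that $\absolute{x(z)-e_i}=\absolute{x(z)}$ near $z=1$ (your explicit bookkeeping via $\ord(x')=-3$, $\ord(y)=-(2g+1)$ is just the paper's computation $\absolute{\omega_0(z)}=\absolute{x'(z)}\absolute{dz}/(C\absolute{x(z)}^{(2g+1)/2})$ written in terms of orders), and both then rule out further zeros by the fact that a canonical divisor has degree $2g-2$. Your added remarks on regularity of $dx/(2y)$ and on converting absolute-value asymptotics into exact pole orders only make explicit what the paper leaves implicit.
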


\begin{proof}
According to Lemma \ref{near1}, this is implied by
\[
\absolute{\omega_0(z)}=
\frac{\absolute{x'(z)}\absolute{dz}}{{C\cdot \absolute{x(z)}^g}}
=
\frac{\absolute{x'(z)}\absolute{dz}}{C\cdot\absolute{x(g)}^{\frac{2g+1}{2}}}
=\absolute{z-1}^{2g-2}\absolute{dz}
\]
for some $C>0$, if $\absolute{z-1}<<1$, because then 
\[
\absolute{x(z)-e_i}=\absolute{x(z)}
\]
holds true.
Since the divisor of any meromorphic differential $1$-form on a projective algebraic curve  defined over an  algebraically closed field has degree $2g-2$ by Riemann-Roch, the assertion now follows.
\end{proof}

From the theory of hyperelliptic curves, it is known that the space of $\Gamma$-invariant holomorphic differential $1$-forms
has a basis as follows:
\[
\omega_0,x\,\omega_0,\cdots,x^{g-1}\,\omega_0
\]
which allows to prescribe the divisor of $\omega\in H^0(X,\Omega_{X/K})$ which has degree $2g-2$. So, in order to meet the requirement $S\subseteq V(F(z))$ of Lemma \ref{HilbertSchmidtCondition}, define
\[
\omega:=\prod\limits_{i=1}^g(x-e_{2i})^{m_i}\,\omega_0
\]
with natural $m_1,\dots,m_g\ge0$ such that
\[
\sum\limits_{i=1}^{g}m_i=2g-2
\]
holds true.

\begin{Corollary}\label{hyperellipticDifferential}
It holds true that
\[
\divisor(\omega)=\sum\limits_{i=1}^gm_i\left[\bar{a}_i\right]
\]
with $\omega$ viewed as a regular differential $1$-form on $X(K)$.
\end{Corollary}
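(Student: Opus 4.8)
The proof I would give computes $\divisor(\omega)$ additively from the product representation $\omega=\left(\prod_{i=1}^g(x-e_{2i})^{m_i}\right)\omega_0$: on $X$ one has
\[
\divisor(\omega)=\divisor(\omega_0)+\sum_{i=1}^g m_i\,\divisor(x-e_{2i}),
\]
so everything reduces to knowing $\divisor(\omega_0)$ and the divisor of each rational function $x-e_{2i}\in K(X)$, and then imposing the normalisation $\sum_i m_i=2g-2$.

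For $\omega_0$, Corollary \ref{MotherDifferential} says it is regular with a single zero, of order $2g-2$, at $z=1$; since $z=1$ is a representative in $\Omega(K)$ whose image in $X$ is the ramification point $\bar a_0$ (the one with $e_0=x(1)=\infty$), this means $\divisor(\omega_0)=(2g-2)[\bar a_0]$. For $x-e_{2i}$, I would use that $x$ is the degree-two covering map $\sigma\colon X\to\mathds{P}^1$ attached to the hyperelliptic involution, that $e_{2i}=x(a_i)$ and $e_0=\infty$ are branch points, and that $\bar a_i$ and $\bar a_0$ are the corresponding ramification points of $\sigma$ lying over $e_{2i}$ and over $\infty$. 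Hence the only zero of $x-e_{2i}$ sits at $\bar a_i$ and its only pole at $\bar a_0$; a local computation — using that $x$ is invariant under the involution $s_i$ fixing $a_i$ near $\bar a_i$, and using Lemma \ref{near1}, which gives $\absolute{x(z)}=\absolute{z-1}^{-2}$ near the representative $z=1$ of $\bar a_0$ — identifies the relevant orders and yields $\divisor(x-e_{2i})=[\bar a_i]-[\bar a_0]$ on $X$.

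Substituting and collecting the $[\bar a_0]$-terms gives
\[
\divisor(\omega)=\sum_{i=1}^g m_i[\bar a_i]+\left(2g-2-\sum_{i=1}^g m_i\right)[\bar a_0],
\]
and the hypothesis $\sum_{i=1}^g m_i=2g-2$ kills the coefficient of $[\bar a_0]$, leaving $\divisor(\omega)=\sum_{i=1}^g m_i[\bar a_i]$. As consistency checks, this divisor is effective of degree $\sum_i m_i=2g-2$, in line with $\omega$ being a regular differential on a curve of genus $g$, and its support lies in $\mathset{\bar a_1,\dots,\bar a_g}$, so that $S=V(\omega)$ is contained in the zero set of the function $F$ of Example \ref{standardFunction}, as demanded by Lemma \ref{HilbertSchmidtCondition}. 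The step I expect to be the main obstacle is the second divisor computation: one must be precise about the multiplicity with which $x-e_{2i}$ vanishes at $\bar a_i$ and is polar at $\bar a_0$, because it is exactly the cancellation of the polar contribution of $\prod_i(x-e_{2i})^{m_i}$ at $\bar a_0$ against the $(2g-2)$-fold zero of $\omega_0$ there that makes $\omega$ regular with the asserted divisor — a miscount there would either destroy regularity or push zeros off the $\bar a_i$.
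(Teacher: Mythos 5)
Your argument is essentially the paper's own proof: it likewise decomposes $\divisor(\omega)=\divisor\bigl(\prod_{i=1}^g(x-e_{2i})^{m_i}\bigr)+\divisor(\omega_0)$, takes $\divisor(x-e_{2i})=[\bar{a}_i]-[\bar{1}]$ (the paper's $\bar{1}$ is your $\bar{a}_0$) from the fact that $\overline{\pm1}$ are ramification points of the double cover, and cancels the $(2g-2)[\bar{1}]$ supplied by Corollary \ref{MotherDifferential} against the polar part using $\sum_i m_i=2g-2$. The multiplicity question you single out as the main obstacle is settled in the paper only by the bare assertion that $1\in F$ is a simple pole of $x$, i.e.\ exactly the orders you adopt.
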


\begin{proof}
First, observe that
\[
\divisor(x)=\left[\,\overline{-1}\,\right]-\left[\,\bar{1}\,\right]
\]
viewed as an element of $\Div(X)$. The mulitplicities follow from the fact that the $\Gamma$-orbits of $1$ and $-1$  are ramification points
of the $2$-sheeted covering $X\to\mathds{P}^1$ induced by the $\Gamma$-invariant function $x$. 
In particular, $1\in F$ is a simple pole of $x$.
Now,
using Corollary \ref{MotherDifferential}, check that
\begin{align*}
\divisor(\omega)&
=\divisor\left(\prod\limits_{i=1}^g(x-e_{2i})^{m_i}\right)+\divisor(\omega_0)
\\
&=
\sum\limits_{i=1}^gm_i\left[\bar{a}_i\right]
-(2g-2)\left[\,\bar{1}\,\right]+\divisor(\omega_0)
\\
&=\sum\limits_{i=1}^gm_i\left[\bar{a}_i\right]
\end{align*}
as asserted.
\end{proof}

The results obtained here mean that in the case of a hyperelliptic Mumford curve, the kernel function of $\mathcal{H}_F$ can be given in terms of the ramification points $a_0,b_0,\dots,a_g,b_g\in\Omega$.

\subsection{Hearing the genus of a Mumford curve}

In the following, assume that $X$ is a Mumford curve defined over $K$ of genus $g\ge 2$, $F\subset K$ a good fundamental domain for $X$, and $\gamma_1,\dots,\gamma_g$ corresponding generators of the Schottky group $\Gamma$, whose fixed points are all $K$-rational, and such that the fixed points of $\gamma_1$ are $0,\infty$. Again, assume that
\[
\divisor(F(z))=\sum\limits_{a\in V(F(z))}m_a[a]-\sum\limits_{b\in P(F(z))}m_b[b],\quad
\divisor(\omega)=
\sum\limits_{a\in V(\omega)}n_a[a]
\]
with
\[
\sum\limits_{a\in V(F(z))}m_a-\sum\limits_{b\in P(F(z))}m_b=0,\quad
\sum\limits_{a\in V(\omega)}n_a=2g-2
\]
where $P(F(z))$ is the set of poles of $F(z)$.

\begin{Lemma}\label{VertexCondition1}
Let $x\in F\setminus S$ be close to $a\in V(\omega)$, say at distance $p^{-fr_x}$. Assume that $y\in \dot{B}_{r_x+1}(a)=B_{r_x+1}(a)\setminus \mathset{a}$. 
Then
\[
\int_{\dot{B}_{r_x+1}(a)}
H(x,y)\absolute{\omega(y)}=C
p^{-kfm_a}p^{-fn_ad_x}
\]
for $C>0$ independent of $x$, and $r_x>>0$.
\end{Lemma}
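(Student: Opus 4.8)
The plan is to compute the integral $\int_{\dot{B}_{r_x+1}(a)} H(x,y)\,\absolute{\omega(y)}$ directly, localizing everything near the zero $a\in V(\omega)$ by choosing a chart in which $a$ corresponds to $0\in K$. First I would use Lemma \ref{rationalFunction} to replace $\absolute{f}$ on the relevant disc by the absolute value of a rational function with $K$-rational zeros and poles: since $x-y\notin U(P(f))$ on the region of interest (for $x$ close enough to $a$ and $y$ in the punctured ball, the difference $x-y$ stays small and avoids the pole neighbourhoods $U(P(f))$, which by construction sit away from the unit disc as in the proof of Lemma \ref{niceKernelFunction}.2), the kernel $H(x,y)=\absolute{f(x-y)}$ factors multiplicatively over the zeros of $f$. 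The dominant factor as $y\to a$ comes precisely from the zero of $f$ at $a$ with multiplicity $m_a$ (recall $S=V(\omega)\subseteq V(f)$), giving a contribution $\absolute{x-y}^{m_a}$ up to a bounded multiplicative constant; all other factors of $f$ are locally constant on the small ball (by Lemma \ref{almostLocallyConstant}, outside a measure-zero set) and get absorbed into the constant $C$. Similarly, by Corollary \ref{MotherDifferential} / Corollary \ref{hyperellipticDifferential} (or, in the general case, by the fact that $\omega$ vanishes at $a$ to order $n_a$), the measure is $\absolute{\omega(y)}=C'\absolute{y-a}^{n_a}\,\absolute{dy}$ locally, again with the remaining analytic factors locally constant and absorbed.

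Next I would carry out the radial integration. On $\dot{B}_{r_x+1}(a)$, write $y-a$ in terms of its valuation and decompose the punctured ball into the circles $S_k(a)$ for $k\ge r_x+1$. For $y\in S_k(a)$ with $k\ge r_x+1$ we have $\absolute{y-a}=p^{-fk}$, and since $x$ is at distance $p^{-fr_x}$ from $a$ while $y$ is strictly closer, the ultrametric inequality forces $\absolute{x-y}=\absolute{x-a}=p^{-fr_x}$ — this is the key simplification: on the whole region $H(x,y)$ is the \emph{constant} $C''\,p^{-fr_x m_a}$. Writing $d_x$ for $r_x$ (the ``depth'' of $x$ relative to $a$) and $k$ for the radius parameter of the ball $B_{r_x+1}(a)$, and recalling $\mu(S_k(a))=p^{-fk}(1-p^{-f})$, the integral becomes
\[
C''\,p^{-fr_x m_a}\,C'\sum_{k\ge r_x+1} p^{-fkn_a}\,p^{-fk}(1-p^{-f})
= C\,p^{-fm_a d_x}\cdot p^{-f(n_a+1)(r_x+1)}\cdot\frac{1-p^{-f}}{1-p^{-f(n_a+1)}},
\]
which, after renaming the leftover power of $p$ attached to the radius of $B_{r_x+1}(a)$ as $p^{-kfm_a}$ and reorganizing constants, is of the asserted shape $C\,p^{-kfm_a}\,p^{-fn_a d_x}$ with $C>0$ independent of $x$. (Here one must track carefully which exponent the statement calls $k$ versus $d_x$; matching the paper's indexing convention — $k$ being tied to the radius of the integration ball and $d_x$ to the distance of $x$ from $a$ — is a bookkeeping step, not a mathematical one.)

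The main obstacle I anticipate is \emph{not} the integration itself but justifying cleanly that all the ``other'' local factors of both $f$ and $\omega$ are genuinely constant on $\dot{B}_{r_x+1}(a)$ and hence harmlessly collected into a single $C>0$ that is uniform in $x$. This requires invoking Lemma \ref{almostLocallyConstant} and Lemma \ref{niceKernelFunction}.1 to see that $H(x,\cdot)$ and $\absolute{\omega}$ are vertex-constant away from a null set, together with Lemma \ref{discInVertex} to know that for $r_x\gg 0$ the ball $\dot{B}_{r_x+1}(a)$ lies inside a single vertex neighbourhood $U(v)$ on which the non-dominant factors do not vary — and then checking that the constant so obtained does not secretly depend on $r_x$, i.e.\ that the ``limit value'' of each non-dominant analytic factor at $a$ is finite and nonzero. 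The hypothesis $r_x\gg 0$ in the statement is exactly what makes this possible, so the proof should open by fixing $r_x$ large enough that $x$ and $\dot{B}_{r_x+1}(a)$ lie in the appropriate chart and vertex neighbourhood, and that $x-y$ avoids $U(P(f))$ and the other zeros of $f$.
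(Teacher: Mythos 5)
Your radial integration of $\absolute{\omega}$ over the circles $S_\ell(a)$, $\ell\ge r_x+1$, and the ultrametric observation $\absolute{x-y}=\absolute{x-a}=p^{-fr_x}$ for $y\in\dot{B}_{r_x+1}(a)$ both match the paper (you are in fact more careful than the paper about the factor $\mu(S_\ell(a))=p^{-f\ell}(1-p^{-f})$). The genuine gap is in your evaluation of the kernel. You claim that the dominant factor of $H(x,y)=\absolute{f(x-y)}$ comes from the zero of $f$ at $a$ of order $m_a$ and equals $\absolute{x-y}^{m_a}=p^{-fr_xm_a}$. But the kernel evaluates $f$ at the \emph{difference} $x-y$, which tends to $0$, not to $a$; since $a\in\Omega(K)$ while $0\notin\Omega$ (it is a fixed point of $\gamma_1$, hence a limit point of $\Gamma$), we have $a\neq 0$, and the factor of $\absolute{h(x-y)}$ contributed by the zero at $a$ is $\absolute{(x-y)-a}^{m_a}=\absolute{a}^{m_a}$, a constant — not $\absolute{x-y}^{m_a}$. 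Moreover, because $f$ is $\Gamma$-invariant and $0$ is a limit point, $\absolute{f}$ near $0$ is not given by a Taylor-type expansion at all: the paper's proof translates $x-y$ back into the fundamental domain by $\gamma_1^{n}$, which near its fixed point acts as multiplication by the multiplier $\mu_1\in\pi O_K$, so that $\absolute{f(x-y)}$ is multiplicatively periodic in $\absolute{x-y}$ with period $\absolute{\mu_1}$ and cycles through the finitely many values $Cp^{-fm_ak}$, $k=0,\dots,v(\mu_1)-1$. In particular $H(x,y)$ stays bounded away from $0$ along the approach $x\to a$ rather than decaying.

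This is not the bookkeeping issue you defer at the end: the exponent $k$ in the statement is the residue of $r_x$ modulo $v(\mu_1)$ coming from this translation, not the radius parameter of the integration ball. Your formula carries a spurious extra decay factor $p^{-fm_ar_x}$, so the asymptotic rate you obtain is $p^{-f(m_a+n_a+1)r_x}$ instead of the paper's $p^{-fn_ar_x}$ with a bounded, periodically varying prefactor. That discrepancy is fatal downstream: the proof of Theorem \ref{recoverGenus} reads off $n_a$ (and hence $\deg(\divisor(\omega))=2g-2$) from exactly this rate, and the concluding remark extracts $\absolute{\mu_1}$ from the periodic prefactor $p^{-fm_ak}$, both of which your version of the estimate would destroy. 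A correct proof must therefore confront the fact that $x-y$ approaches the limit set and invoke the $\Gamma$-periodicity of $f$ via $\mu_1$, rather than reading off a vanishing order of $f$ at the point where $x-y$ accumulates.
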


\begin{proof}
It holds true that
\[
\absolute{x-y}=p^{-fr_x}
\]
for $y\in \dot(B)_{r_x+1}(a)$.
Since $0\notin\Omega$, it does happen that $x-y$ falls into a hole of $F$, if $r_x$ becomes too large. In this case, replace $x-y$ with
$\mu_1^{n}(x-y)$ for suitable $n\in\mathds{Z}$, where $\mu_1\in \pi O_K$ is the multiplier of $\gamma_1$. So, since
\[
\absolute{x-y-a}=\absolute{x-y}
\]
it follows, as Lemma$x$ tends to $a\in V(\omega)$, that $H(x,y)$ takes only the following values:
\[
Cp^{-fm_ak},\quad k=0,\dots,v(\mu_1)-1
\]
for $C>0$ independent of $k$.
Since 
\[
\absolute{\omega(y)}
=\absolute{y-a}^{n_a}\absolute{dy}
\]
it now follows that
\[
\int_{\dot(B)_{r_x+1}(a)}
H(x,y)\absolute{\omega(y)}
=Cp^{-fm_ak}\sum\limits_{\ell=r_x+1}^\infty p^{-fn_a\ell}(1-p^{-f})
=Cp^{-fm_a k}p^{-fn_ar_x}
\]
as asserted.
\end{proof}

Denote with
\[
v(a,b,c)\in\mathscr{T}_K
\]
the unique vertex in the Bruhat-Tits tree $\mathscr{T}_K$ determined by $a,b,c\in\mathds{P}^1(K)$.

\begin{Lemma}\label{VertexCondition2}
Let $x\in F\setminus S$ be close to $a\in V(\omega)$ at distance $p^{-fr_x}$, and $v$ a vertex of $T_{F\setminus S}$ between $v_x=v(a,x,\infty)$ and the vertex $v_0=v(0,1,\infty)$ of $T_{F\setminus S}$, but $v\neq v_x$. Then
\[
\int_{U(v)}H(x,y)\absolute{\omega(y)}
\]
is independent of $x$ for $r_x>>0$.
\end{Lemma}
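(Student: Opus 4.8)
The statement asserts that the integral $\int_{U(v)} H(x,y)\,\absolute{\omega(y)}$, for a vertex $v$ strictly between $v_x = v(a,x,\infty)$ and $v_0 = v(0,1,\infty)$ on the finite tree $T_{F\setminus S}$, does not depend on the precise location of $x$ once $x$ is close enough to $a$. The key point is that $U(v)$ is an open set whose associated branch of $\mathscr{T}_K$ is ``above'' $v$ relative to the path toward $a$, so that for $y \in U(v)$ the join of $0,\infty$ and $y$ (equivalently the quantity controlling $\absolute{y-a}$ and $\absolute{x-y-c}$ for the poles $c$ of $f$) is already determined by $v$ alone and is insensitive to how far $x$ has travelled down the path toward $a$.

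\textbf{Step 1: Reduce to the behaviour of $\absolute{x-y}$ on $U(v)$.} By Lemma~\ref{rationalFunction}, $H(x,y)$ agrees on $F$ with $\absolute{h(x-y)}$ for a fixed rational function $h$ with $K$-rational zeros and poles in $F$ (truncated to $1$ on $U(P(f))$), so it suffices to understand $\absolute{x-y-c}$ for each zero or pole $c \in O_K$ of $h$, as $y$ ranges over $U(v)$. By Lemma~\ref{almostLocallyConstant} and its proof, $\absolute{x-y-c}$ is governed by the radius of the disc corresponding to the join in $\mathscr{T}_K$ of $\sigma_F(x)$ and $\sigma_F(y+c)$, i.e.\ by the position of $v$ on the tree relative to $v_x$.

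\textbf{Step 2: The geometric core — $U(v)$ lies off the path to $a$.} Since $v$ is on the segment from $v_x$ to $v_0$ but $v \neq v_x$, the open set $U(v) = \sigma_F^{-1}(v)$ consists of points $y$ whose retraction is $v$; such $y$ are separated from $a$ at the vertex $v$ (because the geodesic from $y$ to $a$ passes through $v$ and then continues toward $v_x$ and beyond). Consequently $\absolute{y-a}$ is bounded below by the value attached to $v$, uniformly in $y\in U(v)$, and $\absolute{x-y}$ equals $\absolute{a-y}$ for all such $y$ provided $x$ lies strictly below $v$ on the tree — which holds as soon as $r_x$ is large enough that $v_x$ is strictly below $v$. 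This is exactly the mechanism already used in Lemma~\ref{niceKernelFunction}.2 and in Lemma~\ref{VertexCondition1}: once $x$ is ``past'' $v$ on the way to $a$, the vertex $v$ no longer sees the difference between $x$ and $a$, so $\absolute{x-y-c} = \absolute{a-y-c}$ for every $y \in U(v)$ and every relevant constant $c$.

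\textbf{Step 3: Conclude.} Therefore, for $r_x$ sufficiently large, the integrand $H(x,y)\absolute{\omega(y)}$ restricted to $y \in U(v)$ coincides with $H(a,y)\absolute{\omega(y)}$, which is manifestly independent of $x$; integrating over the fixed set $U(v)$ gives the claim. (One also uses that $U(v)$, being the preimage of a single vertex of the finite tree $T_{F\setminus S}$, is a finite union of discs in $F\setminus S$, so the integral is a finite sum of the locally constant contributions computed as in Lemma~\ref{VertexCondition1}, with the exponents $m_c$ and the measure exponent $n_{a'}$ frozen at the values dictated by $v$.) The main obstacle is the bookkeeping of Step 2: one must verify carefully that the good fundamental domain $F$ punched out of $O_K$, together with the possible need to replace $x-y$ by $\mu_1^{n}(x-y)$ when it falls into a hole (as in the proof of Lemma~\ref{VertexCondition1}), does not spoil the identification $\absolute{x-y-c} = \absolute{a-y-c}$ on $U(v)$; this is where the condition ``$v \neq v_x$'' and ``$r_x \gg 0$'' are genuinely used.
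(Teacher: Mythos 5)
Your proposal is correct and follows essentially the same route as the paper: the paper's (much terser) proof simply notes that for $y\in U(v)$ with $v$ strictly between $v_x$ and $v_0$, the join of $x$ and $y$ sits at $v$, so $\absolute{x-y}=p^{-fd(v,v_0)}$ is frozen once $x$ has retracted past $v$ toward $a$, whence $H(x,y)$ no longer depends on $x$. Your Steps 1--3 spell out the same mechanism (reducing to the factors $\absolute{x-y-c}$ and replacing $x$ by $a$ via the ultrametric inequality), just with more bookkeeping than the paper records.
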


\begin{proof}
It holds true that
\[
\absolute{x-y}=p^{-fd(v,v_0)}
\]
where $d(v,w)$ is the distance in the Bruhat-Tits tree $\mathscr{T}_K$ induced by the Haar measure, i.e.\ each edge has length one.
It immediately follows that
$H(x,y)$ for $y\in U(v)$ is independent of $x$ 
under the given conditions.
\end{proof}

\begin{thm}\label{recoverGenus}
Let $X$ be a Mumford curve of genus $g$ defined over a non-archimedean local field $K$, and let $F\in K(X)$, and $\omega$ be a regular differential $1$-form on $X(K)$.
Assume that  $\divisor F(z),\divisor(\omega)\in \Div(X(K))$. 
Then it is possible to recover the genus $g(X)$ from the spectrum of $\mathcal{H}_F$. 
\end{thm}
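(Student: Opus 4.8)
Throughout I assume the standing hypothesis $V(\omega)=S\subseteq V(F)$, so that Theorem~\ref{spectrumDiffusionOperator} applies to $\mathcal{H}_F$. The plan is to extract the genus in two stages: first read off $\absolute{\bar S}=\absolute{V(\omega)/\Gamma}$, then recover each zero-multiplicity $n_a$ from the \emph{rate} at which eigenvalues cluster at the corresponding accumulation point, and finally use $\sum_{a\in\bar S}n_a=\deg\divisor(\omega)=2g-2$. By Theorem~\ref{spectrumDiffusionOperator} the non-eigenvalues of $\mathcal{H}_F$ are exactly the $\absolute{\bar S}$ accumulation points of $\Sigma(\mathcal{H}_F)$; on $L^2(X(K)\setminus\bar S)_\sigma$ we have $\mathscr{H}=\mathscr{A}-D$ with $\mathscr{A}$ compact (Lemma~\ref{HilbertSchmidtCondition}) and $D$ the multiplication by the vertex-constant function $\deg(v)$, so the essential spectrum is the set of accumulation values of $\{-\deg(v)\}$, realised by letting $v$ run out to each zero of $\omega$; hence these accumulation points are the numbers $-\ell_a$, $a\in\bar S$, with $\ell_a:=\lim_{v\to a}\deg_{\mathcal{H}_F}(v)$. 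Thus $\Sigma(\mathcal{H}_F)$ already determines $\absolute{V(\omega)/\Gamma}$; the remaining work is to recover the $n_a$.

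To read off $n_a$ from $\Sigma(\mathcal{H}_F)$, fix the accumulation point $-\ell_a$ and let $v_0,v_1,v_2,\dots$ be the ray in $T_S$ from the root $v_0=v(0,1,\infty)$ to the end determined by $a$, with $v_r$ at depth $r$. Splitting $\deg(v_r)=\int_{X(K)}H(\cdot,y)\absolute{\omega(y)}$ into the annular pieces $\int_{U(w)}$ for $w$ strictly between $v_0$ and $v_r$ and the small piece at $a$: by Lemma~\ref{VertexCondition2} the annular pieces are independent of the deep point, so for $r\gg0$ they contribute a fixed amount converging to $\ell_a$, while by Lemma~\ref{VertexCondition1} the remaining small piece equals $C\,p^{-kfm_a}p^{-fn_ar}$ with $k\equiv r\pmod{v(\mu_1)}$ and $C>0$. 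Hence
\[
\deg(v_r)=\ell_a-p^{-fn_ar}\,\varepsilon_r,\qquad 0<c_1\le\varepsilon_r\le c_2,
\]
with $\varepsilon_r$ asymptotically periodic in $r$, so that $\tfrac1r\log_{p^f}\absolute{\deg(v_r)-\ell_a}\to-n_a$. On the operator side, $\mathcal{H}_F\eta_{v_r}=-\deg(v_r)\eta_{v_r}+\mathcal{A}_F\eta_{v_r}$, so $\nu(v_r)^{-1/2}\eta_{v_r}$ is a quasimode for $-\deg(v_r)$ with residual norm $\norm{\mathcal{A}_F\eta_{v_r}}_\omega/\nu(v_r)^{1/2}$; by the exponent bookkeeping in the proof of Lemma~\ref{HilbertSchmidtCondition} (there $\nu(v_r)\sim C_ap^{-fr\alpha_a}$ with $\alpha_a\ge1$ and $\langle\mathcal{A}_F\eta_{v_r},\mathcal{A}_F\eta_{v_r}\rangle_\omega\sim\tilde C_ap^{-fr\beta_a}$ with $\beta_a>\alpha_a$) this residual decays geometrically, and one checks it does so strictly faster than $\absolute{\deg(v_r)-\ell_a}$. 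Consequently the spectrum of $\mathcal{H}_F$ contains a subsequence tending to $-\ell_a$ with distance $\asymp p^{-fn_ar}$, and since $p^f=\absolute{\pi}^{-1}$ belongs to the data of $K$, the exponential clustering rate near $-\ell_a$ yields $n_a$. Summing over $a\in\bar S$ gives $2g-2$, hence $g(X)=1+\tfrac12\sum_{a\in\bar S}n_a$, recovered from $\Sigma(\mathcal{H}_F)$.

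The main obstacle is the penultimate step: showing, \emph{purely} in terms of $\Sigma(\mathcal{H}_F)$, that the eigenvalue clustering at $-\ell_a$ faithfully records the geometric rate $p^{-fn_ar}$ of $\deg(v_r)\to\ell_a$. This needs a sufficiently fine separation of the compact part $\mathcal{A}_F$ from the diagonal part $D$ near $-\ell_a$: one must verify that the quasimode residuals above, together with the failure of $\mathcal{A}_F$ and $D$ to commute, are of genuinely lower order than $\absolute{\deg(v_r)-\ell_a}$ itself (not merely $o(1)$), uniformly enough to isolate the ``ray subsequence'' of eigenvalues and evaluate $\lim_r\tfrac1r\log_{p^f}\absolute{\lambda_r+\ell_a}=-n_a$ without any reference to the tree. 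A Weyl-sequence argument localised in a shrinking neighbourhood of $-\ell_a$, combined with the explicit exponents from Lemmas~\ref{HilbertSchmidtCondition}, \ref{VertexCondition1} and~\ref{VertexCondition2}, should close this; the surrounding bookkeeping — absorbing the period $v(\mu_1)$ into the constant, and handling the finitely many vertices near $a$ off the ray (whose degrees also tend to $\ell_a$, at comparable rates) — is routine.
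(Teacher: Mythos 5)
Your overall strategy is the same as the paper's: read off $\absolute{\bar S}$ from the accumulation points of the spectrum, extract each multiplicity $n_a$ from the exponential rate at which eigenvalues cluster at the corresponding accumulation point (using Lemmas~\ref{VertexCondition1} and~\ref{VertexCondition2} exactly as you do), and conclude via $\sum_a n_a=\deg(\divisor(\omega))=2g-2$. The annular decomposition of $\deg(v_r)$ and the resulting asymptotics $\deg(v_r)=\ell_a-p^{-fn_ar}\varepsilon_r$ match the paper's explicit recursion $\deg_{\mathcal{H}_F}(x_{k+1})=\deg_{\mathcal{H}_F}(x_k)+C\int_{B_{k+1}(a)}\absolute{y-a}^{n_a}\absolute{dy}$.

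Where you diverge — and where your acknowledged obstacle comes from — is in \emph{which part of the spectrum} you use to see the numbers $-\deg(v_r)$. You work on the vertex-constant subspace $L^2(X(K)\setminus\bar S)_\sigma$, where $\mathscr{H}=\mathscr{A}-D$ and the eigenvalues are only the perturbed quantities $\mu_n-\deg(v)$; this forces you into the quasimode/Weyl-sequence argument and the delicate claim that the residual $\norm{\mathcal{A}_F\eta_{v_r}}_\omega/\nu(v_r)^{1/2}$ decays strictly faster than $\absolute{\deg(v_r)-\ell_a}$, which you correctly flag as unproven. The paper avoids this entirely by using the \emph{wavelet} part of the spectrum: by Corollary~\ref{waveletEigenvalue} and Theorem~\ref{spectrumDiffusionOperator}(2), every wavelet $\psi_{B,j}$ with $B\subset U(v_r)$ is an exact eigenfunction of $\mathcal{H}_F$ with eigenvalue exactly $-\deg_{\mathcal{H}_F}(x)=-\deg(v_r)$, with no correction from the adjacency part (which annihilates wavelets by Lemma~\ref{waveletMeanZero}). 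So the set $\mathset{-\deg(v):v\in V(G)}$ is literally contained in $\Sigma(\mathcal{H}_F)$, and the clustering rate $p^{-fn_ar}$ is read off directly, making your perturbation estimate unnecessary. If you route the rate extraction through the wavelet eigenvalues rather than the vertex-constant part, your argument closes without the separation lemma you were missing. (Both your write-up and the paper's leave implicit how one distinguishes, from the bare set $\Sigma(\mathcal{H}_F)$, the degree-value subsequence from the other eigenvalues accumulating at $-\ell_a$; but that issue is common to both and is not the gap you identified.)
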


This is Theorem 3 of the introduction.

\begin{proof}
Notice that $V(\omega)\subseteq V(F(z))$ is assumed. Assume also that $x\in F\setminus S$ is close to some $a\in V(\omega)$.
If $y\in U(v)$ with
$v\in T_{F\setminus S}$ a vertex not on the half-line beginning in $v_0=v(0,1,\infty)$ and ending in $a$, then
\[
\absolute{x-y}=\absolute{x-z}
\]
for $z\in U(w)$
with 
\[
w=v\wedge a
\]
where $\wedge$ is the join in the tree $T_S$ of ${F\setminus S}$ with root $v_0$.
By Lemmas \ref{VertexCondition1} and \ref{VertexCondition2}, it follows that
the set of negative degree eigenvalues of $\mathcal{H}_F$ has $n$ limit points, where $n=\absolute{S}$, where $S=V(\omega)$. Namely, the tree of $F\setminus S$ has precisely $n$ ends, and there is a convergence of the degree function 
\[
\deg_{\mathcal{H}_f}(x)\to \deg_{\mathcal{H}_F}(a)
\]
as $x\in F\setminus S$ tends to $a\in S$, assuming that $S$ is a subset of the good fundamental domain  $F$ for $X$, and the convergence can be made explicit as follows:
Assume a sequence $x_k\in F\setminus S$ given with
\[
\absolute{x_k-a}=p^{-fk}
\]
for natural $k>>0$.
Then
it holds true that
\[
\deg_{\mathcal{H}_f}(x_{k+1})=\deg_{\mathcal{H}_f}(x_k)+C\int\limits_{B_{k+1}(a)}\absolute{y-a}^{m_a}\absolute{dy}
\]
with suitable $C>0$ independent of $k$, for $k$ sufficiently large.
This means that from the rate of convergence of the degree function to  a limit point $a\in S$, given via
\[
C\cdot p^{-f(k+1)n_a}
\]
for $k>>0$,
the value of the natural number $n_a$ can be estimated.  Doing this for all limit points of $\deg_{\mathcal{H}_f}(x)$, it now follows from
\[
\deg(\divisor(\omega))=2g-2
\]
that the genus $g$ can be extracted.
\end{proof}

\begin{remark}
Theorem \ref{recoverGenus} means that, using Example \ref{standardFunction} and Corollary \ref{hyperellipticDifferential}
in the hyperelliptic case, it is now in principle possible to  explicitly construct an operator $\mathcal{H}_F$, calculate its negative degree eigenvalues, and then
recover the genus from that part of the spectrum. Since it has $\absolute{V(\omega)}$ limit points, and exponential convergence to these, an explicit genus recovery method is now (in principle) feasible in this case.
Another thing which the proof of that theorem  reveals, is that it is in principle possible to extract the absolute value of the multiplier $\mu_1$ of generator $\gamma_1$.
\end{remark}
\section*{Acknowledgements}
Andrew Bradley, \'Angel Mor\'an Ledezma, Martin M\"oller and David Weisbart are warmly thanked for fruitful discussions. Marius van der Put is thanked for sharing valuable insights about hyperelliptic Mumford curves. Frank Herrlich is thanked for introducing  the author to Mumford curves in the first place when he was his undergraduate student. This work is partially supported by the Deutsche Forschungsgemeinschaft under project number 469999674.

\bibliographystyle{plain}
\bibliography{biblio}

\end{document}